\newtheorem{thm}{Theorem}[section]
\newtheorem{lemma}[thm]{Lemma}
\newtheorem{example}[thm]{Example}
\newtheorem{definition}[thm]{Definition}
\newtheorem{remark}[thm]{Remark}
\newtheorem{prop}[thm]{Proposition}
\title[The determinant of the second additive compound of a matrix]{The determinant of the second additive compound of a square matrix: a formula and applications}
\author {Murad Banaji}
\begin{document}

\begin{abstract}
A formula is presented for the determinant of the second additive compound of a square matrix in terms of coefficients of its characteristic polynomial. This formula can be used to make claims about the eigenvalues of polynomial matrices, with sign patterns as an important special case. A number of corollaries and applications of this formula are given.

\vspace{0.5cm}
\noindent
{\bf MSC.} 15A75, 05E40, 14P10, 90C22

\end{abstract}

\keywords{additive compound matrices, polynomial matrices, Positivstellensatz}

\maketitle

\section{Introduction and motivation}

\subsection{A general problem} Let $X = (X_1, \ldots, X_k)$ and let $M\in \mathbb{R}[X]^{n \times n}$, namely, $M$ is an $n \times n$ matrix whose entries are real polynomials in $X_1, \ldots, X_k$. Let $\mathcal{X} \subseteq \mathbb{R}^k$ be a semialgebraic set: a subset of $\mathbb{R}^k$ defined by polynomial equations and inequalities. Problems involving the eigenvalues of $M(x)$ as $x$ varies over $\mathcal{X}$ arise naturally in the study of differential equation models with network structure such as chemical reaction networks (CRNs), gene networks, or systems of interacting populations. Consider, for example, the claim:
\begin{quote}
(SNS2)\hspace{0.5cm} $M(x)$ has no pair of eigenvalues which sum to zero for any $x \in \mathcal{X}$.
\end{quote}
A certificate for SNS2 ensures, in particular, that $M(x)$ has no nonzero imaginary eigenvalues, ruling out so-called Hopf bifurcations \cite{Wiggins}. Indeed, it was questions about necessary conditions for Hopf bifurcation which motivated this work (see \cite{Guckenheimer1997aa, abphopf} for example). As is well known, SNS2 is a problem about polynomial positivity. In order to clarify this, first we fix some terminology.  

Let $p \in\mathbb{R}[X]$. If $p>0$ or $p<0$ on $\mathcal{X} \subseteq \mathbb{R}^k$, we say that $p$ is {\em sign-definite} on $\mathcal{X}$; if $p \geq 0$ or $p \leq 0$ on $\mathcal{X}$, it is {\em sign-semidefinite} on $\mathcal{X}$; otherwise, $p$ is {\em sign-indefinite} on $\mathcal{X}$ and we denote this by $p \gtrless 0$ on $\mathcal{X}$. In this paper, unless stated otherwise, $\mathcal{X}$ is taken to be the positive orthant, namely $\mathbb{R}^k_{\gg 0} := \{x \in \mathbb{R}^k\colon x_i > 0\,\,\mbox{for}\,\,i=1, \ldots, k\}$. So, for example, writing ``$p >0$'' or that ``$p$ is positive'' without further qualification means that it is positive for positive values of its variables (and not necessarily globally positive). We refer to a polynomial with both positive and negative terms as having {\em mixed terms}. Note that if $p\gtrless0$ on the positive orthant, then $p$ has mixed terms, but the converse does not hold. 

SNS2 is a claim about polynomial positivity because $M$ satisfies SNS2 if and only if $\mathrm{det}\,M^{[2]}$, the determinant of the second additive compound of $M$ (defined below), is sign-definite on $\mathcal{X}$. Given a particular $M$, we might directly examine $\mathrm{det}\,M^{[2]}$, and hope to use results such as the Positivstellensatz of Krivine and Stengle (see \cite{BCR,Parrilo03}, for example) to show that $\mathrm{det}\,M^{[2]}$ is sign-definite on $\mathcal{X}$. 

However, even in relatively low dimensions (e.g., $n=5$), and with entries in $M$ being linear forms, we encounter cases where $\mathrm{det}\,M^{[2]}$ is sign-definite or sign-semidefinite but showing this directly is quite nontrivial (some examples are presented later). One approach is to consider polynomial changes of variables. In particular, letting $p:=\mathrm{det}\,M^{[2]}$, can we find polynomials $Y:=(Y_1(X), \ldots, Y_m(X))$, and a new polynomial function $q$ defined via $p(X) = q(Y(X))$, such that the problem of determining the sign of $q$ on $Y(\mathcal{X})$ is easier than the original problem of determining the sign of $p$ on $\mathcal{X}$? The best choice for $Y$ may depend on the problem details, but one canonical choice is to set $Y_i$ ($i=1,\ldots, n$) to be the sum of the $i \times i$ principal minors of $M$, termed the $i$th {\em minor-sum} of $M$. Equivalently, the $Y_i$ are, upto sign, the coefficients of the characteristic polynomial of $M$. That such a change of variables is possible follows from basic arguments about symmetric polynomials, and writing down a general formula for $\mathrm{det}\,M^{[2]}$ in terms of these new variables is straightforward (Section~\ref{secadcomp}). This formula, already implicit in \cite{Guckenheimer1997aa}, proves particularly useful when studying sign patterns, which are now introduced.

\subsection{Sign patterns} An $n \times n$ sign pattern \cite{brualdi}, or an ``$n$-pattern'' for short, can be regarded, equivalently, as an $n \times n$ polynomial matrix in positive variables, a (convex) set of real $n \times n$ matrices, or an edge-weighted digraph on $n$ vertices. For example, given
\[
A=\left(\begin{array}{rr}1&-1\\1&0\end{array}\right),
\]
the sign pattern of $A$ is associated with the three objects:
\begin{enumerate}
\item The polynomial matrix $P_A:=\displaystyle{\left(\begin{array}{cc}X_1&-X_2\\X_3&0\end{array}\right)}$ with $X_1, X_2, X_3$ assumed real and positive.
\item The set of matrices $\mathcal{Q}_A:=\left\{B \in \mathbb{R}^{2 \times 2}\colon B_{11}>0, B_{12}<0, B_{21}>0, B_{22}=0\right\}$ termed the {\em qualitative class} of $A$. 
\item The {\em signed digraph} $G_A:=$ 
\begin{tikzpicture}[scale=1.2, baseline=-1mm]
\fill[color=black] (0,0) circle (1.5pt);
\fill[color=black] (1,0) circle (1.5pt);

%% Basic backwards loop
\draw [-, thick] (-0.05,0.05) .. controls (-0.25,0.25) and (-0.4,0.1) .. (-0.4,0);
\draw [-, thick] (-0.05,-0.05) .. controls (-0.25,-0.25) and (-0.4,-0.1) .. (-0.4,0);
\draw [->, dashed, thick] (0.1,0.05) .. controls (0.4,0.15) and (0.6,0.15) .. (0.9,0.05);
\draw [->, thick] (0.9,-0.05) .. controls (0.6,-0.15) and (0.4,-0.15) .. (0.1,-0.05);
\node at (0,-0.2) {$\scriptstyle{1}$};
\node at (1,-0.2) {$\scriptstyle{2}$};
\end{tikzpicture} Each qualitative class $\mathcal{Q}_A$ clearly includes a unique $(-1,0,1)$ matrix, say $A'$, and $G_A$ is the signed digraph with adjacency matrix $A'$. We draw positive arcs with continuous lines and negative arcs with dashed lines. 
\end{enumerate}

The default meaning of ``sign pattern'' here is the first one: a polynomial matrix with positive variables. Given either a fixed matrix or a sign pattern $M$, the associated qualitative class and signed digraph will be denoted $\mathcal{Q}_M$ and $G_M$ respectively. Consider the following conditions on a real square matrix $A$:
\begin{itemize}
\item[(i)] $\mathrm{det}\,P_A$ is sign-definite. A matrix satisfying this condition is termed ``sign-nonsingular'', and characterising sign-nonsingular matrices is the classic sign-nonsigularity problem (SNS). 
\item[(ii)] $\mathrm{det}\,P_A^{[2]}$ is sign-definite. This is just SNS2 specialised to a sign pattern. 
\end{itemize}
This superficial similarity between (i) and (ii) is worthy of further comment. SNS is {\em algebraically} uninteresting in the following sense: $\mathrm{det}\,P_A$ is either sign-definite, sign-indefinite or identically zero, being sign-indefinite if and only if it has mixed terms. These facts are well-known and follow from easy observations about Newton Polytopes (see Lemma~\ref{lemsgnindef} and Remark~\ref{remSNS}). However, SNS is still {\em combinatorially} interesting: it is natural to search for a combinatorial characterisation of SNS matrices in terms of their signed digraphs or related structures. There are a variety of interesting results in this direction (e.g., \cite{Thomassen1,SeymourThomassen,vazirani}), culminating in the surprising graph-theoretic result that deciding if a matrix is SNS is in {\bf P} \cite{RobertsonSeymourThomas}.

Attempting any similar combinatorial work on $\mathrm{det}\,P_A^{[2]}$, we face the immediate hurdle that problems involving $\mathrm{det}\,P_A^{[2]}$ are {\em not} algebraically trivial. If $A$ is an $n \times n$ matrix with $n \geq 4$, then $\mathrm{det}\,P_A^{[2]}$ may be nonzero and sign-semidefinite but not sign-definite, or may be sign-definite while having mixed terms (Examples~\ref{exnz}~and~\ref{exmixed} below). Worse still, it is easy to find examples with $n=5$ where $-\mathrm{det}\,P_A^{[2]} \geq 0$, but $-\mathrm{det}\,P_A^{[2]}$ does not belong to the smallest cone in $\mathbb{R}[X]$ containing the monomials $\{X_1, \ldots, X_k\}$ (see Example~\ref{exharder} and the discussion in Section~\ref{secpsatz} below for the meaning and significance of this fact). Thus even preliminiary computational exploration of the sign of $\mathrm{det}\,P_A^{[2]}$ requires some nontrivial machinery involving positivity of polynomials.

\section{The second additive compound of a square matrix}
\label{secadcomp}

In this section, we briefly describe additive compounds and present the basic formula for the determinant of a second additive compound which will be used subsequently.

Given an $n$-dimensional vector space $V$ over some field $K$, let $\Lambda^2 V$ be the second exterior power of $V$, which can be identified with the ${n \choose 2}$-dimensional $K$-vector space consisting of the antisymmetric elements of $V \otimes V$. $\Lambda^2V$ is the set of finite linear combinations of elements of the form $u_1 \wedge u_2$ where $u_i \in V$, and ``$\wedge$'' is the anticommutative or ``wedge''-product, namely an associative, distributive, product satisfying $u \wedge v = -v \wedge u$ and $\lambda u \wedge v = u \wedge \lambda v = \lambda(u \wedge v)$ for all $u,v \in V$ and scalars $\lambda$. The elements of $\Lambda^2 V$ are termed {\em bivectors}. 

Consider a linear map $L\colon V \to V$ with eigenvalues $\{\lambda_1, \lambda_2, \ldots, \lambda_n\}$. $L$ induces linear maps on $\Lambda^2 V$ in two important ways:
\begin{enumerate}
\item The second exterior power (or multiplicative compound) of $L$, denoted $L^{(2)}$ is defined via $L^{(2)}(u \wedge v) = Lu \wedge Lv$, and extends to all bivectors by linearity. Its eigenvalues are precisely the products of pairs of eigenvalues of $L$, namely the multiset $\{\lambda_i\lambda_j\,|\, i < j\}$. 
\item The second additive compound of $L$, denoted $L^{[2]}$ is defined via $L^{[2]}(u \wedge v) = Lu \wedge v + v \wedge Lv$, and again extends to all bivectors by linearity. Its eigenvalues are precisely the {\em sums} of pairs of eigenvalues of $L$ counted with multiplicity, namely the multiset $\{\lambda_i+\lambda_j\,|\, i < j\}$. 
\end{enumerate}
Higher multiplicative and additive compounds can also be defined naturally. The reader is referred to \cite{allenbridges} for an introduction which focusses on applications to differential equations. Here our interest is in the second additive compound $L^{[2]}$, precisely because of its spectral properties.  

Any basis $\mathcal{B}$ of $V$ naturally induces a basis $\mathcal{B}'$ on $\Lambda^2 V$ consisting of wedge products of distinct pairs of vectors in $\mathcal{B}$. A fixed ordering on the elements of $\mathcal{B}$ can be used to fix an ordering on the elements of $\mathcal{B}'$ (the most common choice being the lexicographic ordering), and so a matrix representation of $L$, say $M$, gives rise to a corresponding matrix representation of $L^{[2]}$, say $M^{[2]}$, the {\em second additive compound matrix} of $M$. The nonzero entries of $M^{[2]}$ are simple linear forms in the entries of $M$; an explicit formula is given in \cite{muldowney}. Since our interest here is solely in $\mathrm{det}\,M^{[2]}$, $\mathcal{B}$ is arbitrary.

Define $\mathrm{det}^{[2]}$ to be the map which takes a square matrix to the determinant of its second additive compound, namely $\mathrm{det}^{[2]}\,M = \mathrm{det}(M^{[2]})$. Let $n \geq 2$ and let $\mathbb{C}^{n \times n}$ denote the $n \times n$ complex matrices. Define $\mathrm{det}^{[2]}_n \colon \mathbb{C}^{n \times n} \to \mathbb{C}$ to be the restriction of $\mathrm{det}^{[2]}$ to $\mathbb{C}^{n \times n}$. The spectrum of $M \in \mathbb{C}^{n \times n}$ can be regarded as 
%a multiset of size $n$ in $\mathbb{C}$, or equivalently as 
a point in the quotient space $\mathbb{C}^n/S_n$ where $S_n$ is the symmetric group with the natural action on $\mathbb{C}^n$. Functions on $\mathbb{C}^n/S_n$ are just symmetric functions on $\mathbb{C}$, and in particular, polynomial functions on $\mathbb{C}^n/S_n$ are those represented by symmetric polynomials in $n$ indeterminates. For each $n \geq 2$ define the maps:
\begin{enumerate}
\item $\mathrm{spec}_n \colon \mathbb{C}^{n \times n} \to \mathbb{C}^n/S_n$ by $\mathrm{spec}_n(M) = (\lambda_1, \ldots, \lambda_n)$ where $\lambda_1, \ldots, \lambda_n$ are the eigenvalues of $M$. Namely, $\mathrm{spec}_n(\cdot)$ takes a matrix to its spectrum.
\item $\mathrm{char}_n\colon \mathbb{C}^{n \times n} \to \mathbb{C}^n$ by $\mathrm{char}_n(M) = (J_1, \ldots, J_n)$, where $J_k$ is the $k$th minor-sum of $M$. As the $J_i$ are, upto sign, the non-leading coefficients of the characteristic polynomial of $M$, we can think of $\mathrm{char}_n(\cdot)$ as taking a matrix to its characteristic polynomial. 
\item $e_n\colon \mathbb{C}^n/S_n \to \mathbb{C}^n$ by $e_n(\lambda_1, \ldots, \lambda_n) = (J_1, \ldots, J_n)$, where $J_k$ is the elementary symmetric polynomial of degree $k$ in $\lambda_1, \ldots, \lambda_n$. Namely, $e_n$ can be regarded as the function taking the spectrum of a matrix to the coefficients of its characteristic polynomial.
\item $p_n\colon \mathbb{C}^n/S_n \to \mathbb{C}$ by $p_n(\lambda_1, \ldots, \lambda_n)=\prod_{i < j}(\lambda_i+\lambda_j)$. Namely, $p_n(\cdot)$ takes the spectrum of a matrix to the determinant of its second additive compound. (Note that $p_n$ is a symmetric polynomial function, so this makes sense.)
\end{enumerate}
Consider the following diagram:

\begin{center}
\begin{tikzpicture}[domain=0:4,scale=0.8]

\node at (-2,4) {$\mathcal{X}$};
\node at (1,4) {$\mathbb{C}^{n \times n}$};
\node at (4,4) {$\mathbb{C}^n/S_n$};
\node at (1,1) {$\mathbb{C}^n$};
\node at (4,1) {$\mathbb{C}$};

\draw[->, thick] (1.6,4.0) -- (3.2, 4.0);
\draw[->, thick] (-1.6,4.0) -- (0.2, 4.0);
\draw[->, thick] (1.4,1.0) -- (3.6, 1.0);

\draw[->, thick] (3.6,3.6) -- (1.4, 1.4);

\draw[->, thick] (1.0, 3.6) -- (1.0, 1.4);
\draw[->, thick] (4.0, 3.6) -- (4.0, 1.4);

\node at (0.4, 2.5) {$\mathrm{char}_n$};
\node at (2.5, 4.4) {$\mathrm{spec}_n$};
\node at (-0.5, 4.4) {$M$};

\node at (4.4, 2.5) {$p_n$};
\node at (2.5, 0.6) {$q_n$};

\node at (2.3, 2.7) {$e_n$};
\end{tikzpicture}
\end{center} 
$\mathcal{X}$ is some set and the arrow from $\mathcal{X}$ assigns to each $x \in \mathcal{X}$ an $n \times n$ complex matrix, say $M(x)$. The functions $p_n$, $\mathrm{spec}_n$, $\mathrm{char}_n$ and $e_n$ are described above. Commutativity of the upper left triangle is of course well-known, and allows us to abuse notation by writing $J_i(x)$ when referring to $(\mathrm{char}_n\circ M)_i(x)$, or $J_i(\lambda_1, \ldots, \lambda_n)$ when referring to $(e_n(\lambda_1, \ldots, \lambda_n))_i$, depending on context. 

The goal is to write down a polynomial function $q_n$ which makes the above diagram commute, namely to find $q_n\colon \mathbb{C}^n \to \mathbb{C}$ such that $\mathrm{det}^{[2]}_n = q_n\circ \mathrm{char}_n$. Existence of $q_n$ follows from the Fundamental Theorem of Symmetric Polynomials (\cite{blumsmith}, for example), which tells us that there is a unique, polynomial function $q_n$ which makes the lower right triangle (and hence the whole diagram) commute. Thus, for each $n \geq 2$, we define $q_n$ via $q_n\circ e_n = p_n$, or more explicitly:
\[
q_n(J_1(\lambda_1, \ldots, \lambda_n), J_2(\lambda_1, \ldots, \lambda_n), \ldots, J_n(\lambda_1, \ldots, \lambda_n)) = p_n(\lambda_1, \ldots, \lambda_n)
\]
For brevity, let $J = (J_1, \ldots, J_n)$ and $\lambda = (\lambda_1, \ldots, \lambda_n)$, in which case, the defining equation of $q_n$ becomes tidier:
\begin{equation}
\label{qneq}
q_n(J(\lambda)) = p_n(\lambda)\,.
\end{equation}
\begin{lemma}
\label{lemirred}
$q_n$, as defined by (\ref{qneq}), has degree $n-1$, and is irreducible as an element of $\mathbb{C}[J]$.
\end{lemma}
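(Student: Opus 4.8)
The plan is to treat the two assertions separately, handling irreducibility through the Fundamental Theorem of Symmetric Polynomials (already invoked in the construction of $q_n$) and the degree through a single well-chosen substitution.

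For irreducibility, I would exploit that the correspondence $J_k \mapsto e_k(\lambda)$ is a ring isomorphism $\mathbb{C}[J] \xrightarrow{\sim} \mathbb{C}[\lambda_1,\ldots,\lambda_n]^{S_n}$ carrying $q_n$ to $p_n = \prod_{i<j}(\lambda_i+\lambda_j)$. Since ring isomorphisms preserve irreducibility, it suffices to show that $p_n$ is irreducible in the invariant ring $\mathbb{C}[\lambda]^{S_n}$, i.e.\ that $p_n$ admits no factorization into two non-constant \emph{symmetric} polynomials. Now $\mathbb{C}[\lambda]$ is a UFD and $p_n$ is a product of the $\binom{n}{2}$ pairwise non-associate linear irreducibles $\lambda_i+\lambda_j$, each occurring once; hence any divisor of $p_n$ is, up to a scalar, the product over some subset $S$ of these factors. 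If such a divisor is symmetric, then applying $\sigma \in S_n$ (which permutes the factors via $\sigma\cdot(\lambda_i+\lambda_j) = \lambda_{\sigma(i)}+\lambda_{\sigma(j)}$) and comparing unique factorizations forces $\sigma(S)=S$ for all $\sigma$. Because $S_n$ acts transitively on the set of unordered pairs $\{i,j\}$, the only invariant subsets are $\emptyset$ and the full set, so the only symmetric divisors of $p_n$ are constants and scalar multiples of $p_n$. This yields irreducibility.

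For the degree, I would substitute $\lambda_n = w$ while keeping $\lambda_1,\ldots,\lambda_{n-1} =: \mu$ as indeterminates, and read off the degree in $w$. On the right, $p_n$ factors as $p_{n-1}(\mu)\prod_{i=1}^{n-1}(w+\mu_i)$, a polynomial of degree exactly $n-1$ in $w$. On the left, each argument becomes affine in $w$, namely $J_k(w) = e_{k-1}(\mu)\,w + e_k(\mu)$ (with $e_0=1$), so $q_n(J(w))$ has degree in $w$ at most the total degree $d := \deg q_n$, giving immediately $d \ge n-1$. To obtain the matching upper bound I would check that the coefficient of $w^{d}$ does not vanish: collecting top-order terms shows it equals $\bar q_n(1,e_1(\mu),\ldots,e_{n-1}(\mu))$, where $\bar q_n$ denotes the top-degree form of $q_n$. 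Since $\bar q_n$ is homogeneous, setting $J_1=1$ relabels its monomials bijectively and so yields a nonzero polynomial in $J_2,\ldots,J_n$, which stays nonzero after substituting the algebraically independent $e_1(\mu),\ldots,e_{n-1}(\mu)$ for $J_2,\ldots,J_n$. Hence $\deg_w q_n(J(w)) = d$ exactly, and comparison with the right-hand side forces $d = n-1$.

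The main obstacle in each half is the same in spirit: ruling out cancellation. For irreducibility the content is entirely in the transitivity of the $S_n$-action on pairs, which must be paired carefully with the uniqueness of factorization and the distinctness (non-associateness) of the linear factors, so that ``symmetric divisor'' really does mean ``invariant subset of factors''. For the degree the delicate point is verifying that the leading $w$-coefficient survives; this is where the homogeneity of $\bar q_n$ and the algebraic independence of $e_1(\mu),\ldots,e_{n-1}(\mu)$ do the essential work. I expect this upper bound on the degree to be the most error-prone step, since it is tempting to conclude $d=n-1$ from the lower bound alone without confirming non-cancellation of the top-order term.
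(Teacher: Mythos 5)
Your proposal is correct. The irreducibility half is essentially the paper's argument in different clothing: the paper writes $q_n=q'q''$, pulls back to $p_n=p'p''$ with $p',p''$ symmetric, notes $\lambda_1+\lambda_2$ divides one factor, and uses symmetry plus transitivity of $S_n$ on pairs to force all $\binom{n}{2}$ linear forms (hence $p_n$ itself) into that factor; your ``invariant subset of factors'' phrasing is the same mechanism, made slightly more explicit about the role of unique factorization and the non-associateness of the $\lambda_i+\lambda_j$. The degree half is where you genuinely diverge. The paper gets the lower bound from the monomial $\lambda_1^{n-1}\lambda_2^{n-2}\cdots\lambda_{n-1}^1$ (which occurs with coefficient $1$ in $p_n$) and the upper bound from the terse observation that $\lambda_1^n$ divides no monomial of $p_n$ --- the latter implicitly invoking the lex-leading-term bookkeeping from the standard proof of the Fundamental Theorem of Symmetric Polynomials. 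Your specialization $\lambda_n=w$, with each $J_k$ affine in $w$ and the $w^d$-coefficient identified as the top-degree form evaluated at $(1,e_1,\ldots,e_{n-1})$, delivers both bounds in one stroke and is more explicit about the non-cancellation that the paper's upper bound leaves to the reader; your care on that point (injectivity of setting $J_1=1$ on monomials of a homogeneous form, algebraic independence of the $e_i$ in $n-1$ variables) is exactly right. The one thing the paper's route buys that yours does not is a byproduct: the coefficient-$1$ occurrence of $\lambda_1^{n-1}\lambda_2^{n-2}\cdots\lambda_{n-1}^1$ established here is reused in the proof of Proposition~\ref{J2formula} to fix the normalization constant between $q_n$ and the determinant $\mathrm{det}\,\mathcal{M}_{n-1}$, so if one adopted your degree argument wholesale, that monomial computation would still have to be done separately later.
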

\begin{proof}
Recall that $p_n = \prod_{1\leq i < j\leq n}(\lambda_i + \lambda_j)$. The monomial $m:=\lambda_1^{n-1}\lambda_2^{n-2}\cdots \lambda_{n-1}^1$ occurs (with coefficient $1$) in $p_n$, while clearly $\lambda_1^n$ does not divide any monomial of $p_n$. Each monomial of $q_n$ is a product $\prod_{i=1}^rs_i$, where each $s_i$ is a nonconstant elementary symmetric polynomial in $\lambda$. But: (i) $\lambda_1^n$ does not divide any monomial of $p_n$ and so $q_n$ has degree $\leq n-1$; (ii) $m$ occurs in $p_n$ and so $q_n$ must have degree $\geq n-1$. Thus $q_n$ must have degree $n-1$.

Let $q_n = q'q''$, so that $p_n = p'p''$, where $p'(\lambda):=q'(J(\lambda))$ and $p''(\lambda):=q''(J(\lambda))$. Since the irreducible polynomial $\lambda_1+\lambda_2$ divides $p_n$, it divides one of $p'$ or $p''$; w.l.o.g. let this be $p'$. But then, as $p'$ is symmetric, $\lambda_i + \lambda_j$ divides $p'$ for every pair $(i,j)$, $1\leq i < j \leq n$. Consequently, the product of these factors, namely $p_n$ itself, divides $p'$. Thus $p''=q''$ is a nonzero constant, and hence $q_n$ is irreducible over $\mathbb{C}$.
\end{proof}

\begin{prop} 
\label{J2formula}
Let $M \in \mathbb{R}[X]^{n \times n}$ ($n \geq 2$), and let $J_i \in \mathbb{R}[X]$ ($i = 1, \ldots, n$), be the $i$th minor-sum of $M$. Define $J_i := 0$ for $i > n$. Then $q_n(J_1, \ldots, J_n) = \mathrm{det}^{[2]}M$ is the leading $(n-1) \times (n-1)$ principal minor of 
\begin{equation}
\label{J2eqn}
\mathcal{M} := \left(\begin{array}{ccccc}J_1&1&0&0&\hdots\\J_3&J_2&J_1&1&\hdots\\J_5&J_4&J_3&J_2&\hdots\\J_7&J_6&J_5&J_4&\hdots\\\vdots&\vdots&\vdots&\vdots&\ddots\end{array}\right)\,.
\end{equation}

\end{prop}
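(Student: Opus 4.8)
The plan is to reduce the statement to a single symmetric-function identity and then prove that identity by the classical route of divisibility, degree, and leading coefficient. Write $e_k=e_k(\lambda)$ for the $k$th elementary symmetric polynomial (so $e_0=1$ and $e_k=0$ for $k<0$ or $k>n$), and let $D=\det(e_{2i-j})_{1\le i,j\le n-1}$ be the leading $(n-1)\times(n-1)$ minor of $\mathcal{M}$ after the substitution $J_k=e_k$; this matches (\ref{J2eqn}) entrywise, including the explicit $0$'s and $1$'s. Since this minor involves only $J_1,\dots,J_n$ (higher-indexed entries vanish by convention) and since $e_1,\dots,e_n$ are algebraically independent, proving that $q_n$ equals this minor as an element of $\mathbb{C}[J_1,\dots,J_n]$ is equivalent, after substituting $J_k=e_k$, to proving the identity $D=p_n(\lambda)=\prod_{1\le i<j\le n}(\lambda_i+\lambda_j)$; here I use that $q_n\circ e_n=p_n$ by (\ref{qneq}). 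Everything thus reduces to this one identity.

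For divisibility, I would fix a pair $a\neq b$ and show that $D$ vanishes when $\lambda_b=-\lambda_a$. As $D$ is symmetric in $\lambda$ and the linear forms $\lambda_i+\lambda_j$ are pairwise coprime, this gives $\prod_{i<j}(\lambda_i+\lambda_j)\mid D$. Setting $\lambda_a=a$, $\lambda_b=-a$, the generating function factors as $\sum_k e_k s^k=\prod_i(1+\lambda_i s)=(1-a^2s^2)\sum_k r_k s^k$, where $r_k$ is the $k$th elementary symmetric polynomial in the remaining $n-2$ variables, so $r_k=0$ for $k>n-2$. Comparing coefficients gives $e_k=r_k-a^2r_{k-2}$, so the $i$th row of $\mathcal{M}$ equals $\tilde\rho_i-a^2\tilde\rho_{i-1}$ with $\tilde\rho_i=(r_{2i-1},r_{2i-2},\dots,r_{2i-(n-1)})$ and $\tilde\rho_0=0$. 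Hence the row space of $\mathcal{M}$ lies in $\mathrm{span}(\tilde\rho_1,\dots,\tilde\rho_{n-1})$; but every index occurring in $\tilde\rho_{n-1}$ is at least $n-1>n-2$, so $\tilde\rho_{n-1}=0$ and this span has dimension at most $n-2$. Therefore $D=0$. I expect this rank collapse to be the crux of the whole argument.

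Finally I would fix the proportionality constant. Expanding $D=\sum_{\sigma\in S_{n-1}}\mathrm{sgn}(\sigma)\prod_i e_{2i-\sigma(i)}$, every nonvanishing term is homogeneous of degree $\sum_i(2i-\sigma(i))=\binom{n}{2}$, because $\sum_i\sigma(i)=\sum_i i$ for any permutation; thus $D$ is homogeneous of degree $\binom{n}{2}=\deg p_n$ and $D=c\,p_n$ for a scalar $c$. To pin down $c$ I would compare the coefficient of the staircase monomial $\lambda_1^{n-1}\lambda_2^{n-2}\cdots\lambda_{n-1}^1$, the lexicographically leading monomial of $p_n$, which occurs there with coefficient $1$ exactly as in the proof of Lemma~\ref{lemirred}. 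Since the leading monomial of $e_k$ is $\lambda_1\cdots\lambda_k$, greedily maximising the exponents of $\lambda_1,\lambda_2,\dots$ in turn forces $\sigma(1)=1$, then $\sigma(2)=2$, and so on, so $\sigma=\mathrm{id}$ is the unique permutation producing the lex-largest monomial, namely this staircase monomial with coefficient $+1$. Hence $c=1$ and $D=p_n$. It is worth remarking that, up to transposition and conjugation by a diagonal matrix of signs, $\mathcal{M}$ is the Hurwitz matrix of the characteristic polynomial of $M$, so this identity is a form of Orlando's classical formula; the self-contained argument above could be replaced by a citation to it, at the cost of tracking the sign $(-1)^{\binom{n}{2}}$ relating $D$ to the $(n-1)$th Hurwitz determinant.
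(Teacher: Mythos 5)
Your proof is correct, and it takes a genuinely different route from the paper's. The paper identifies $\mathcal{M}_{n-1}$ as (a row and column permutation of) the Sylvester matrix of the even and odd parts $\hat P,\hat Q$ of the characteristic polynomial, shows that $\hat P,\hat Q$ have a common root exactly when a pair of eigenvalues sums to zero, concludes that $\det\mathcal{M}_{n-1}$ and $q_n$ have the same zero set, and then needs the irreducibility of $q_n$ (Lemma~\ref{lemirred}) together with the Nullstellensatz to upgrade ``same zero set'' to ``equal up to a constant''. You instead prove the divisibility $\prod_{i<j}(\lambda_i+\lambda_j)\mid D$ directly, by exhibiting an explicit rank collapse of the matrix when $\lambda_b=-\lambda_a$ (your generating-function identity $e_k=r_k-a^2r_{k-2}$ plus the vanishing of $\tilde\rho_0$ and $\tilde\rho_{n-1}$ is, in effect, the concrete linear-algebra proof that a resultant vanishes at a common root), and you replace the irreducibility-plus-Nullstellensatz step by the observation that $D$ is homogeneous of degree $\binom{n}{2}=\deg p_n$. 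Both arguments pin down the constant via the staircase monomial $\lambda_1^{n-1}\lambda_2^{n-2}\cdots\lambda_{n-1}^1$, the paper by induction on the determinant expansion, you by a lexicographic greedy argument; your version is sound because for $\sigma\neq\mathrm{id}$ the lex-leading monomial of $\prod_i e_{2i-\sigma(i)}$ is lex-smaller than the staircase, so that monomial cannot occur in such a product at all. What your route buys is self-containedness: it avoids the Nullstellensatz entirely and uses only the staircase observation from the proof of Lemma~\ref{lemirred}, not the irreducibility statement itself. What the paper's route buys is the explicit resultant interpretation, which both motivates the shape of $\mathcal{M}$ and carries the spectral meaning (common roots of $\hat P,\hat Q$ correspond to eigenvalue pairs summing to zero) that drives the rest of the paper. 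Your closing remark that, up to signs, this is Orlando's formula for the Hurwitz determinant of the characteristic polynomial is accurate and worth recording.
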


\begin{remark}
The proof of Proposition~\ref{J2formula} is essentially contained in \cite{Guckenheimer1997aa}, but is presented for completeness. Here is $q_n$ for $n = 2,3,4,5$:
\[
\begin{array}{rcl}
q_2&=& J_1,\\
q_3&=& -J_3 + J_1J_2 = \left|\begin{array}{cc}J_1&1\\J_3&J_2\end{array}\right|\,,\\
q_4&=& -J_1^2J_4 - J_3^2 + J_1J_2J_3 = \left|\begin{array}{ccc}J_1&1&0\\J_3&J_2&J_1\\0&J_4&J_3\end{array}\right|\,,\\
q_5&=& -J_5^2 + 2J_1J_4J_5+J_2J_3J_5-J_1J_2^2J_5 - J_1^2J_4^2-J_3^2J_4 + J_1J_2J_3J_4 = \left|\begin{array}{cccc}J_1&1&0&0\\J_3&J_2&J_1&1\\J_5&J_4&J_3&J_2\\0&0&J_5&J_4\end{array}\right|\,.
%\\q_6&=& -J_1^3J_6^2-3J_1J_3J_5J_6+2J_1^2J_2J_5J_6+J_1^2J_3J_4J_6+J_3^3J_6-J_1J_2J_3^2J_6\\&&-J_5^3+2J_1J_4J_5^2+J_2J_3J_5^2-J_1J_2^2J_5^2-J_1^2J_4^2J_5-J_3^2J_4J_5+J_1J_2J_3J_4J_5
\end{array}
\]
\end{remark}

\begin{proof}[Proof of Proposition~\ref{J2formula}] Fix $n$ and refer to the leading $r \times r$ principal submatrix of $\mathcal{M}$ as $\mathcal{M}_{r}$, so our claim is that $q_n = \mathrm{det}\,\mathcal{M}_{n-1}$. By observation, $\mathrm{det}\,\mathcal{M}_{n-1}$ is a polynomial of degree $n-1$ in $J$. 

Define $J_0 = 1$, $J_k=0$ for $k<0$, and define the polynomials
\[
\hat{P}(J,\mu) := J_n +\mu^2 J_{n-2} + \mu^4 J_{n-4} + \cdots\,, \quad \hat{Q}(J,\mu) := J_{n-1} + \mu^2 J_{n-3} + \cdots\,.
\]
It is straightforward to confirm that 
\[
\frac{1}{2}\left(\mathrm{det}\,(M+\mu I) + \mathrm{det}\,(M-\mu I)\right) = \hat{P}(J,\mu), \quad \frac{1}{2}\left(\mathrm{det}\,(M+\mu I) -  \mathrm{det}\,(M-\mu I)\right) = \mu\hat{Q}(J,\mu)\,.
\]
We now regard $\hat{P}, \hat{Q}$ as elements of $\mathbb{Z}[J][\mu]$ and claim that they have a common root $\mu_0 \in \mathbb{C}$ at some $x \in \mathbb{C}^k$ (i.e., there exists $\mu_0 \in \mathbb{C}$ such that $\hat{P}(J(x), \mu_0)=\hat{Q}(J(x), \mu_0) = 0$) if and only if $M(x)$ has a pair of eigenvalues which sum to zero, namely, either (i) $M(x)$ has a pair of opposite eigenvalues $\pm \mu_0$ ($\mu_0 \neq 0$), or (ii) $M(x)$ has a zero eigenvalue of multiplicity at least two. To see this, note that $\hat{P}(J,0) = \hat{Q}(J,0)=0$ if and only if $J_n = J_{n-1} = 0$, namely $M$ has a zero eigenvalue of multiplicity at least two; on the other hand, if $\mu_0 \neq 0$, then $\hat{P}(J,\mu_0)=\hat{Q}(J,\mu_0)=0$ if and only if $\mathrm{det}\,(M+\mu_0 I) = \hat{P}(J,\mu_0) + \mu_0\hat{Q}(J,\mu_0) = 0$ and $\mathrm{det}\,(M-\mu_0 I) = \hat{P}(J,\mu_0) - \mu_0\hat{Q}(J,\mu_0) = 0$, namely $M$ has eigenvalues $\pm \mu_0$. 

By definition of the resultant (Chapter~4 of \cite{Sturmfels02solvingsystems}), $\hat{P}$ and $\hat{Q}$ have a common root (in $\mathbb{C}$) if and only if the resultant $\mathrm{Res}_{\mu}(\hat{P},\hat{Q}) = 0$. But $\mathcal{M}_{n-1}$ is precisely the Sylvester matrix of $\hat{P}, \hat{Q}$ after some row and column permutations, and so $\mathrm{det}\,\mathcal{M}_{n-1}$ is, upto sign, equal to $\mathrm{Res}_{\mu}(\hat{P},\hat{Q})$. Thus $\mathrm{det}\,\mathcal{M}_{n-1}$ has the same zero set as $q_n$. But (Lemma~\ref{lemirred}) $q_n$ is irreducible and both $q_n$ and $\mathrm{det}\,\mathcal{M}_{n-1}$ have degree $n-1$. Thus, by Hilbert's Nullstellensatz, they are equal upto some constant, which we now show to be one.

We have already observed in the proof of Lemma~\ref{lemirred} that the monomial $\lambda_1^{n-1}\lambda_2^{n-2}\cdots \lambda_{n-1}^1$ occurs in $p_n$ with coefficient $1$. We claim that the unique way that this monomial arises in $\mathrm{det}\,\mathcal{M}_{n-1}$ (regarded as a polynomial in $\lambda$) is from the product $J_1J_2\cdots J_{n-1}$, where it occurs with coefficient $1$. This is easy by induction on $n$. The base case $n=2$ is trivial as $\mathrm{det}\,\mathcal{M}_{1} = J_1 = \lambda_1+\lambda_2$. Suppose the claim holds for $n=k-1$. By observation, $\mathrm{det}\,\mathcal{M}_{k-1} = J_{k-1}\mathrm{det}\,\mathcal{M}_{k-2} + J_{k}R$ where $R$ is some polynomial. Since the monomial $\lambda_1^{k-1}\lambda_2^{k-2}\cdots \lambda_{k-1}^1$ does not include $\lambda_k$ it clearly cannot occur in $J_{k}R$ (regarded as a polynomial in $\lambda$), and so occurs only in $J_{k-1}\mathrm{det}\,\mathcal{M}_{k-2}$, and with the same coefficient as $\lambda_1^{k-2}\lambda_2^{k-3}\cdots \lambda_{k-2}^1$ occurs in $\mathrm{det}\,\mathcal{M}_{k-2}$ namely, by the inductive hypothesis, $1$. Thus, writing both $q_n$ and $\mathrm{det}\,\mathcal{M}_{n-1}$ as polynomials in $\lambda$, the monomial $\lambda_1^{n-1}\lambda_2^{n-2}\cdots \lambda_{n-1}^1$ occurs with the same coefficient in both. This completes the proof that $q_n = \mathrm{det}\,\mathcal{M}_{n-1}$.
\end{proof}

\section{Some examples}

Rather than moving straight on to further results, we first present some examples. These demonstrate how Proposition~\ref{J2formula}, and some additional corollaries and related results to be proved later, can be used to make claims about $\mathrm{det}^{[2]}\,M$ for various real polynomial matrices $M$. If $M$ is an $n$-pattern, then $G_M$ refers to the associated signed digraph, and $G_{\overline{M}}$ refers to the associated unsigned digraph (namely, $G_M$ without edge-signs). In each case, $J_i$ refers to $i$th minor-sum of $M$. A homogeneous polynomial of degree $k$ is termed a ``$k$-form''. 

\begin{example}
Consider the following $3$-pattern $M$, and associated digraph:

\begin{center}
\begin{tikzpicture}[scale=1.5]

\node at (-5,-0.4) {$M=\left(\begin{array}{ccc}X_1&-X_2&-X_3\\X_4&X_5&X_6\\X_7&0&X_8\end{array}\right)$};

\fill[color=black] (0,0) circle (1.5pt);
\fill[color=black] (-0.5,-0.87) circle (1.5pt);
\fill[color=black] (0.5,-0.87) circle (1.5pt);

\begin{scope}[rotate=-60]
\draw [->, thick, dashed] (0.1,0.03) .. controls (0.4,0.13) and (0.6,0.13) .. (0.9,0.03);
\draw [->, thick] (0.9,-0.05) .. controls (0.6,-0.15) and (0.4,-0.15) .. (0.1,-0.05);
\end{scope}

\begin{scope}[rotate=-120]
\draw [->, thick, dashed] (0.1,0.03) .. controls (0.4,0.13) and (0.6,0.13) .. (0.9,0.03);
\draw [->, thick] (0.9,-0.05) .. controls (0.6,-0.15) and (0.4,-0.15) .. (0.1,-0.05);
\end{scope}

\draw[<-, thick] (-0.4,-0.87) -- (0.4,-0.87); 
%loops

\draw [-, thick] (0.05,0.05) .. controls (0.25,0.25) and (0.1,0.4) .. (0,0.4);
\draw [-, thick] (-0.05,0.05) .. controls (-0.25,0.25) and (-0.1,0.4) .. (0,0.4);

\begin{scope}[xshift=0.5cm,yshift=-0.87cm]
\draw [-, thick] (0.05,0.05) .. controls (0.25,0.25) and (0.4,0.1) .. (0.4,0);
\draw [-, thick] (0.05,-0.05) .. controls (0.25,-0.25) and (0.4,-0.1) .. (0.4,0);
\end{scope}

\begin{scope}[xshift=-0.5cm,yshift=-0.87cm]
\draw [-, thick] (-0.05,0.05) .. controls (-0.25,0.25) and (-0.4,0.1) .. (-0.4,0);
\draw [-, thick] (-0.05,-0.05) .. controls (-0.25,-0.25) and (-0.4,-0.1) .. (-0.4,0);
\end{scope}
\end{tikzpicture}
\end{center}
Without computing $\mathrm{det}^{[2]}\,M$ (which is, of course, easy enough) we can conclude from examination of $G_M$ that $\mathrm{det}^{[2]}\,M>0$. This follows from Proposition~\ref{prop33} below. 

\end{example}

\begin{example}
\label{ex4c}
Consider the following $4$-pattern $M$ and associated digraph:

\begin{center}
\begin{tikzpicture}[scale=1.5]
\begin{scope}[rotate=0]
%\fill[color=black!20] (-1.4,-1.2) -- (-1.4,0.2) -- (1.2,0.2) -- (1.2,-1.2) -- cycle;
\fill[color=black] (0,0) circle (1.5pt);
\fill[color=black] (-0.5,-0.87) circle (1.5pt);
\fill[color=black] (0.5,-0.87) circle (1.5pt);
\fill[color=black] (0,0.5) circle (1.5pt);
%\node at (-0.8,0) {$\mathrm{7a}$};

\draw[->,thick, dashed] (0,0.42) -- (0,0.08);

\begin{scope}[xshift=-0.5cm,yshift=-0.87cm,rotate=70]
\draw [->, thick] (0.1,0.07) .. controls (0.57,0.17) and (0.87,0.17) .. (1.34,0.07);
%\draw [-, thick] (0.9,-0.05) .. controls (0.6,-0.15) and (0.4,-0.15) .. (0.1,-0.05);
\end{scope}

\begin{scope}[xshift=0.5cm,yshift=-0.87cm,rotate=110]
%\draw [-, thick] (0.1,0.05) .. controls (0.57,0.15) and (0.87,0.15) .. (1.34,0.05);
\draw [->, thick,dashed] (1.34,-0.07) .. controls (0.87,-0.17) and (0.57,-0.17) .. (0.1,-0.07);
\end{scope}

%%right
\begin{scope}[rotate=-60]
\draw [->, thick,dashed] (0.1,0.03) .. controls (0.4,0.13) and (0.6,0.13) .. (0.9,0.03);
\draw [->, thick,dashed] (0.9,-0.05) .. controls (0.6,-0.15) and (0.4,-0.15) .. (0.1,-0.05);
\end{scope}
%%left
\begin{scope}[rotate=-120]
\draw [->, thick,dashed] (0.1,0.03) .. controls (0.4,0.13) and (0.6,0.13) .. (0.9,0.03);
\draw [->, thick] (0.9,-0.05) .. controls (0.6,-0.15) and (0.4,-0.15) .. (0.1,-0.05);
\end{scope}
%%bottom
\begin{scope}[xshift=-0.5cm,yshift=-0.87cm]
\draw [->, thick] (0.1,0.03) .. controls (0.4,0.13) and (0.6,0.13) .. (0.9,0.03);
\draw [->, thick,dashed] (0.9,-0.05) .. controls (0.6,-0.15) and (0.4,-0.15) .. (0.1,-0.05);
\end{scope}

%right-loop
%\begin{scope}[xshift=0.5cm,yshift=-0.87cm]
%\draw [-, thick] (0.05,0.05) .. controls (0.25,0.25) and (0.4,0.1) .. (0.4,0);
%\draw [-, thick] (0.05,-0.05) .. controls (0.25,-0.25) and (0.4,-0.1) .. (0.4,0);
%\end{scope}
%left-loop
\begin{scope}[xshift=-0.5cm,yshift=-0.87cm]
\draw [-, thick,dashed] (-0.05,0.05) .. controls (-0.25,0.25) and (-0.4,0.1) .. (-0.4,0);
\draw [-, thick,dashed] (-0.05,-0.05) .. controls (-0.25,-0.25) and (-0.4,-0.1) .. (-0.4,0);
\end{scope}
\end{scope}

\node at (-5,-0.3) {$M=\left(\begin{array}{cccc}-X_1&X_2&X_3&X_4\\0&0&-X_5&-X_6\\-X_7&0&0&-X_8\\-X_{9}&0&-X_{10}&0\end{array}\right)$};

\end{tikzpicture}
\end{center}
$J_3>0$, $J_4>0$ and $J_3-J_1J_2>0$, and consequently, from the formula in Proposition~\ref{J2formula}, $q_4 = -J_1^2J_4 - J_3(J_3 - J_1J_2)<0$. The signs of $J_3, J_4$ and $J_3-J_1J_2$ are easily computed, but are also verifiable by examining cycles in $G_M$ (see the discussion in Section~\ref{secsignpat} and in Remark~\ref{min3mixed} below). 
\end{example}

\begin{example}
\label{ex4a}
%maxima file: 4dnonsignpat
Below is a $4\times 4$ matrix of $1$-forms. $-M$ arises as the Jacobian matrix of the CRN shown to the right under weak assumptions on the rates of reaction.
\begin{center}
\begin{tikzpicture}[scale=1.6]

%\fill[color=black!20] (-1.2,-1.2) -- (-1.2,0.2) -- (1.2,0.2) -- (1.2,-1.2) -- cycle;

\node at (-4.5,-0.42) {$M=\left(\begin{array}{ccccc}X_1&X_2-X_3&0&-X_4\\X_1&X_2+X_5&-X_6-X_7&-X_8\\-X_1&-X_2-X_5&X_6+X_7&X_8\\0&X_3-X_5&-X_6+X_7&X_4+X_8\end{array}\right)$};

\node at (-0.05,0) {$\mathrm{X}+\mathrm{Z}$};
\node at (-0.6,-0.87) {$\mathrm{W}+\mathrm{X}$};
\node at (0.5,-0.87) {$\mathrm{Y}$};

\draw[->, thick] (-0.2,-0.17) -- (-0.5,-0.73); 
\draw[<-, thick] (0.1,-0.17) -- (0.4,-0.73); 
\draw[->, thick] (-0.2,-0.87) -- (0.4,-0.87);

\begin{scope}[xshift=0.8cm,yshift=-0.42cm]
\node at (-0.05,0) {$\mathrm{X}$};
\node at (1.2,0) {$\mathrm{Y+Z}$};
\draw [->, thick] (0.1,0.15) .. controls (0.4,0.25) and (0.6,0.25) .. (0.9,0.15);
\draw [->, thick] (0.9,-0.15) .. controls (0.6,-0.25) and (0.4,-0.25) .. (0.1,-0.15);
\end{scope}

\end{tikzpicture}
\end{center}
$X_1, \ldots, X_8$ are real, positive variables. Unlike the examples above, and most of those to follow, $M$ is not a sign pattern and the corresponding set of matrices $\{M(x_1, \ldots, x_8) \colon x_1, \ldots, x_8 >0\}$ is not a subset of any qualitative class. $\mathrm{det}^{[2]}\,M$ is a $6$-form in $X_1, \ldots, X_8$ with 203 terms. However, $\mathrm{det}\,M$ vanishes identically, namely $J_4=0$. This follows, without any calculation, from a little theory of CRNs \cite{gunawardenaCRNT}: the fact that there are $5$ reactions (arcs) and two connected components in the digraph above implies a factorisation of $M$ where the first factor (the so-called ``stoichiometric matrix'') has rank $\leq 3$. Thus, according to the formula in Proposition~\ref{J2formula}, $\mathrm{det}^{[2]}M$ factorises as $J_3(J_1J_2-J_3)$. Each factor is easily computed to be positive on $\mathrm{R}^8_{\gg 0}$, and so $\mathrm{det}^{[2]}\,M>0$ on $\mathrm{R}^8_{\gg 0}$ without full calculation of $\mathrm{det}^{[2]}\,M$. In particular, this CRN is incapable of Hopf bifurcations. 
\end{example}

\begin{example}
\label{ex4b}
Consider the following $4$-pattern and associated digraph

\begin{center}
\begin{tikzpicture}[scale=1.5]
%\fill[color=black!20] (-1.4,-1.2) -- (-1.4,0.2) -- (1.2,0.2) -- (1.2,-1.2) -- cycle;

\fill[color=black] (0,0) circle (1.5pt);
\fill[color=black] (-0.87,-0.5) circle (1.5pt);
\fill[color=black] (0.87,-0.5) circle (1.5pt);
\fill[color=black] (0,-1) circle (1.5pt);
%Loops

\draw [-, thick] (0.05,0.05) .. controls (0.25,0.25) and (0.1,0.4) .. (0,0.4);
\draw [-, thick] (-0.05,0.05) .. controls (-0.25,0.25) and (-0.1,0.4) .. (0,0.4);

\begin{scope}[xshift=-0.87cm,yshift=-0.5cm]
\draw [-, thick] (-0.05,0.05) .. controls (-0.25,0.25) and (-0.4,0.1) .. (-0.4,0);
\draw [-, thick] (-0.05,-0.05) .. controls (-0.25,-0.25) and (-0.4,-0.1) .. (-0.4,0);
\end{scope}

\begin{scope}[yshift=-1cm]
\draw [-, thick] (0.05,-0.05) .. controls (0.25,-0.25) and (0.1,-0.4) .. (0,-0.4);
\draw [-, thick] (-0.05,-0.05) .. controls (-0.25,-0.25) and (-0.1,-0.4) .. (0,-0.4);
\end{scope}

\begin{scope}[xshift=0.87cm,yshift=-0.5cm]
\draw [-, thick] (0.05,0.05) .. controls (0.25,0.25) and (0.4,0.1) .. (0.4,0);
\draw [-, thick] (0.05,-0.05) .. controls (0.25,-0.25) and (0.4,-0.1) .. (0.4,0);
\end{scope}

%%end of loops

\begin{scope}[rotate=210]
\draw [->, thick,dashed] (0.1,0.03) .. controls (0.4,0.13) and (0.6,0.13) .. (0.9,0.03);
\draw [->, thick] (0.9,-0.05) .. controls (0.6,-0.15) and (0.4,-0.15) .. (0.1,-0.05);
\end{scope}
\begin{scope}[rotate=-30]
\draw [->, thick] (0.1,0.03) .. controls (0.4,0.13) and (0.6,0.13) .. (0.9,0.03);
\draw [->, thick, dashed] (0.9,-0.05) .. controls (0.6,-0.15) and (0.4,-0.15) .. (0.1,-0.05);
\end{scope}

\begin{scope}[yshift=-1cm,rotate=30]
\draw [->, thick, dashed] (0.1,0.03) .. controls (0.4,0.13) and (0.6,0.13) .. (0.9,0.03);
\draw [->, thick] (0.9,-0.05) .. controls (0.6,-0.15) and (0.4,-0.15) .. (0.1,-0.05);
\end{scope}

\begin{scope}[yshift=-1cm,rotate=-210]
\draw [->, thick] (0.1,0.03) .. controls (0.4,0.13) and (0.6,0.13) .. (0.9,0.03);
\draw [->, thick, dashed] (0.9,-0.05) .. controls (0.6,-0.15) and (0.4,-0.15) .. (0.1,-0.05);
\end{scope}

\node at (-5,-0.5) {$M=\left(\begin{array}{cccc}X_1&X_2&0&-X_3\\-X_4&X_5&X_6&0\\0&-X_7&X_8&X_9\\X_{10}&0&-X_{11}&X_{12}\end{array}\right)$};

\end{tikzpicture}
\end{center}

As in the previous example, $M$ can (after a sign change) be regarded as the Jacobian matrix of a CRN,
%Fully open extension of A-->A+B-->B-->B+C-->C-->C+D-->D-->D-->A+D-->A
but we may equally just consider it as a sign pattern. $\mathrm{det}^{[2]}\,M$ is a $6$-form in $X_1, \ldots, X_{12}$, with $194$ terms, of which $8$ are negative. Without computation, $\mathrm{det}^{[2]}\,M>0$ as $G_M$ satisfies a combinatorial sufficient condition for $[2]$-positivity of $4$-patterns, namely Proposition~\ref{prop44} below. In order to apply Proposition~\ref{prop44}, we observe the absence of $3$-cycles in $G_M$, along with the facts that all $1$- and $2$-cycles include an odd number of positive edges, while both $4$-cycles include an even number of positive edges.
\end{example}

\begin{example}
\label{exbasic}
%digraph &DOFFN_
%Maxima script in maxima-scripts/J2posdet/5dneg
Consider the following $5$-pattern and associated digraph:

\begin{center}
\begin{tikzpicture}[scale=1.5]
%\fill[color=black!20] (-1.4,-1.2) -- (-1.4,0.2) -- (1.2,0.2) -- (1.2,-1.2) -- cycle;

%\fill[color=black] (0,0) circle (1.5pt);
\fill[color=black] (-0.85,0) circle (1.5pt);
\fill[color=black] (-0.26,0.81) circle (1.5pt);
\fill[color=black] (-0.26,-0.81) circle (1.5pt);
%\fill[color=black] (-0.87,-0.5) circle (1.5pt);
\fill[color=black] (0.69,0.5) circle (1.5pt);
\fill[color=black] (0.69,-0.5) circle (1.5pt);
%\fill[color=black] (0,-1) circle (1.5pt);
%Loops

\begin{scope}[xshift=-0.85cm]
\draw [-, thick] (-0.05,0.05) .. controls (-0.25,0.25) and (-0.4,0.1) .. (-0.4,0);
\draw [-, thick] (-0.05,-0.05) .. controls (-0.25,-0.25) and (-0.4,-0.1) .. (-0.4,0);
\end{scope}

%%end of loops

\begin{scope}[xshift=-0.88cm,rotate=-54]
\draw [->, thick] (0.1,0.03) .. controls (0.4,0.13) and (0.6,0.13) .. (0.9,0.03);
\draw [->, thick] (0.9,-0.05) .. controls (0.6,-0.15) and (0.4,-0.15) .. (0.1,-0.05);
\end{scope}
\begin{scope}[xshift=-0.85cm,rotate=54]
\draw [->, thick] (0.1,0.03) .. controls (0.4,0.13) and (0.6,0.13) .. (0.9,0.03);
%\draw [->, thick] (0.9,-0.05) .. controls (0.6,-0.15) and (0.4,-0.15) .. (0.1,-0.05);
\end{scope}

\begin{scope}[xshift=-0.26cm,yshift=0.81cm,rotate=-18]
\draw [->, thick,dashed] (0.1,0.03) .. controls (0.4,0.13) and (0.6,0.13) .. (0.9,0.03);
%\draw [->, thick] (0.9,-0.05) .. controls (0.6,-0.15) and (0.4,-0.15) .. (0.1,-0.05);
\end{scope}
\begin{scope}[xshift=-0.26cm,yshift=-0.81cm,rotate=18]
%\draw [->, thick] (0.1,0.03) .. controls (0.4,0.13) and (0.6,0.13) .. (0.9,0.03);
\draw [->, thick] (0.9,-0.05) .. controls (0.6,-0.15) and (0.4,-0.15) .. (0.1,-0.05);
\end{scope}
\begin{scope}[xshift=0.69cm,yshift=-0.5cm,rotate=90]
%\draw [->, thick] (0.1,0.03) .. controls (0.4,0.13) and (0.6,0.13) .. (0.9,0.03);
\draw [->, thick,dashed] (0.9,-0.05) .. controls (0.6,-0.15) and (0.4,-0.15) .. (0.1,-0.05);
\end{scope}

\draw[->,thick] (-0.77,0.025) -- (0.61,0.475);
\draw[->,thick] (-0.77,0) -- (0.61,-0.475);

\draw[->,thick,dashed] (-0.21,0.74) -- (0.64,-0.43);
\draw[->,thick] (-0.26,0.73) -- (-0.26,-0.73);

\draw[->,thick] (0.64,0.43) -- (-0.21,-0.74);

\node at (-5,0) {$M=\left(\begin{array}{ccccc}X_1&X_2&X_3&X_4&X_5\\0&0&-X_6&-X_7&X_8\\0&0&0&-X_9&X_{10}\\0&0&0&0&X_{11}\\X_{12}&0&0&0&0\end{array}\right)$};

\end{tikzpicture}
\end{center}
$\mathrm{det}^{[2]}\,M$ is a $10$-form in $X_1, \ldots, X_{12}$, with $38$ negative terms and $3$ positive terms. With a little effort we can complete some squares and confirm directly that $\mathrm{det}^{[2]}\,M<0$. However, this conclusion is obtained most easily by noting that $J_1, J_3, J_4, J_5>0$, while $J_2<0$, as verified by direct calculation, or examination of the cycles of $G_M$, as discussed in Section~\ref{secsignpat} below. Consequently, by Lemma~\ref{lem5pat1} below, $\mathrm{det}^{[2]}\,M<0$ on $\mathbb{R}^{12}_{\gg 0}$. 
\end{example}

\begin{example}
\label{exharder}
%digraph &DH`]l?
%Maxima script in maxima-scripts/J2posdet/5dneg1
Consider the following $5$-pattern and associated digraph:
\begin{center}
\begin{tikzpicture}[scale=1.5]
%\fill[color=black!20] (-1.4,-1.2) -- (-1.4,0.2) -- (1.2,0.2) -- (1.2,-1.2) -- cycle;

%\fill[color=black] (0,0) circle (1.5pt);
\fill[color=black] (-0.85,0) circle (1.5pt);
\fill[color=black] (-0.26,0.81) circle (1.5pt);
\fill[color=black] (-0.26,-0.81) circle (1.5pt);
%\fill[color=black] (-0.87,-0.5) circle (1.5pt);
\fill[color=black] (0.69,0.5) circle (1.5pt);
\fill[color=black] (0.69,-0.5) circle (1.5pt);
%\fill[color=black] (0,-1) circle (1.5pt);
%Loops

\begin{scope}[xshift=0.69cm,yshift=0.5cm,rotate=198]
\draw [-, thick] (-0.05,0.05) .. controls (-0.25,0.25) and (-0.4,0.1) .. (-0.4,0);
\draw [-, thick] (-0.05,-0.05) .. controls (-0.25,-0.25) and (-0.4,-0.1) .. (-0.4,0);
\end{scope}
\begin{scope}[xshift=0.69cm,yshift=-0.5cm,rotate=162]
\draw [-, thick] (-0.05,0.05) .. controls (-0.25,0.25) and (-0.4,0.1) .. (-0.4,0);
\draw [-, thick] (-0.05,-0.05) .. controls (-0.25,-0.25) and (-0.4,-0.1) .. (-0.4,0);
\end{scope}

%%end of loops

\begin{scope}[xshift=-0.88cm,rotate=-54]
%\draw [->, thick] (0.1,0.03) .. controls (0.4,0.13) and (0.6,0.13) .. (0.9,0.03);
\draw [->, thick] (0.9,-0.05) .. controls (0.6,-0.15) and (0.4,-0.15) .. (0.1,-0.05);
\end{scope}
\begin{scope}[xshift=-0.85cm,rotate=54]
\draw [->, thick] (0.1,0.03) .. controls (0.4,0.13) and (0.6,0.13) .. (0.9,0.03);
%\draw [->, thick] (0.9,-0.05) .. controls (0.6,-0.15) and (0.4,-0.15) .. (0.1,-0.05);
\end{scope}

\begin{scope}[xshift=-0.26cm,yshift=0.81cm,rotate=-18]
\draw [->, thick,dashed] (0.1,0.03) .. controls (0.4,0.13) and (0.6,0.13) .. (0.9,0.03);
%\draw [->, thick] (0.9,-0.05) .. controls (0.6,-0.15) and (0.4,-0.15) .. (0.1,-0.05);
\end{scope}
\begin{scope}[xshift=-0.26cm,yshift=-0.81cm,rotate=18]
%\draw [->, thick] (0.1,0.03) .. controls (0.4,0.13) and (0.6,0.13) .. (0.9,0.03);
\draw [->, thick] (0.9,-0.05) .. controls (0.6,-0.15) and (0.4,-0.15) .. (0.1,-0.05);
\end{scope}
\begin{scope}[xshift=0.69cm,yshift=-0.5cm,rotate=90]
%\draw [->, thick] (0.1,0.03) .. controls (0.4,0.13) and (0.6,0.13) .. (0.9,0.03);
\draw [->, thick,dashed] (0.9,-0.05) .. controls (0.6,-0.15) and (0.4,-0.15) .. (0.1,-0.05);
\end{scope}

\draw[->,thick] (-0.77,0.025) -- (0.61,0.475);
\draw[->,thick] (-0.77,0) -- (0.61,-0.475);

\draw[->,thick,dashed] (-0.21,0.74) -- (0.64,-0.43);
\draw[->,thick] (-0.26,0.73) -- (-0.26,-0.73);

\draw[->,thick] (0.64,0.43) -- (-0.21,-0.74);

\node at (-5,0) {$M=\left(\begin{array}{ccccc}0&X_1&X_2&X_3&0\\0&0&-X_4&-X_5&X_6\\0&0&X_7&-X_8&X_9\\0&0&0&X_{10}&X_{11}\\X_{12}&0&0&0&0\end{array}\right)$};

\end{tikzpicture}
\end{center}
$\mathrm{det}^{[2]}\,M \leq 0$, but direct confirmation of this fact is quite nontrivial. $\mathrm{det}^{[2]}\,M$ is a $10$-form in $X_1, \ldots, X_{12}$, having $91$ negative terms and $13$ positive terms. It can be confirmed with semidefinite programming, as described in Section~\ref{secpsatz} below, that $-\mathrm{det}^{[2]}\,M$ does not belong to the smallest cone of polynomials which includes $X_1, \ldots, X_{12}$. However, very easy computations confirm that $J_1, J_2J_3, J_5, J_1J_4-J_2J_3>0$ on $\mathbb{R}^{12}_{\gg 0}$. By a general result on $5$-patterns below (Lemma~\ref{lem5pat3}), $\mathrm{det}^{[2]}\,M\leq 0$.
\end{example}

\begin{example}
\label{ex5CRN}
%no. 862 in s5r5WRG
Consider the following $5\times 5$ matrix of one forms which, as in Example~\ref{ex4a}, arises as the Jacobian matrix of a CRN (shown to the right):
\begin{center}
\begin{tikzpicture}[scale=1.7]
%\fill[color=black!20] (-1.4,-1.2) -- (-1.4,0.2) -- (1.2,0.2) -- (1.2,-1.2) -- cycle;

%\fill[color=black] (0,0) circle (1.5pt);
\node at (-0.85,0) {$\mathrm{W}$};
\node at (-0.26,0.81) {$\mathrm{X}$};
\node at (0.72,0.45) {$\mathrm{Y+V}$};
\node at (0.72,-0.45) {$\mathrm{Z+W}$};
\node at (-0.26,-0.81) {$\mathrm{V}$};

\draw[->, thick] (-0.77,0.12) -- (-0.34,0.69);
\draw[<-, thick] (-0.77,-0.12) -- (-0.34,-0.69);
\draw[->, thick] (-0.11,0.81) -- (0.55,0.64);
\draw[<-, thick] (-0.11,-0.81) -- (0.55,-0.64);
\draw[->, thick] (0.72,0.25) -- (0.69,-0.25);

\node at (-5,0) {$M=\left(\begin{array}{ccccc}X_1+X_4&-X_6&-X_3&X_5&-X_7\\-X_1-X_4&X_6+X_2&0&-X_5&X_7\\0&-X_2&X_3&0&0\\X_4&0&-X_3&X_5&0\\-X_4&X_6&0&-X_5&X_7\end{array}\right)$};

\end{tikzpicture}
\end{center}
$\mathrm{det}^{[2]}\,M$ is a $10$-form in $X_1, \ldots, X_7$ with 531 terms. The associated set of matrices 
\[
\{M(x_1, \ldots, x_7) \colon x_1, \ldots, x_7 >0\}
\] 
is a subset of a qualitative class, but this does not help as $\mathrm{det}^{[2]}$ for the associated sign pattern is sign-indefinite. However, as in Example~\ref{ex4a}, it can be ascertained without calculation that $\mathrm{det}\,M$ vanishes identically. Thus, according to Proposition~\ref{J2formula}, $\mathrm{det}^{[2]}M$ factorises as $-J_4(J_1^2J_4+J_3(J_1J_2-J_3))$. We easily compute that $J_4>0$, $J_3>0$ and $J_1J_2-J_3>0$, and thus $\mathrm{det}^{[2]}M>0$. 

\end{example}

\begin{example}
Let $M$ be any $n$-pattern such that $G_{\overline{M}}$ is bipartite, for example:

\begin{center}
\begin{tikzpicture}[scale=1.5]

%\fill[color=black] (0,0) circle (1.5pt);
\fill[color=black] (-1,0) circle (1.5pt);
\fill[color=black] (1,0) circle (1.5pt);
\fill[color=black] (-0.5,0.87) circle (1.5pt);
\fill[color=black] (0.5,0.87) circle (1.5pt);
\fill[color=black] (-0.5,-0.87) circle (1.5pt);
\fill[color=black] (0.5,-0.87) circle (1.5pt);

\begin{scope}[xshift=-1cm,rotate=-60]
\draw [->, thick] (0.1,0.03) .. controls (0.4,0.13) and (0.6,0.13) .. (0.9,0.03);
\draw [->, thick] (0.9,-0.05) .. controls (0.6,-0.15) and (0.4,-0.15) .. (0.1,-0.05);
\end{scope}

\begin{scope}[xshift=-1cm,rotate=60]
\draw [->, thick] (0.1,0.03) .. controls (0.4,0.13) and (0.6,0.13) .. (0.9,0.03);
\draw [->, thick] (0.9,-0.05) .. controls (0.6,-0.15) and (0.4,-0.15) .. (0.1,-0.05);
\end{scope}

\begin{scope}[xshift=1cm,rotate=120]
\draw [->, thick] (0.1,0.03) .. controls (0.4,0.13) and (0.6,0.13) .. (0.9,0.03);
\draw [->, thick] (0.9,-0.05) .. controls (0.6,-0.15) and (0.4,-0.15) .. (0.1,-0.05);
\end{scope}

\begin{scope}[xshift=1cm,rotate=-120]
\draw [->, thick] (0.1,0.03) .. controls (0.4,0.13) and (0.6,0.13) .. (0.9,0.03);
\draw [->, thick] (0.9,-0.05) .. controls (0.6,-0.15) and (0.4,-0.15) .. (0.1,-0.05);
\end{scope}

\begin{scope}[xshift=-0.5cm,yshift=0.87cm]
\draw [->, thick] (0.1,0.03) .. controls (0.4,0.13) and (0.6,0.13) .. (0.9,0.03);
\draw [->, thick] (0.9,-0.05) .. controls (0.6,-0.15) and (0.4,-0.15) .. (0.1,-0.05);
\end{scope}

\begin{scope}[xshift=-0.5cm,yshift=-0.87cm]
\draw [->, thick] (0.1,0.03) .. controls (0.4,0.13) and (0.6,0.13) .. (0.9,0.03);
\draw [->, thick] (0.9,-0.05) .. controls (0.6,-0.15) and (0.4,-0.15) .. (0.1,-0.05);
\end{scope}

\begin{scope}[xshift=-1cm]
\draw [->, thick] (0.2,0.03) .. controls (0.8,0.13) and (1.2,0.13) .. (1.8,0.03);
\draw [->, thick] (1.8,-0.05) .. controls (1.2,-0.15) and (0.8,-0.15) .. (0.2,-0.05);
\end{scope}

\begin{scope}[xshift=-0.5cm,yshift=-0.87cm,rotate=60]
\draw [->, thick] (0.2,0.03) .. controls (0.8,0.13) and (1.2,0.13) .. (1.8,0.03);
\draw [->, thick] (1.8,-0.05) .. controls (1.2,-0.15) and (0.8,-0.15) .. (0.2,-0.05);
\end{scope}
 
\begin{scope}[xshift=0.5cm,yshift=-0.87cm,rotate=120]
\draw [->, thick] (0.2,0.03) .. controls (0.8,0.13) and (1.2,0.13) .. (1.8,0.03);
\draw [->, thick] (1.8,-0.05) .. controls (1.2,-0.15) and (0.8,-0.15) .. (0.2,-0.05);
\end{scope}

\end{tikzpicture}
\end{center}
Then $\mathrm{det}^{[2]}\,M = 0$. This is the claim in Proposition~\ref{propgen}(2) below.
\end{example}

\begin{example}
\label{exobstruct1a}
Let $M$ be any $6$-pattern such that $G_{\overline{M}}$ includes a subgraph isomorphic to
\begin{center}
\begin{tikzpicture}[scale=1.2]

\fill[color=black] (0,0) circle (1.5pt);
\fill[color=black] (-0.5,-0.87) circle (1.5pt);
\fill[color=black] (0.5,-0.87) circle (1.5pt);
\fill[color=black] (1.2,-0.87) circle (1.5pt);
\fill[color=black] (2,-0.87) circle (1.5pt);

\draw[->, thick] (-0.05,-0.087) -- (-0.45,-0.8); 
\draw[<-, thick] (0.05,-0.087) -- (0.45,-0.8); 
%Loop
\begin{scope}[xshift=-0.5cm,yshift=-0.87cm]
\draw [-, thick] (-0.05,0.05) .. controls (-0.25,0.25) and (-0.4,0.1) .. (-0.4,0);
\draw [-, thick] (-0.05,-0.05) .. controls (-0.25,-0.25) and (-0.4,-0.1) .. (-0.4,0);
\end{scope}

\begin{scope}[xshift=-0.5cm,yshift=-0.87cm]
\draw [->, thick] (0.1,0.05) .. controls (0.4,0.15) and (0.6,0.15) .. (0.9,0.05);
\draw [->, thick] (0.9,-0.05) .. controls (0.6,-0.15) and (0.4,-0.15) .. (0.1,-0.05);
\end{scope}

\begin{scope}[xshift=1.2cm, yshift=-0.87cm]
\draw [-, thick] (0.05,0.05) .. controls (0.25,0.25) and (0.1,0.4) .. (0,0.4);
\draw [-, thick] (-0.05,0.05) .. controls (-0.25,0.25) and (-0.1,0.4) .. (0,0.4);
\end{scope}

\begin{scope}[xshift=2cm, yshift=-0.87cm]
\draw [-, thick] (0.05,0.05) .. controls (0.25,0.25) and (0.1,0.4) .. (0,0.4);
\draw [-, thick] (-0.05,0.05) .. controls (-0.25,0.25) and (-0.1,0.4) .. (0,0.4);
\end{scope}

\end{tikzpicture}
\end{center}
Then $\mathrm{det}^{[2]}\,M \gtrless 0$. This follows from the discussion of ``obstructions'' in Section~\ref{secobstruct} below.

\end{example}

\section{Preliminaries on polynomials}
\label{secpoly}

In order to explore the consequences of Proposition~\ref{J2formula} we require a little additional material on the positivity of polynomials. Given $X = (X_1, \ldots, X_k)$, and a nonnegative integer vector $\gamma = (\gamma_1, \ldots, \gamma_k)$, we denote, as usual, the monomial $\prod X_i^{\gamma_i}$ by $X^{\gamma}$. 

\subsection{Newton polytopes}
Consider a polynomial $p=\sum c_iX^{\alpha_i}$ with exponent vectors $\alpha_i \in \mathbb{Z}^k_{\geq 0}$, and coefficients $c_i$ (assumed all nonzero). If $p'$ is obtained from $p$ by setting some coefficients of $p$ to zero we refer to $p'$ as a {\em subpolynomial} of $p$ and write $p' \subseteq p$. The {\em Newton polytope} of $p$, denoted $\mathrm{N}(p)$, is the convex hull of the exponent vectors $\{\alpha_i\}$ \cite{Sturmfels02solvingsystems}. Given a face $F$ of $N(p)$, we define the associated ``face polynomial'' $p_F := \sum_{\alpha_i \in F}c_iX^{\alpha_i} \subseteq p$. When $F$ is a vertex of $N(p)$ the associated face polynomial consists of a single term. Monomials (resp., terms) of $p$ associated with vertices of $\mathrm{N}(p)$ are referred to as vertex monomials (resp., vertex terms). A polynomial with both positive and negative vertex terms will be referred to as having {\em mixed vertices}. 

\begin{remark}
\label{remsubpoly}
Observe that if $p' \subseteq p$, then non-vertices of $p'$ must necessarily correspond to non-vertices of $p$. Observe also that changing the sign of a term in a polynomial does not cause any change in the nature of this term as a vertex or non-vertex term. 
\end{remark}

The following is well-known and useful. As the proof is brief and seems hard to find, it is given.
\begin{lemma}
\label{lemsgnindef1}
Let $p = \sum c_iX^{\alpha_i} \in \mathbb{R}[X]$. Then $p \geq 0$ on $\mathbb{R}^k_{\gg 0}$ if and only if $p_F\geq 0$ on $\mathbb{R}^k_{\gg 0}$ for each face $F$ of $N(p)$. 
\end{lemma}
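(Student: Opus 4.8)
The plan is to dispatch one direction trivially and prove the other by a scaling (``tropical'') limit argument. First observe that $N(p)$ is itself a face of $N(p)$, and the associated face polynomial is $p$ itself, so the implication ``$p_F \geq 0$ for all faces $F$ $\Rightarrow$ $p \geq 0$'' is immediate by taking $F = N(p)$. The content of the lemma is therefore the forward implication: assuming $p \geq 0$ on $\mathbb{R}^k_{\gg 0}$, I must show $p_F \geq 0$ on $\mathbb{R}^k_{\gg 0}$ for every face $F$.

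The key structural fact I would invoke is the standard description of the faces of a polytope: every face $F$ of $N(p)$ arises as the set of points minimizing some linear functional, i.e.\ there is a weight vector $w \in \mathbb{R}^k$ with $F = \{\alpha \in N(p)\colon \langle w, \alpha\rangle = m\}$, where $m := \min_{\alpha \in N(p)} \langle w, \alpha\rangle$. Since the vertices of $N(p)$ lie among the exponent vectors $\alpha_i$ and a face is the convex hull of the exponent vectors it contains, the $\alpha_i$ lying in $F$ are precisely those with $\langle w, \alpha_i\rangle = m$, while every other $\alpha_i$ satisfies $\langle w, \alpha_i\rangle > m$.

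With such a $w$ fixed, I would introduce the substitution $X_i = t^{w_i} y_i$ for $t > 0$ and $y \in \mathbb{R}^k_{\gg 0}$, which maps the positive orthant into itself (here $t^{w_i} = \exp(w_i \ln t) > 0$ is well defined for real $w_i$, so no rationality of $w$ is needed), whence positivity of $p$ is preserved along this curve. Under the substitution $X^{\alpha_i} = t^{\langle w, \alpha_i\rangle} y^{\alpha_i}$, and factoring out $t^m$ yields $t^{-m} p = p_F(y) + \sum_{\alpha_i \notin F} c_i\, t^{\langle w, \alpha_i\rangle - m} y^{\alpha_i}$, in which every exponent $\langle w, \alpha_i\rangle - m$ appearing in the remaining sum is strictly positive. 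Letting $t \to 0^+$, the remaining sum vanishes while $t^{-m} p \geq 0$ throughout; by continuity $p_F(y) \geq 0$, and since $y \in \mathbb{R}^k_{\gg 0}$ was arbitrary we conclude $p_F \geq 0$ on $\mathbb{R}^k_{\gg 0}$.

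The one step I would regard as the crux is the polytope fact that each face is exactly the minimizer set of a linear functional, with the exponent vectors in $F$ being precisely the $\alpha_i$ attaining the minimum value $m$; once this is in hand the limiting computation is entirely routine. The remaining care is purely bookkeeping: checking that the substitution preserves the domain and that the ``off-face'' terms carry strictly positive powers of $t$, so that they are annihilated in the limit.
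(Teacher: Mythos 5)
Your proof is correct and is essentially the paper's own argument: both isolate the face polynomial by scaling along an exponential curve determined by a supporting linear functional of $F$ and passing to a limit that kills the off-face terms. The only cosmetic differences are that you parametrise by $t\to 0^+$ with the minimiser convention where the paper uses $\gamma(t)_i = y_ie^{v_it}$ with $t\to\infty$, and you argue directly where the paper argues by contradiction.
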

\begin{proof} 
As $p_{N(p)}=p$, we only need to prove that if $p \geq 0$ on $\mathbb{R}^k_{\gg 0}$ then $p_F\geq 0$ on $\mathbb{R}^k_{\gg 0}$ for any face $F$ of $N(p)$ of dimension $0 \leq d \leq k-1$.  Let $H$ be a supporting hyperplane of $N(p)$ at $F$ defined by $H := \{x \in \mathbb{R}^k\colon v \cdot x = s\}$ for some $v \in \mathbb{R}^k$ and some $s \in \mathbb{R}$. By definition of a supporting hyperplane, $v \cdot \alpha_i = s$ for $\alpha_i \in F$, and $v \cdot \alpha_i < s$ for $\alpha_i \in N(p)\backslash F$. Suppose $p_F(y) < 0$ for some $y \in \mathbb{R}^k_{\gg 0}$. Consider the curve $\gamma \colon (0, \infty) \to \mathbb{R}^k_{\gg 0}$ defined by $(\gamma(t))_i = y_i e^{v_i t}$. Then
\[
p(\gamma(t)) %=  \sum c_i(y \circ e^{vt})^{\alpha_i} 
= \sum c_iy^{\alpha_i}e^{(v\cdot \alpha_i)t} = \sum_{\alpha_i \in F}c_iy^{\alpha_i}e^{st} + \sum_{\alpha_i \not \in F}c_iy^{\alpha_i}e^{(v\cdot \alpha_i)t}\,.
\]
So
\[
\lim_{t \to \infty}\frac{p(\gamma(t))}{e^{st}} = \sum_{\alpha_i \in F}c_iy^{\alpha_i} + \lim_{t \to \infty}\sum_{\alpha_i \not \in F}c_iy^{\alpha_i}e^{(v\cdot \alpha_i - s)t} = p_F(y) < 0\,.
\]
The last equality follows as $v\cdot \alpha_i < s$ when $\alpha_i \not \in F$, and so $\lim_{t \to \infty}\sum_{\alpha_i \not \in F}c_iy^{\alpha_i}e^{(v\cdot \alpha_i - s)t} = 0$. Clearly, for sufficiently large $t$, $p(\gamma(t))<0$, a contradiction. 
\end{proof}

The next lemma is an immediate corollary of Lemma~\ref{lemsgnindef1} when we note that a polynomial with a single term which is nonnegative on $\mathbb{R}^k_{\gg 0}$ must in fact be positive on $\mathbb{R}^k_{\gg 0}$. It also follows, for example, from a more general claim in Proposition~1 in \cite{panteakoeppelcraciun}.
\begin{lemma}
\label{lemsgnindef}
If $p \in \mathbb{R}[X]$ has mixed vertices, then $p \gtrless 0$ on $\mathbb{R}^k_{\gg 0}$.
\end{lemma}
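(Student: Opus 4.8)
The plan is to derive this as an immediate corollary of Lemma~\ref{lemsgnindef1}, exploiting the two vertex terms of opposite sign. Since $p$ has mixed vertices, there is a vertex $F_-$ of $N(p)$ whose associated face polynomial $p_{F_-}$ is a single term with negative coefficient, and a vertex $F_+$ whose face polynomial $p_{F_+}$ is a single term with positive coefficient (recall that the face polynomial at a vertex consists of a single monomial). The strategy is to use each of these to force sign-indefiniteness in one direction.

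First I would show that $p$ takes a negative value on $\mathbb{R}^k_{\gg 0}$. The face polynomial $p_{F_-} = cX^{\alpha}$ with $c < 0$ is strictly negative on $\mathbb{R}^k_{\gg 0}$, since every monomial is strictly positive there (this is exactly the observation flagged before the statement: a nonnegative single-term polynomial is in fact positive). Hence $p_{F_-}$ is not nonnegative on $\mathbb{R}^k_{\gg 0}$, and by the contrapositive of Lemma~\ref{lemsgnindef1}, $p$ itself is not nonnegative on $\mathbb{R}^k_{\gg 0}$; that is, $p$ is negative somewhere.

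To show that $p$ also takes a positive value, I would run the same argument for $-p$. By Remark~\ref{remsubpoly}, changing the signs of coefficients does not alter which terms are vertex terms, so $N(-p) = N(p)$ and $F_+$ remains a vertex of $N(-p)$; moreover $(-p)_{F_+} = -p_{F_+}$ is a single term with negative coefficient, hence strictly negative on $\mathbb{R}^k_{\gg 0}$. As before, Lemma~\ref{lemsgnindef1} gives that $-p$ is not nonnegative on $\mathbb{R}^k_{\gg 0}$, i.e.\ $p$ is positive at some point. Combining the two conclusions yields $p \gtrless 0$ on $\mathbb{R}^k_{\gg 0}$.

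Because the two halves are symmetric under $p \mapsto -p$, there is no genuine obstacle here; the only point requiring a moment's care is the invariance of the Newton polytope and its vertex set under sign flips of coefficients (Remark~\ref{remsubpoly}), which is what lets a positive vertex term of $p$ become a negative vertex term of $-p$ to which the first argument applies verbatim.
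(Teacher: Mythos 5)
Your proof is correct and is precisely the argument the paper intends: the paper states the lemma as an "immediate corollary" of Lemma~\ref{lemsgnindef1} together with the single-term observation, and your write-up simply spells out both directions (applying the contrapositive to $p$ for the negative vertex term and to $-p$ for the positive one). No gaps; the sign-flip invariance of the vertex set via Remark~\ref{remsubpoly} is exactly the right point to flag.
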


The converse of Lemma~\ref{lemsgnindef} is, of course, false: a polynomial may be sign-indefinite without mixed vertices (e.g., $X^2-3XY+Y^2$). Whether this can occur when $M$ is a sign pattern and the polynomial is $\mathrm{det}^{[2]}\,M$, is an interesting question whose answer seems currently unknown (see Remark~\ref{remverts} below).

\begin{remark}
\label{remSNS}
One consequence of Lemma~\ref{lemsgnindef} is that $\mathrm{det}\,M$ for an $n$-pattern $M$ is algebraically uninteresting. Suppose that $\mathrm{det}\,M \neq 0$ and $M$ has $k$ nonzero entries. Consider some monomial of $\mathrm{det}\,M$, say $X^{\alpha}$. As $X^{\alpha}$ corresponds to a particular matching of rows and columns of $M$, $\alpha$ lies on an $n$-dimensional face of the nonnegative orthant in $\mathbb{R}^k$, and is the unique exponent vector of $\mathrm{det}\,M$ on this face. It follows that $X^{\alpha}$ is a vertex monomial of $\mathrm{det}\,M$. Thus, all the terms of $\mathrm{det}\,M$ are vertex terms and, by Lemma~\ref{lemsgnindef}, $\mathrm{det}\,M$ is positive (resp., negative) if and only if all of its terms are positive (resp., negative).
\end{remark}

\subsection{The Positivstellensatz}
\label{secpsatz}
We refer the reader to \cite{BCR}, Theorem~4.2.2, for a complete statement of the Positivstellensatz of Krivine and Stengle, and to \cite{Parrilo03} for further discussion. Here we just state immediate consequences in the special case where the semialgebraic set of interest is the positive orthant. 
\begin{definition}
\label{defpolycone}
Let $X = (X_1, \ldots, X_k)$. Define $M_X :=\{X^{\alpha}\colon\alpha \in \mathbb{Z}^k_{\geq 0}\} \subseteq \mathbb{R}[X]$ to be the multiplicative monoid generated by $\{X_1, \ldots, X_k\}$, namely, the set of all monomials in $X$, including $1$. Define $S \subseteq \mathbb{R}[X]$ to be the squares in $\mathbb{R}[X]$, namely, $S := \{p \in \mathbb{R}[X]\colon p = q^2 \,\,\mbox{ for some }\,\, q \in \mathbb{R}[X]\}$. Define $C_X \subseteq \mathbb{R}[X]$ to be the smallest cone in $\mathbb{R}[X]$ containing $\{X_1, \ldots, X_k\}$, namely,
\[
C_X:= \{p \in \mathbb{R}[X]\colon p = \sum_{i=1}^ns_im_i, \,\,s_i \in S\,\mbox{ and }\, m_i \in M_X\}\,.
\]
Finally, let $C_X'$ consist of those elements of $C_X$ such that $s_i$ can be chosen to be a nonzero constant for at least one $i$. 
\end{definition}

\begin{lemma}
Let $X = (X_1, \ldots, X_k)$, and let $0 \neq h \in \mathbb{R}[X]$. Then
\begin{enumerate}
\item $h>0$ (resp., $h<0$) if and only if there exists $p \in C_X$ (resp., $p \in -C_X$) such that $ph \in C_X'$. 
\item $h\geq 0$ (resp., $h \leq 0$) if and only if there exists $p \in C_X$ (resp., $p \in -C_X$) such that $ph \in C_X\backslash\{0\}$.
\end{enumerate}
\end{lemma}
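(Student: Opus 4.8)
The plan is to reduce the four claims to two and dispatch the easy directions by hand, leaving the substantive directions to the Positivstellensatz. Replacing $h$ by $-h$ and $C_X$ by $-C_X$ interchanges the strict pair with its negative and the non-strict pair with its negative, so I would prove only the characterisations of $h>0$ and $h\geq 0$. For the ``if'' directions I would use only the definitions: every element of $C_X$ is a sum of terms $s_im_i$ ($s_i$ a square, $m_i$ a monomial) and so is $\geq 0$ on $\mathbb{R}^k_{\gg 0}$, while a nonzero element of $C_X$ cannot vanish on a nonempty open subset of the orthant; an element of $C_X'$ has in addition some $s_{i_0}$ a nonzero constant (necessarily positive, being a square), hence is bounded below by the strictly positive monomial $s_{i_0}m_{i_0}$ and is $>0$ on $\mathbb{R}^k_{\gg 0}$. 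Thus if $ph\in C_X'$ with $p\in C_X$ then $p\geq 0$ and $ph>0$ on the orthant, forcing $h>0$ pointwise; and if $ph\in C_X\backslash\{0\}$ with $p\in C_X$ then $h(x_0)<0$ at some interior point would give $h<0$ on a neighbourhood, forcing $p\equiv 0$ there and hence $ph\equiv 0$, a contradiction, so $h\geq 0$.

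For the ``only if'' directions I would appeal to the emptiness form of the Krivine--Stengle Positivstellensatz (\cite{BCR}, Theorem~4.2.2), being careful to encode positivity on the \emph{open} orthant by strict inequalities. For part~(1), $h>0$ on $\mathbb{R}^k_{\gg 0}$ is equivalent to the emptiness of $\{x\colon x_1>0,\ldots,x_k>0,\,-h\geq 0\}$. Writing each $x_i>0$ as $x_i\geq 0$ together with $x_i\neq 0$, the preordering generated by $\{X_1,\ldots,X_k,-h\}$ is $C_X+(-h)C_X$, and the multiplicative monoid supplied by the strict constraints is exactly $M_X$. The theorem then produces $a,b\in C_X$ and a monomial $m\in M_X$ with $(a-hb)+m^2=0$, that is,
\[
bh=a+m^2.
\]
Since $m^2=1\cdot m^2$ displays a nonzero constant coefficient, $a+m^2\in C_X'$, and $p:=b$ is the required element.

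For part~(2) I would instead use that $h\geq 0$ on $\mathbb{R}^k_{\gg 0}$ is equivalent to the emptiness of $\{x\colon x_1>0,\ldots,x_k>0,\,-h>0\}$. Now $-h$ is strict as well, so it too enters the monoid, and the certificate takes the form $(a-hb)+\bigl(m(-h)^j\bigr)^2=0$ with $a,b\in C_X$, $m\in M_X$, $j\geq 0$; hence $bh=a+(X^\gamma h^j)^2$ for the appropriate exponent vector $\gamma$. The added term is a single square times the monomial $1$, so $bh\in C_X$, and it is not identically zero because $h\not\equiv 0$ keeps the square term nonzero. Thus $bh\in C_X\backslash\{0\}$ and again $p:=b$ works.

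The main obstacle is the strict-versus-non-strict bookkeeping, not any hard inequality. Working on the open orthant is exactly what forces the strong, strict-inequality version of the Positivstellensatz rather than its familiar closed-set form, and it is the squared monoid term $m^2$ arising from the constraints $x_i>0$ that carries the certificate into $C_X'$ (with a general monomial, not a bare constant). This is the structural reason the refined cone $C_X'$, and not merely $C_X$ augmented by a constant term, appears in part~(1). The remaining verifications---that the products of generators collapse to $C_X+(-h)C_X$, and that the squared monoid elements are genuinely squares times monomials and so lie in $C_X$---are routine, but are where one must be attentive.
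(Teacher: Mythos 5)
Your proposal is correct and follows essentially the same route as the paper: the easy implications via pointwise nonnegativity of $C_X$ (and strict positivity of $C_X'$) on the open orthant, and the hard implications via the Krivine--Stengle Positivstellensatz applied to the infeasibility of $x_i>0,\,-h\geq 0$ (resp.\ $x_i>0,\,-h>0$), yielding certificates $bh=a+X^{2\beta}$ and $bh=a+h^{2j}X^{2\beta}$ exactly as in the paper's proof. The symmetry reduction $h\mapsto -h$ is also how the paper dispatches the negative cases.
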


\begin{proof}
We treat the cases where $h>0$ or $h\geq 0$; the cases where $h<0$ or $h \leq 0$ are similar. 
\begin{enumerate}
\item ($\Leftarrow$) Suppose there exists $p \in C_X$ such that $ph =q \in C_X'$. Then, since $q>0$ and $p \geq 0$, it follows that $p>0$ and, consequently, $h=q/p>0$. ($\Rightarrow$) Suppose that $h > 0$, namely that the system $x_i \geq 0$, $-h(x)\geq 0$, $x_i \neq 0$ is infeasible. In this case, according to the Positivstellensatz, there exist squares $s_i, t_j \in S$, and integer vectors $a_i, b_j, b \in \mathbb{Z}^k_{\geq 0}$ such that:
\[
h(X)\left[\sum_j t_j(X) X^{b_j}\right] = \left[\sum_i s_i(X) X^{a_i}\right] + X^{2b}\,.
\]
The LHS is clearly of the form $ph$ where $p \in C_X$, while the RHS clearly belongs to $C_X'$. 

\item ($\Leftarrow$) Suppose there exist $p \in C_X$, $q \in C_X\backslash\{0\}$ such that $ph =q$. Suppose, contrary to the claim, that $h(y)<0$ for some $y \in \mathbb{R}^k_{\gg 0}$. By continuity of $h$, $h(y') < 0$ for all $y'$ in some open neighbourhood of $U$ of $y$ in $\mathbb{R}^k_{\gg 0}$; consequently $ph\leq 0$ on $U$. Since $ph = q \neq 0$ on $U$,  $(ph)(y') < 0$ for some $y' \in U$, a contradiction. ($\Rightarrow$) Suppose $h$ is nonnegative, namely, the system $x_i \geq 0$, $x_i \neq 0$, $-h(x) \geq 0$, $h(x) \neq 0$ (or, more briefly, $x_i >0$, $-h(x) > 0$) is infeasible. In this case, according to the Positivstellensatz, there exist squares $s_i, t_j \in S$, integer vectors $a_i, b_j, b \in \mathbb{Z}^k_{\geq 0}$, and $b' \in \mathbb{Z}_{\geq 0}$ such that:
\[
h(X)\left[\sum_j t_j(X) X^{b_j}\right] =\left[\sum_i s_i(X) X^{a_i}\right] + h^{2b'}X^{2b}\,.
\]
Clearly the LHS is of the form $ph$ where $p \in C_X$, while the RHS is in $C_X \backslash\{0\}$.

\end{enumerate}
This completes the proof.
\end{proof}

Following \cite{Parrilo03}, Theorem~5.1, the problem of deciding whether a given semialgebraic system is feasible can be reduced to a family of semidefinite programs (SDPs) which can be implemented using various packages -- in our case CSDP \cite{csdp} was used. So, for example, the problem of deciding if a given polynomial $h$ belongs to the cone $C_X$ of Definition~\ref{defpolycone} is an SDP, and the same is true for deciding if any polynomial multiple $ph$ belongs to $C_X$ for $0 \neq p$ up to some fixed degree. If the problem is infeasible, then a certificate to this effect is returned by the SDP. For example, the claim of Example~\ref{exharder} that $-\mathrm{det}^{[2]}\,M$ does not belong to $C_X$ was verified in this way. 

On the other hand, for a given positive integer $r$ if there exist $p\neq 0$ of degree $\leq r$ and $q \in C_X$ such that $ph = q$, then SDP will (subject to practical limitations) return a certificate to this effect. This certicate can be used to explicity construct $p$ and $q$ and sometimes, perhaps with some trial and error, we can find $p$ and $q$ with integer coefficients: the polynomials appearing in the proofs of Lemmas~\ref{lem5pat2}~and~\ref{lem5pat3} below were obtained in this way. 

\section{Sign patterns: basic theory}
\label{secsignpat}

\subsection{Combinatorial structure of sign patterns}

Given an $n$-pattern $M$, changing the signs of some entries in $M$ is referred to as {\em re-signing} $M$. The same terminology is used when changing some edge-signs in $G_M$. An equivalence class of sign patterns under re-signing is termed a {\em zero pattern}. Zero patterns are associated with ordinary digraphs, rather than signed digraphs. Given a sign pattern $M$ with signed digraph $G_M$, $\overline{M}$ (resp., $G_{\overline M}$) refers to the associated zero pattern (resp., unsigned digraph).

{\bf Isomorphism and sub-patterns}. Here, an $m$-pattern $M$ and an $n$-pattern $N$ are termed {\em isomorphic} if $m=n$ and there is a vertex-relabelling of $G_N$ which gives either $G_M$ or $G_{M^T}$. The same notion of isomorphism extends to zero patterns. An equivalence classes of isomorphic sign patterns (resp., zero patterns) is termed an unlabelled sign pattern (resp., zero pattern). Given $n$-patterns $M$ and $N$, $M$ is a {\em sub-pattern} of $N$, written $M \leq N$, if $M$ is obtained by replacing some subset of the indeterminates in $N$ with zeros. $M \leq N$ implies that (i) $\mathrm{det}^{[2]}\,M \subseteq \mathrm{det}^{[2]}\,N$; (ii) $\mathcal{Q}_M$ is a subset of the closure of $\mathcal{Q}_N$, denoted $\mathrm{cl}\,\mathcal{Q}_N$; and (iii) that $G_M$ is a subgraph of $G_N$. It is clear that $\leq$ is a partial order on sign patterns and associated objects, both labelled and unlabelled.

{\bf Cycles, hoopings and weak reversibility}. A {\em cycle} in a digraph $G$ means a directed cycle. Cycles of length $n$ are termed $n$-cycles. $1$-cycles are termed {\em loops}, while $3$-cycles are termed {\em triangles}. A set of cycles in a digraph are {\em coincident} if they share a vertex, and {\em disjoint} otherwise. Following \cite{soule}, a {\em hooping} on $V' \subseteq V(G)$ is a union of pairwise disjoint cycles of $G$ covering $V'$. Hoopings correspond to permutations of $V'$. Borrowing terminology from chemical reaction network theory (\cite{hornjackson} for example), a digraph $G$ is {\em weakly reversible} if each of its connected components is strongly connected. Equivalently, each arc of $G$ belongs to some cycle. A sign pattern or zero pattern whose associated digraph is weakly reversible will be termed weakly reversible.

{\bf Minor-sums}. A set of edges $E$ in a signed digraph is referred to as {\em odd} (resp., {\em even}) if $E$ includes an odd (resp., even) number of positive edges. We write $\mathrm{par}(E)=-1$ when $E$ is odd and $\mathrm{par}(E)=1$ when $E$ is even. A cycle is odd (resp., even) if its edge-set is odd (resp., even). Given an $n \times n$ matrix $M$, let $J_i$ ($i=1, \ldots, n$) be its $i$th minor-sum. Terms in $J_i$ are in one-to-one correspondence with hoopings on subsets of the vertices of $G_M$ of size $i$. If $M$ is an $n$-pattern, a hooping on $i$ vertices involving edges $E$ and associated with a permutation $\sigma$ corresponds to a term in $J_i$ of sign $\mathrm{par}(E)\mathrm{sgn}(\sigma)(-1)^i$. In particular, odd (resp., even) cycles correspond to positive (resp., negative) terms in minors. Some further detail and more general discussion are in \cite{banajirutherford1}: the case $k=1$ in that paper corresponds to the situation here. 

\begin{remark}
\label{remWR}
Entries in $M$ corresponding to arcs of $G_M$ which appear in no cycles do not appear in any $J_i$, and hence do not appear in $\mathrm{det}^{[2]}\,M$. Thus, when discussing $\mathrm{det}^{[2]}\,M$, we may disregard such entries and focus on the maximal weakly reversible sub-pattern of $M$. 
\end{remark}

\subsection{{$[2]$}-positivity and related notions} 
An $n$-pattern $M$ is termed ``$[2]$-positive'' if $\mathrm{det}^{[2]}\,M \geq 0$. The notions of $[2]$-negative, $[2]$-zero, $[2]$-nonzero, $[2]$-nonnegative, and $[2]$-nonpositive are defined analogously. An $n$-pattern which is $[2]$-negative or $[2]$-positive is $[2]$-definite; one which is $[2]$-nonnegative or $[2]$-nonpositive is $[2]$-semidefinite; and one which is neither $[2]$-nonnegative nor $[2]$-nonpositive is $[2]$-indefinite. 

\begin{lemma}[Inheritance]
\label{leminherit}
Let $M, N$ be $n$-patterns with $M \leq N$. (i) If $\mathrm{det}^{[2]}\,M \not \leq 0$, then $\mathrm{det}^{[2]}\,N \not \leq 0$. (ii) If $\mathrm{det}^{[2]}\,M \not \geq 0$, then $\mathrm{det}^{[2]}\,N \not \geq 0$. (iii) If $\mathrm{det}^{[2]}\,N = 0$, then $\mathrm{det}^{[2]}\,M = 0$. (iv) If $\mathrm{det}^{[2]}\,M \gtrless 0$, then $\mathrm{det}^{[2]}\,N \gtrless 0$. 
\end{lemma}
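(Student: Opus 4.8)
The plan is to reduce all four claims to a single continuity observation. Since $M \leq N$, write the variables of $N$ as $X = (X_1, \ldots, X_k)$ and let $S \subseteq \{1, \ldots, k\}$ index the indeterminates set to zero in passing from $N$ to $M$. Because substitution commutes with the polynomial operation $\mathrm{det}^{[2]}$, the two determinants are related by $\mathrm{det}^{[2]}M = (\mathrm{det}^{[2]}N)|_{X_i = 0,\, i \in S}$. Writing $p := \mathrm{det}^{[2]}M$ and $P := \mathrm{det}^{[2]}N$, this says precisely that $p$ is the sum of those terms of $P$ involving none of the variables $\{X_i : i \in S\}$, which is consistent with the relation $p \subseteq P$ already noted for sub-patterns; in particular, $p$ does not depend on the $S$-variables. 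Claim (iii) is then immediate: if $P \equiv 0$ every coefficient of $P$ vanishes, hence so does every coefficient of $p$, so $p \equiv 0$.

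For (i), suppose $p \not\leq 0$, so there is $x^* \in \mathbb{R}^k_{\gg 0}$ with $p(x^*) > 0$. For $\epsilon > 0$ define the point $y(\epsilon) \in \mathbb{R}^k_{\gg 0}$ agreeing with $x^*$ on the coordinates outside $S$ and equal to $\epsilon$ on the coordinates in $S$. As $\epsilon \to 0^+$, $y(\epsilon)$ converges to the point $\tilde{x}$ obtained from $x^*$ by zeroing its $S$-coordinates, and $P(\tilde{x}) = (\mathrm{det}^{[2]}N)|_{X_i=0}(x^*) = p(x^*)$. By continuity of $P$, $P(y(\epsilon)) \to p(x^*) > 0$, so $P(y(\epsilon)) > 0$ for all sufficiently small $\epsilon > 0$; since each such $y(\epsilon)$ lies in $\mathbb{R}^k_{\gg 0}$, this gives $P \not\leq 0$. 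Claim (ii) is proved identically, starting from a point where $p < 0$ and concluding $P \not\geq 0$.

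Finally (iv) follows formally from (i) and (ii): by definition $p \gtrless 0$ means $p$ is neither $\geq 0$ nor $\leq 0$, i.e. $p \not\geq 0$ and $p \not\leq 0$. Applying (i) yields $P \not\leq 0$ and applying (ii) yields $P \not\geq 0$, which is exactly the statement $P \gtrless 0$.

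I do not expect a genuine obstacle here; the one point requiring care is keeping the perturbed witness $y(\epsilon)$ inside the \emph{open} positive orthant while passing to the boundary limit, which is exactly why the $S$-coordinates are set to a strictly positive $\epsilon$ rather than to $0$. One could alternatively phrase (i)--(ii) in terms of the subpolynomial relation $p \subseteq P$ together with the face-polynomial machinery of Lemma~\ref{lemsgnindef1}, but the direct limit argument above is shorter and self-contained.
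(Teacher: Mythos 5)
Your proof is correct and takes essentially the same route as the paper: both arguments observe that the witness for $M$ lies in the closure of the parameter set for $N$ and perturb it into the interior using continuity, the only difference being that you phrase this via points of $\mathbb{R}^k_{\gg 0}$ and the substitution $X_i=0$ ($i\in S$) while the paper phrases it via matrices $M_1\in\mathcal{Q}_M\subseteq\mathrm{cl}\,\mathcal{Q}_N$ and continuity of $\mathrm{det}^{[2]}$ on $\mathbb{R}^{n\times n}$. The only cosmetic divergence is that you prove (iii) directly from the coefficient relation $\mathrm{det}^{[2]}M\subseteq\mathrm{det}^{[2]}N$, whereas the paper deduces it from (i) and (ii); both are immediate.
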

\begin{proof}
(i)  If $\mathrm{det}^{[2]}\,M \not \leq 0$, then there exists $M_1 \in \mathcal{Q}_M$ such that $\mathrm{det}^{[2]}\,M_1 > 0$. As $M_1 \in \mathrm{cl}\,\mathcal{Q}_N$, by continuity of $\mathrm{det}^{[2]}$, there exists $N_1 \in \mathcal{Q}_N$ (close to $M_1$) such that $\mathrm{det}^{[2]}\,N_1 > 0$. (ii) follows similarly to (i). (iii) and (iv) are immediate consequences of (i) and (ii).
\end{proof}

\begin{prop}
\label{propgen}
Let $n \geq 2$, and let $M$ be an $n$-pattern.

\begin{enumerate}
\item If $n \equiv 0$ or $1 \pmod 4$, then $\mathrm{det}^{[2]}\,(-M) = \mathrm{det}^{[2]}\,M$. If $n \equiv 2$ or $3 \pmod 4$, then $\mathrm{det}^{[2]}\,(-M) = -\mathrm{det}^{[2]}\,M$.

\item If $G_M$ is bipartite, then $\mathrm{det}^{[2]}\,M = 0$. 

\end{enumerate}
\end{prop}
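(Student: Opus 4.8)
The plan is to use two facts about $\mathrm{det}^{[2]}$: its spectral description $\mathrm{det}^{[2]}\,M = p_n(\lambda) = \prod_{i<j}(\lambda_i+\lambda_j)$ for part~(1), and the explicit minor-sum formula of Proposition~\ref{J2formula} for part~(2), the latter fed by the observation that a bipartite zero pattern forces $M$ and $-M$ to be similar. For part~(1), I would note that negating $M$ negates every eigenvalue, so that
\[
\mathrm{det}^{[2]}(-M) = \prod_{i<j}(-\lambda_i - \lambda_j) = (-1)^{\binom{n}{2}}\prod_{i<j}(\lambda_i+\lambda_j) = (-1)^{\binom{n}{2}}\mathrm{det}^{[2]}\,M,
\]
since $M^{[2]}$ has $\binom{n}{2}$ eigenvalues. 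It then remains to evaluate the parity of $\binom{n}{2} = n(n-1)/2$: writing $n = 4m+j$ and checking $j \in \{0,1,2,3\}$ shows $\binom{n}{2}$ is even precisely when $n \equiv 0,1 \pmod 4$ and odd when $n \equiv 2,3 \pmod 4$, which is exactly the stated dichotomy. This part is routine modular arithmetic once the eigenvalue formula is invoked.

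For part~(2), the key first step is that bipartiteness of $G_{\overline{M}}$ lets us relabel the vertices so that $M$ takes block anti-diagonal form $\begin{pmatrix} 0 & B \\ C & 0 \end{pmatrix}$, where both diagonal blocks vanish because no arc joins two vertices of the same colour class (in particular there are no loops). Conjugating by the constant diagonal matrix $D = \mathrm{diag}(I_p, -I_q)$ then gives $DMD^{-1} = -M$ as an identity of polynomial matrices. Hence $M$ and $-M$ have the same characteristic polynomial, so $J_k(M) = J_k(-M)$ for every $k$; but negating eigenvalues gives $J_k(-M) = (-1)^k J_k(M)$, whence $J_k = 0$ for every odd $k$.

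I would then finish via Proposition~\ref{J2formula}: the first column of the matrix $\mathcal{M}$ in (\ref{J2eqn}) is exactly $(J_1, J_3, J_5, \ldots)^T$, which is now the zero vector. Since $n \geq 2$, this zero column already lies inside the leading $(n-1)\times(n-1)$ block (for $n=2$ the block is simply $(J_1)=(0)$), so $\mathrm{det}^{[2]}\,M = \mathrm{det}\,\mathcal{M}_{n-1} = 0$. The step I would treat most carefully is the reduction to block anti-diagonal form, as the whole argument hinges on the two-colouring genuinely annihilating both diagonal blocks so that conjugation by $D$ truly yields $-M$; everything downstream is then immediate. As an alternative formula-free finish, one may observe that the symmetric spectrum always contains either a zero eigenvalue of multiplicity at least two or a genuine pair $\{\mu,-\mu\}$---indeed, since nonzero eigenvalues occur in $\pm$ pairs, the multiplicity of the zero eigenvalue has the same parity as $n$, forcing such a vanishing factor $\lambda_i+\lambda_j$ for every $n\geq 2$, so that $p_n = 0$.
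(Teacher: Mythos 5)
Your proof is correct, and both halves take a genuinely different route from the paper's. For part (1) the paper never touches the spectrum: it observes that $M\mapsto -M$ negates exactly the odd-indexed minor-sums $J_i$, which negates precisely the odd-numbered columns of the matrix $\mathcal{M}_{n-1}$ in Proposition~\ref{J2formula}, and then counts those columns modulo $2$; your eigenvalue computation $\prod_{i<j}(-\lambda_i-\lambda_j)=(-1)^{\binom{n}{2}}p_n$ is cleaner and does not need the determinantal formula at all (equivalently, it is just linearity of the additive compound, $(-M)^{[2]}=-M^{[2]}$). For part (2) the paper argues combinatorially: a hooping on an odd number of vertices must contain an odd directed cycle, which bipartiteness forbids, so $J_i=0$ for odd $i$ and the first column of $\mathcal{M}_{n-1}$ vanishes. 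Your similarity argument $DMD^{-1}=-M$ with $D=\mathrm{diag}(I_p,-I_q)$ reaches the same conclusion $J_{\mathrm{odd}}=0$ by pure linear algebra, and your ``formula-free finish'' (the spectrum is symmetric under negation, so some pair of eigenvalues sums to zero whenever $n\geq 2$) even bypasses Proposition~\ref{J2formula} entirely. What the paper's route buys is consistency with its combinatorial theme (the hooping description of minor-sums is reused throughout Section~\ref{secsignpat}); what yours buys is a structural explanation --- bipartiteness makes $M$ and $-M$ similar, so the whole characteristic polynomial is even --- which is arguably more illuminating. One incidental point: the paper asserts ``$G_M$ is bipartite if and only if it has no cycles of odd length,'' which for digraphs is only an implication in the direction actually used; your argument sidesteps this by working from the two-colouring directly.
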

\begin{proof}
(1) The transformation $M \mapsto -M$ sends $J_i \mapsto -J_i$ for odd $i$ and $J_i \mapsto J_i$ for even $i$. If $n \equiv 0$ or $1 \pmod 4$ (resp., $n \equiv 2$ or $3 \pmod 4$), this changes the sign of an even (resp., odd) number of columns in the determinantal formula of Proposition~\ref{J2formula}, and so $\mathrm{det}^{[2]}\,(-M) = \mathrm{det}^{[2]}\,M$ (resp., $\mathrm{det}^{[2]}\,(-M) = -\mathrm{det}^{[2]}\,M$).

(2) $G_M$ is bipartite if and only if it has no cycles of odd length. Consequently, as any hooping on an odd number of vertices must contain a cycle of odd length, $J_i=0$ for odd $i$, Hence, there is a column of zeros in the determinantal formula of Proposition~\ref{J2formula}, giving $\mathrm{det}^{[2]}\,M = 0$.
\end{proof}

\subsection{Obstructions}
\label{secobstruct}

Examining sign patterns we quickly find that the presence of certain structures has implications for the sign of $\mathrm{det}^{[2]}$. Most basic of such observations is:
\begin{prop}
\label{propgen1}
Let $n \geq 2$, and let $M$ be an $n$-pattern. 

\begin{enumerate}
\item[(i)] If $n\equiv 0 \pmod 8$ and $G_M$ includes an $(n-1)$-cycle, then $\mathrm{det}^{[2]}\,M \nleq 0$.
\item[(ii)] If $n\equiv 4 \pmod 8$ and $G_M$ includes an $(n-1)$-cycle, then $\mathrm{det}^{[2]}\,M \ngeq 0$.
\item[(iii)] If $n\equiv 1 \pmod 8$ and $G_M$ includes an $n$-cycle, then $\mathrm{det}^{[2]}\,M \nleq 0$.
\item[(iv)] If $n\equiv 5 \pmod 8$ and $G_M$ includes an $n$-cycle, then $\mathrm{det}^{[2]}\,M \ngeq 0$.
\end{enumerate}
\end{prop}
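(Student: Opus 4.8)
The plan is to use the Inheritance Lemma (Lemma~\ref{leminherit}) to reduce to the case in which $M$ \emph{is} the offending cycle. Given that $G_M$ contains an $(n-1)$-cycle (cases (i), (ii)) or an $n$-cycle (cases (iii), (iv)), let $M'\leq M$ be the sub-pattern obtained by zeroing every indeterminate not lying on the cycle. Then $G_{M'}$ consists of that single cycle (together with one isolated vertex in the $(n-1)$-cycle case), so its only hooping is the cycle itself. Consequently exactly one minor-sum of $M'$ is nonzero, and it is a single signed monomial: $J_{n-1}=\pm X_c$ in cases (i), (ii) and $J_n=\pm X_c$ in cases (iii), (iv), where $X_c$ is the product of the cycle's edge-variables. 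By Lemma~\ref{leminherit} it then suffices to show that $\mathrm{det}^{[2]}\,M'$ is positive (for (i), (iii)) or negative (for (ii), (iv)), since this forces $\mathrm{det}^{[2]}\,M\nleq 0$ or $\mathrm{det}^{[2]}\,M\ngeq 0$ respectively.

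To evaluate $\mathrm{det}^{[2]}\,M'=\mathrm{det}\,\mathcal{M}_{n-1}$ (Proposition~\ref{J2formula}) at this point, I would use that the $(i,j)$ entry of $\mathcal{M}$ is $J_{2i-j}$ (with $J_0=1$ and $J_k=0$ for $k<0$ or $k>n$). After the substitution the only surviving entries are the $1$'s where $2i-j=0$ together with the copies of the surviving minor-sum, namely the entries with $2i-j=n$ (resp. $2i-j=n-1$). A direct check shows this leaves exactly one nonzero entry in each row and each column of $\mathcal{M}_{n-1}$, so it is a generalized permutation matrix and $\mathrm{det}\,\mathcal{M}_{n-1}=\mathrm{sgn}(\pi)\,X_c^{\,r}$ for some permutation $\pi$ of $\{1,\dots,n-1\}$ and exponent $r$. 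Observing that the surviving column satisfies $j\equiv 2i$ modulo $n$ (resp. modulo $n-1$) identifies $\pi$ as multiplication by $2$ on the residues, while counting the factors gives $r=(n-1)/2$ in cases (iii), (iv) and $r=n/2$ in cases (i), (ii). In all four congruence classes $r$ is \emph{even}, so $(\pm X_c)^r=X_c^{\,r}>0$; thus the sign of the cycle is irrelevant and the sign of $\mathrm{det}^{[2]}\,M'$ equals $\mathrm{sgn}(\pi)$.

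The crux of the argument, and where I expect the main work to lie, is computing $\mathrm{sgn}(\pi)$. I would invoke Zolotarev's lemma: for odd $m$, the permutation $x\mapsto 2x$ of $\mathbb{Z}/m\mathbb{Z}$ has sign equal to the Jacobi symbol $\left(\frac{2}{m}\right)=(-1)^{(m^2-1)/8}$ (fixing or removing the residue $0$ does not affect the sign, which handles both parity cases). With $m=n$ this gives $\left(\frac{2}{n}\right)=+1$ when $n\equiv 1\pmod 8$ and $-1$ when $n\equiv 5\pmod 8$; with $m=n-1$ it gives $\left(\frac{2}{n-1}\right)=+1$ when $n\equiv 0\pmod 8$ (so $n-1\equiv 7$) and $-1$ when $n\equiv 4\pmod 8$ (so $n-1\equiv 3$). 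Hence $\mathrm{det}^{[2]}\,M'=+X_c^{\,r}>0$ in cases (i), (iii) and $\mathrm{det}^{[2]}\,M'=-X_c^{\,r}<0$ in cases (ii), (iv), and Lemma~\ref{leminherit} transfers these conclusions to $M$. (The exclusion of the remaining residues $n\equiv 2,3,6,7\pmod 8$ is natural: in each of these the exponent $r$ is odd, so $(\pm X_c)^r$ is sign-indefinite and this construction yields no obstruction.)
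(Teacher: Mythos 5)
Your proposal is correct and follows essentially the same route as the paper: restrict to the sub-pattern consisting of the single cycle (so exactly one minor-sum survives), evaluate the determinantal formula of Proposition~\ref{J2formula} there, and transfer the conclusion back via Lemma~\ref{leminherit}. The paper simply asserts the resulting values $\pm J_{n-1}^{n/2}$ and $\pm J_n^{(n-1)/2}$ without justification, whereas you supply the missing computation (generalized permutation matrix plus Zolotarev's lemma for the sign), which is a welcome elaboration rather than a deviation.
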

\begin{proof}
If $G_M$ consists of an $k$-cycle, then $J_k \neq 0$, while $J_i=0$ for $i \neq k$. Using the formula of Proposition~\ref{J2formula} we have: (i) If $n\equiv 0 \pmod 8$ and $G_M$ consists of an $(n-1)$-cycle, then $\mathrm{det}^{[2]}\,M = J_{n-1}^{n/2} > 0$. (ii) If $n\equiv 4 \pmod 8$ and $G_M$ consists of an $(n-1)$-cycle, then $\mathrm{det}^{[2]}\,M = -J_{n-1}^{n/2} < 0$. (iii) If $n\equiv 1 \pmod 8$ and $G_M$ consists of an $n$-cycle, then $\mathrm{det}^{[2]}\,M = J_{n}^{(n-1)/2}>0$. (iv) If $n\equiv 5 \pmod 8$ and $G_M$ consists of an $n$-cycle, then $\mathrm{det}^{[2]}\,M = -J_{n}^{(n-1)/2} < 0$. The result in each case follows by inheritance (Lemma~\ref{leminherit}(i)~and~(ii)).
\end{proof}

A variety of results along the lines of Proposition~\ref{propgen1} can be found. These amount to finding unsigned digraphs whose presence as subgraphs of $G_{\overline M}$ ensures that $\mathrm{det}^{[2]}\,M \ngeq 0$ or $\mathrm{det}^{[2]}\,M \nleq 0$. We term them {\em obstructions} to $[2]$-nonnegativity and to $[2]$-nonpositivity respectively. For example, according to Proposition~\ref{propgen1}(ii), a triangle is an obstruction to $[2]$-nonnegativity in $4$-patterns. 

When $n \equiv 2$ or $3 \pmod 4$, $\mathrm{det}^{[2]}\,(-M) = -\mathrm{det}^{[2]}\,M$ (Proposition~\ref{propgen}(1)), and so any obstruction to $[2]$-nonnegativity must also be an obstruction to $[2]$-nonpositivity, forcing any $n$-pattern including such an obstruction to be $[2]$-indefinite. For example, consider again the digraph in Example~\ref{exobstruct1a}. If $M$ is a $6$-pattern $M$ such that $G_{\overline M}$ includes a subgraph isomorphic to this one, then $M$ is $[2]$-indefinite, regardless of edge-signs. This can be confirmed by checking that all possible choices of signs lead to $\mathrm{det}^{[2]}\,M$ having mixed vertices for every sign pattern $M$ in this zero pattern and applying Lemmas~\ref{lemsgnindef}~and~\ref{leminherit}(iv). (The process of confirming this claim is most efficiently done with the help of the discussion of hoopings in Section~\ref{secsignpat} above.)

When $n \equiv 0$ or $1 \pmod 4$, obstructions to $[2]$-nonnegativity are not necessarily obstructions to $[2]$-nonpositivity, and vice versa. For example, by Proposition~\ref{J2formula}, a loop coincident with a $4$-cycle is an obstruction to $[2]$-nonnegativity in $5$-patterns (for this pattern, from the discussion on hoopings and minor-sums, $J_2=J_3=J_5=0$ while $J_1$ and $J_4$ are nonzero monomials, and so $q_5 = J_1^2J_4^2 > 0$). A slightly more exotic example in the same spirit is the following.
\begin{example}
\label{ex8pat}
Consider an $8$-pattern $M$ such that $G_{\overline{M}}$ is isomorphic to:
\begin{center}
\begin{tikzpicture}[scale=1.2]
%\fill[color=black!20] (-1.7,-1.2) -- (-1.7,0.2) -- (2.9,0.2) -- (2.9,-1.2) -- cycle;

\node at (-0.25,-0.5) {$\scriptstyle{\alpha}$};
\node at (0.4,-0.5) {$\scriptstyle{\beta}$};
\node at (2.04,-0.5) {$\scriptstyle{\gamma}$};

\fill[color=black] (-1.37,-0.5) circle (1.5pt);
\fill[color=black] (0,0) circle (1.5pt);
\fill[color=black] (-0.87,-0.5) circle (1.5pt);
\fill[color=black] (0.87,-0.5) circle (1.5pt);
\fill[color=black] (0,-1) circle (1.5pt);
\fill[color=black] (1.74,0) circle (1.5pt);
%\fill[color=black] (0.87,-0.5) circle (1.5pt);
\fill[color=black] (2.61,-0.5) circle (1.5pt);
\fill[color=black] (1.74,-1) circle (1.5pt);

\draw [-] (-0.1,-0.62) .. controls (-0.0,-0.82) and (0.3,-0.82) .. (0.4,-0.62);
\draw [<-] (-0.1,-0.38) .. controls (-0.0,-0.18) and (0.3,-0.18) .. (0.4,-0.38);

\begin{scope}[xshift=1.74cm]
\draw [-] (-0.2,-0.62) .. controls (-0.1,-0.82) and (0.2,-0.82) .. (0.3,-0.62);
\draw [<-] (-0.2,-0.38) .. controls (-0.1,-0.18) and (0.2,-0.18) .. (0.3,-0.38);
\end{scope}

\begin{scope}[xshift=-0.4cm, yshift=-0.2cm, scale=0.6]
\draw [-] (-0.2,-0.62) .. controls (-0.1,-0.82) and (0.2,-0.82) .. (0.3,-0.62);
\draw [<-] (-0.2,-0.38) .. controls (-0.1,-0.18) and (0.2,-0.18) .. (0.3,-0.38);
\end{scope}

\draw[->, thick] (-0.087,-0.05) -- (-0.78,-0.45);
\draw[<-, thick] (-0.087,-0.95) -- (-0.78,-0.55);
 
\draw[->, thick] (0.087,-0.95) -- (0.78,-0.55);
\draw[<-, thick] (0.087,-0.05) -- (0.78,-0.45);

\draw[-, black!20, line width=0.1cm] (0,-0.93) -- (0,-0.07);
\draw[->, thick] (0,-0.93) -- (0,-0.07);

\begin{scope}[xshift=1.74cm]
\draw[->, thick] (-0.087,-0.05) -- (-0.78,-0.45);
\draw[<-, thick] (-0.087,-0.95) -- (-0.78,-0.55);
 
\draw[->, thick] (0.087,-0.95) -- (0.78,-0.55);
\draw[<-, thick] (0.087,-0.05) -- (0.78,-0.45);

\end{scope}

\end{tikzpicture}
\end{center}
The cycles in this digraph consist of a triangle (labelled $\alpha$) and a pair of $4$-cycles (labelled $\beta$ and $\gamma$). Abusing terminology, and denoting the signed monomials associated with the three cycles also by $\alpha, \beta$ and $\gamma$, from the discussion on hoopings above we see that $J_1=J_2=J_5=J_6=J_8=0$, while $J_3=\alpha$, $J_4=\beta+\gamma$ and $J_7=\alpha\gamma$. The determinantal formula for $q_8$ reduces to $J_7^2(J_7-J_3J_4)^2 = \alpha^4\gamma^2\beta^2 > 0$. Thus this digraph is an obstruction to $[2]$-nonpositivity in $8$-patterns.
\end{example}

\section{Sign patterns: low dimensional cases}
\label{secsignpat1}

We present some results on $n$-patterns for $2 \leq n \leq 5$. 

$2$-patterns are trivial. For a $2$-pattern $M$, $\mathrm{det}^{[2]}\,M = \mathrm{Tr}\,M$. Consequently $\mathrm{det}^{[2]}\,M=0$ if and only if $G_M$ has no loops; $\mathrm{det}^{[2]}\,M>0$ (resp., $\mathrm{det}^{[2]}\,M<0$) if and only if $G_M$ has a loop and all loops of $G_M$ are odd (resp., even); and $\mathrm{det}^{[2]}\,M \gtrless 0$ if and only if $G_M$ has one odd loop and one even loop. $n=2$ is special, being the only case where the formula for $\mathrm{det}^{[2]}\,M$ does not involve all $J_i$ ($i=1, \ldots, n$).

\subsection{$3$-patterns}

$n=3$ is the first nontrivial case, but it turns out that $3$-patterns have various properties not shared in higher dimensions. Most important of these is the following:
\begin{lemma}
\label{lem3verts}
Let $M$ be a $3$-pattern. Then either $\mathrm{det}^{[2]}\,M >0$, $\mathrm{det}^{[2]}\,M < 0$, $\mathrm{det}^{[2]}\,M = 0$, or $\mathrm{det}^{[2]}\,M \gtrless 0$. Moreover, provided $\mathrm{det}^{[2]}\,M \neq 0$, $\mathrm{det}^{[2]}\,M >0$ (resp., $\mathrm{det}^{[2]}\,M <0$) if and only if all terms of $\mathrm{det}^{[2]}\,M$ are positive (resp., negative). Consequently, $\mathrm{det}^{[2]}\,M \gtrless 0$ if and only if $\mathrm{det}^{[2]}\,M$ has mixed terms.
\end{lemma}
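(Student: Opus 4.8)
The plan is to reduce the statement to a Newton-polytope computation, exploiting the fact that for $n=3$ the polynomial $\mathrm{det}^{[2]}\,M = q_3 = J_1J_2 - J_3$ (Proposition~\ref{J2formula}) is \emph{almost} as algebraically trivial as $\mathrm{det}\,M$ is in the SNS case (Remark~\ref{remSNS}). First I would expand $q_3$ in the nine entries of $M$, writing the diagonal (loop) entries as $a,e,i$ and the off-diagonal entries as $b,c,d,f,g,h$:
\begin{align*}
J_1J_2 - J_3 = {}& a^2e + a^2i + ae^2 + e^2i + ai^2 + ei^2 + 2aei \\
{}&- abd - bde - acg - cgi - efh - fhi - bfg - cdh,
\end{align*}
the three ``loop times disjoint $2$-cycle'' products $afh,ceg,bdi$ having cancelled against the corresponding terms of $J_3$. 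Since distinct matrix positions carry distinct signed indeterminates in a sign pattern, this expansion passes monomial-by-monomial to $\mathrm{det}^{[2]}\,M\in\mathbb{R}[X]$: each entry-monomial becomes a single $X$-monomial whose sign is the product of the relevant entry-signs, with no further cancellation or coincidence.

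Next I would analyse the Newton polytope of the generic polynomial above in exponent space, aiming for the structural claim that \emph{every} one of its monomials is a vertex of $N(J_1J_2-J_3)$ \emph{with the single exception of $aei$}. Each off-diagonal monomial $abd,\dots,cdh$ is exposed by an explicit linear functional (e.g.\ $10(\deg_b+\deg_d)+\deg_a$ singles out $abd$), and by the $S_3$-symmetry permuting the vertices one functional per orbit suffices. The six ``squared-loop'' monomials are precisely the support points on the face where the total off-diagonal degree attains its minimum $0$; on that face they form the vertices of a hexagon, while $aei=(1,1,1)$ is its \emph{centroid} and hence interior to the face. Thus the six are vertices of $N(q_3)$ (vertices of a face are vertices of the polytope) while $aei$ is not. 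By Remark~\ref{remsubpoly}, setting entries of $M$ to zero or flipping entry-signs leaves this intact: any surviving monomial that was a vertex stays a vertex, so for \emph{any} $3$-pattern the only term of $\mathrm{det}^{[2]}\,M$ that can fail to be a vertex term is $X_aX_eX_i$.

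The main obstacle is exactly this exceptional term. Because $aei$ is an interior point of $N(q_3)$, the clean argument ``mixed terms $\Rightarrow$ mixed vertices $\Rightarrow$ sign-indefinite'' does not apply directly, and I must rule out a lone dissenting $aei$. I would resolve this by sign bookkeeping. Denote by $\epsilon_a,\epsilon_e,\epsilon_i$ the signs of the three loop-entries. The term $X_aX_eX_i$ is present exactly when all three loops are present, and in that case the six squared-loop monomials are present too; moreover the term from $a^2e$ has sign $\epsilon_e$, that from $ae^2$ has sign $\epsilon_a$, and that from $a^2i$ has sign $\epsilon_i$, whereas the term from $aei$ has sign $\epsilon_a\epsilon_e\epsilon_i$. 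Hence if the vertex terms all share a common sign $s$, then $\epsilon_a=\epsilon_e=\epsilon_i=s$, and so $X_aX_eX_i$ inherits sign $s^3=s$; the non-vertex term can never be the sole term of a different sign.

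Assembling the pieces gives the lemma. If $\mathrm{det}^{[2]}\,M$ has no terms it is $0$; if all its terms share a sign it is a positive (resp.\ negative) combination of monomials and so $\mathrm{det}^{[2]}\,M>0$ (resp.\ $<0$) on $\mathbb{R}^k_{\gg0}$. If it has mixed terms, then its vertex terms must already be mixed: when $aei$ is absent all terms are vertex terms, and when $aei$ is present the previous paragraph forbids the vertex terms from sharing a common sign. In either case $\mathrm{det}^{[2]}\,M$ has mixed vertices and so is sign-indefinite by Lemma~\ref{lemsgnindef}. These three mutually exclusive cases yield the four-way classification, and the ``moreover'' and ``consequently'' clauses follow immediately by contraposition.
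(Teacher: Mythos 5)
Your proof is correct and follows essentially the same route as the paper's: expand $q_3=J_1J_2-J_3$ in the entries, identify $x_{11}x_{22}x_{33}$ as the unique non-vertex monomial, handle that exceptional term by the same sign bookkeeping on the squared-loop terms, and conclude via Lemma~\ref{lemsgnindef} together with Remark~\ref{remsubpoly}. The only difference is that you establish the Newton-polytope claim by hand (exposing functionals plus the hexagon/centroid observation), where the paper verifies it with SageMath's {\tt newton\_polytope} function --- a small but genuine improvement in self-containedness.
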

\begin{proof}
Let $M$ be a $3 \times 3$ matrix with $ij$th entry $x_{ij}$. 
%\[
%M = \left(\begin{array}{ccc}x_{11}&x_{12}&x_{13}\\x_{21}&x_{22}&x_{23}\\x_{31}&x_{32}&x_{33}\end{array}\right)\,.
%\]
Then 
\begin{eqnarray*}
\mathrm{det}^{[2]}\,M &=& 
2x_{11}x_{22}x_{33}+x_{11}^2x_{22}+x_{11}x_{22}^2+x_{11}^2x_{33}+x_{11}x_{33}^2+x_{22}^2x_{33}+x_{22}x_{33}^2\\&&-x_{12}x_{23}x_{31}-x_{13}x_{21}x_{32}-x_{11}x_{12}x_{21}-x_{11}x_{13}x_{31}\\&&-x_{12}x_{21}x_{22}-x_{22}x_{23}x_{32}-x_{13}x_{31}x_{33}-x_{23}x_{32}x_{33}\,.
\end{eqnarray*}
With the help of the {\tt newton\_polytope} function in SageMath \cite{sagemath} we confirm that $x_{11}x_{22}x_{33}$ is the unique non-vertex monomial in $\mathrm{det}^{[2]}\,M$. 
%Calculation:
%A.<x11,x12,x13,x21,x22,x23,x31,x32,x33> = QQ[]
%P=x22*x33^2+x11*x33^2-x23*x32*x33-x13*x31*x33+x22^2*x33+2*x11*x22*x33+x11^2*x33-x22*x23*x32-x13*x21*x32-x12*x23*x31-x11*x13*x31+x11*x22^2-x12*x21*x22+x11^2*x22-x11*x12*x21
%Q=P.newton_polytope().vertices()
Consider a $3$-pattern  $N \leq M$ obtained by choosing some subset $S$ (possibly empty) of the variables $x_{ij}$, and replacing variables not in $S$ with zeros. $P:=\mathrm{det}^{[2]}\,N \subseteq \mathrm{det}^{[2]}\,M$ is obtained from $\mathrm{det}^{[2]}\,M$ by setting variables not in $S$ to zero. Let $N'$ be obtained from $N$ by re-signing some subset $S' \subseteq S$ (possibly empty) of the variables in $N$ so that $P':=\mathrm{det}^{[2]}\,N'$ is obtained from $P$ by re-signing variables in $S'$. Each possible $3$-pattern upto isomorphism can be obtained in this way. If $P=0$, then $P'=0$. Otherwise:
\begin{enumerate}
\item If $S$ does not include $x_{11}$, $x_{22}$ and $x_{33}$, then all terms in $P'$ are vertex terms (Remark~\ref{remsubpoly}).
\item If $S$ includes $x_{11}$, $x_{22}$ and $x_{33}$ then $x_{11}x_{22}x_{33}$ is the unique non-vertex monomial of $P'$ (Remark~\ref{remsubpoly}). If all vertices of $P'$ are positive (resp., negative) then, in particular, each of $x_{11}x_{22}^2$, $x_{22}x_{33}^2$ and $x_{33}x_{11}^2$ has positive (resp., negative) coefficient in $P'$, and so $x_{11}$, $x_{22}$ and $x_{33}$ do not belong (resp., do belong) to $S'$. Consequently, $x_{11}x_{22}x_{33}$ has positive (resp., negative) coefficient in $P'$. 
\end{enumerate}
In both cases above, $P'$ has a positive (resp., negative) term if and only if it has a positive (resp., negative) vertex term. By Lemma~\ref{lemsgnindef}, $P'$ is either positive, negative or sign-indefinite, being positive (resp., negative, resp., sign-indefinite) if and only if its terms are all positive (resp., all negative, resp., mixed).
\end{proof}

\begin{remark}
\label{remverts}
According to the proof of Lemma~\ref{lem3verts}, for a $3$-pattern $M$, $\mathrm{det}^{[2]}\,M \gtrless 0$ if and only if $\mathrm{det}^{[2]}\,M$ has mixed vertices. Exhaustive search confirms that this also holds true for $4$-patterns. It is currently unknown whether this conclusion holds for an arbitrary $n$-pattern. This question is discussed briefly in the concluding section. 
\end{remark}

A complete combinatorial characterisation of $3$-patterns is fairly straightforward. The following digraphs are needed.
\begin{center}
\begin{tikzpicture}[scale=1.2]
\draw[color=black!20] (-0.5,-0.5) -- (-0.5,1) -- (1.5,1) -- (1.5,-0.5) -- cycle;

\node at (-0.2,0.7) {$\mathrm{(3a)}$};

\fill[color=black] (0,0) circle (1.5pt);
\fill[color=black] (1,0) circle (1.5pt);

%% Basic upwards loop
\draw [-, thick] (0.05,0.05) .. controls (0.25,0.25) and (0.1,0.4) .. (0,0.4);
\draw [-, thick] (-0.05,0.05) .. controls (-0.25,0.25) and (-0.1,0.4) .. (0,0.4);
\begin{scope}[xshift=1cm]
\draw [-, thick] (0.05,0.05) .. controls (0.25,0.25) and (0.1,0.4) .. (0,0.4);
\draw [-, thick] (-0.05,0.05) .. controls (-0.25,0.25) and (-0.1,0.4) .. (0,0.4);
\end{scope}
\end{tikzpicture}
%Pic 2
\begin{tikzpicture}[scale=1.2]
\draw[color=black!20] (-0.4,-0.7) -- (-0.4,0.8) -- (1.6,0.8) -- (1.6,-0.7) -- cycle;

\node at (-0.1,0.5) {$\mathrm{(3b)}$};

\fill[color=black] (0,0) circle (1.5pt);
\fill[color=black] (1,0) circle (1.5pt);
%\fill[color=black] (2,0) circle (1.5pt);

%% 
\begin{scope}[xshift=1cm]
\draw [-, thick] (0.05,0.05) .. controls (0.25,0.25) and (0.4,0.1) .. (0.4,0);
\draw [-, thick] (0.05,-0.05) .. controls (0.25,-0.25) and (0.4,-0.1) .. (0.4,0);
\end{scope}

\draw [->, thick] (0.1,0.05) .. controls (0.4,0.15) and (0.6,0.15) .. (0.9,0.05);
\draw [->, thick] (0.9,-0.05) .. controls (0.6,-0.15) and (0.4,-0.15) .. (0.1,-0.05);
\end{tikzpicture}
\begin{tikzpicture}[scale=1.2]
\draw[color=black!20] (-1.1,-1.2) -- (-1.1,0.3) -- (0.8,0.3) -- (0.8,-1.2) -- cycle;

\node at (-0.8,0) {$\mathrm{(3c)}$};

\fill[color=black] (0,0) circle (1.5pt);
\fill[color=black] (-0.5,-0.87) circle (1.5pt);
\fill[color=black] (0.5,-0.87) circle (1.5pt);

\draw[->, thick] (-0.05,-0.087) -- (-0.45,-0.8); 
\draw[<-, thick] (0.05,-0.087) -- (0.45,-0.8); 
\draw[->, thick] (-0.4,-0.87) -- (0.4,-0.87); 

\end{tikzpicture}
\begin{tikzpicture}[scale=1.2]
\draw[color=black!20] (-0.4,-0.6) -- (-0.4,0.9) -- (2.3,0.9) -- (2.3,-0.6) -- cycle;
%\node at (-0.3,0.5) {$\mathrm{(iv)}$};

\node at (-0.1,0.6) {$\mathrm{(3d)}$};

\fill[color=black] (0,0) circle (1.5pt);
\fill[color=black] (1,0) circle (1.5pt);
\fill[color=black] (2,0) circle (1.5pt);

\draw [<-, thick] (0.1,0.05) .. controls (0.4,0.15) and (0.6,0.15) .. (0.9,0.05);
\draw [<-, thick] (0.9,-0.05) .. controls (0.6,-0.15) and (0.4,-0.15) .. (0.1,-0.05);

\begin{scope}[xshift=1cm] %upwards
\draw [->, thick] (0.1,0.05) .. controls (0.4,0.15) and (0.6,0.15) .. (0.9,0.05);
\draw [->, thick] (0.9,-0.05) .. controls (0.6,-0.15) and (0.4,-0.15) .. (0.1,-0.05);
\end{scope}
\end{tikzpicture}
\begin{tikzpicture}[scale=1.2]
\draw[color=black!20] (-1,-1.2) -- (-1,0.3) -- (0.8,0.3) -- (0.8,-1.2) -- cycle;
\node at (-0.7,0) {$\mathrm{(3e)}$};

\fill[color=black] (0,-0.25) circle (1.5pt);
\fill[color=black] (-0.5,-0.87) circle (1.5pt);
\fill[color=black] (0.5,-0.87) circle (1.5pt);

\begin{scope}[yshift=-0.25cm] %upwards
\draw [-, thick] (0.05,0.05) .. controls (0.25,0.25) and (0.1,0.4) .. (0,0.4);
\draw [-, thick] (-0.05,0.05) .. controls (-0.25,0.25) and (-0.1,0.4) .. (0,0.4);
\end{scope}

\begin{scope}[xshift=-0.5cm,yshift=-0.87cm]
\draw [->, thick] (0.1,0.05) .. controls (0.4,0.15) and (0.6,0.15) .. (0.9,0.05);
\draw [->, thick] (0.9,-0.05) .. controls (0.6,-0.15) and (0.4,-0.15) .. (0.1,-0.05);
\end{scope}
\end{tikzpicture}
\begin{tikzpicture}[scale=1.2]
\draw[color=black!20] (-1.1,-1.2) -- (-1.1,0.3) -- (1.1,0.3) -- (1.1,-1.2) -- cycle;
\fill[color=black] (0,-0.3) circle (1.5pt);
\fill[color=black] (-0.4,-0.87) circle (1.5pt);
\fill[color=black] (0.4,-0.87) circle (1.5pt);
\node at (-0.7,0) {$\mathrm{(3f)}$};

%Loop right
\begin{scope}[xshift=0.4cm,yshift=-0.87cm]
\draw [-, thick] (0.05,0.05) .. controls (0.25,0.25) and (0.4,0.1) .. (0.4,0);
\draw [-, thick] (0.05,-0.05) .. controls (0.25,-0.25) and (0.4,-0.1) .. (0.4,0);
\end{scope}
%loop up
\begin{scope}[yshift=-0.3cm]
\draw [-, thick] (0.05,0.05) .. controls (0.25,0.25) and (0.1,0.4) .. (0,0.4);
\draw [-, thick] (-0.05,0.05) .. controls (-0.25,0.25) and (-0.1,0.4) .. (0,0.4);
\end{scope}
%loop left
\begin{scope}[xshift=-0.4cm,yshift=-0.87cm]
\draw [-, thick] (-0.05,0.05) .. controls (-0.25,0.25) and (-0.4,0.1) .. (-0.4,0);
\draw [-, thick] (-0.05,-0.05) .. controls (-0.25,-0.25) and (-0.4,-0.1) .. (-0.4,0);
\end{scope}

\end{tikzpicture}
\end{center}

\begin{prop}
\label{prop33}
Let $M$ be a $3$-pattern, and $G_M$ the associated digraph. Then 

(i) $\mathrm{det}^{[2]}\,M=0$ if and only if $G_{\overline M}$ has no subgraph isomorphic to (3a), (3b) or (3c). \\
(ii) $\mathrm{det}^{[2]}\,M\gtrless 0$ if and only if $G_M$ contains a subgraph isomorphic to one of the following:
\begin{enumerate}
\item[(S1)] A pair of loops of opposite parity (Figure~\ref{fig33}(i));
\item[(S2)] A pair of loops and an even $2$-cycle (in any configuration, e.g., Figure~\ref{fig33}(ii),~(iii));
\item[(S3)] A pair of $2$-cycles of opposite parity and a loop coincident with both $2$-cycles (e.g., Figure~\ref{fig33}(iv));
\item[(S4)] A pair of loops and a triangle, all of the same parity (e.g., Figure~\ref{fig33}(v)).
\item[(S5)] A coincident loop $C_1$ and $2$-cycle $C_2$, and a triangle $C_3$ with the opposite parity to $C_1\cup C_2$ (e.g., Figure~\ref{fig33}(vi)).
\item[(S6)] Two triangles of opposite parity (e.g., Figure~\ref{fig33}(vii)).
\end{enumerate}

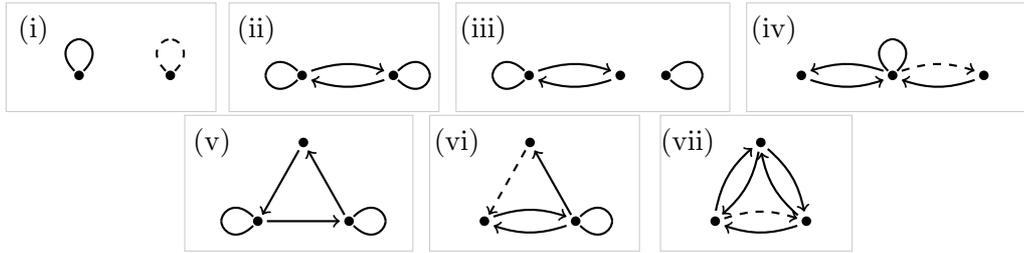
\begin{figure}[h]
\begin{center}
%Pic 1
\begin{tikzpicture}[scale=1.2]
\draw[color=black!20] (-0.8,-0.4) -- (-0.8,0.8) -- (1.5,0.8) -- (1.5,-0.4) -- cycle;

\node at (-0.5,0.5) {$\mathrm{(i)}$};

\fill[color=black] (0,0) circle (1.5pt);
\fill[color=black] (1,0) circle (1.5pt);

%% Basic upwards loop
\draw [-, thick] (0.05,0.05) .. controls (0.25,0.25) and (0.1,0.4) .. (0,0.4);
\draw [-, thick] (-0.05,0.05) .. controls (-0.25,0.25) and (-0.1,0.4) .. (0,0.4);
\begin{scope}[xshift=1cm]
\draw [-, thick, dashed] (0.05,0.05) .. controls (0.25,0.25) and (0.1,0.4) .. (0,0.4);
\draw [-, thick, dashed] (-0.05,0.05) .. controls (-0.25,0.25) and (-0.1,0.4) .. (0,0.4);
\end{scope}
\end{tikzpicture}
%Pic 2
\begin{tikzpicture}[scale=1.2]
\draw[color=black!20] (-0.8,-0.4) -- (-0.8,0.8) -- (1.5,0.8) -- (1.5,-0.4) -- cycle;

\node at (-0.5,0.5) {$\mathrm{(ii)}$};

\fill[color=black] (0,0) circle (1.5pt);
\fill[color=black] (1,0) circle (1.5pt);
%\fill[color=black] (2,0) circle (1.5pt);

%% Basic backwards loop
\draw [-, thick] (-0.05,0.05) .. controls (-0.25,0.25) and (-0.4,0.1) .. (-0.4,0);
\draw [-, thick] (-0.05,-0.05) .. controls (-0.25,-0.25) and (-0.4,-0.1) .. (-0.4,0);
\begin{scope}[xshift=1cm]
\draw [-, thick] (0.05,0.05) .. controls (0.25,0.25) and (0.4,0.1) .. (0.4,0);
\draw [-, thick] (0.05,-0.05) .. controls (0.25,-0.25) and (0.4,-0.1) .. (0.4,0);
\end{scope}

\draw [->, thick] (0.1,0.05) .. controls (0.4,0.15) and (0.6,0.15) .. (0.9,0.05);
\draw [->, thick] (0.9,-0.05) .. controls (0.6,-0.15) and (0.4,-0.15) .. (0.1,-0.05);
\end{tikzpicture}
%Pic 3
\begin{tikzpicture}[scale=1.2]
\draw[color=black!20] (-0.8,-0.4) -- (-0.8,0.8) -- (2.2,0.8) -- (2.2,-0.4) -- cycle;
\node at (-0.5,0.5) {$\mathrm{(iii)}$};

\fill[color=black] (0,0) circle (1.5pt);
\fill[color=black] (1,0) circle (1.5pt);
\fill[color=black] (1.5,0) circle (1.5pt);

%% Basic backwards loop
\draw [-, thick] (-0.05,0.05) .. controls (-0.25,0.25) and (-0.4,0.1) .. (-0.4,0);
\draw [-, thick] (-0.05,-0.05) .. controls (-0.25,-0.25) and (-0.4,-0.1) .. (-0.4,0);
%%
%% Basic forwards loop
\begin{scope}[xshift=1.5cm]
\draw [-, thick] (0.05,0.05) .. controls (0.25,0.25) and (0.4,0.1) .. (0.4,0);
\draw [-, thick] (0.05,-0.05) .. controls (0.25,-0.25) and (0.4,-0.1) .. (0.4,0);
\end{scope}

\draw [->, thick] (0.1,0.05) .. controls (0.4,0.15) and (0.6,0.15) .. (0.9,0.05);
\draw [->, thick] (0.9,-0.05) .. controls (0.6,-0.15) and (0.4,-0.15) .. (0.1,-0.05);
\end{tikzpicture}
%Pic 4
\begin{tikzpicture}[scale=1.2]
\draw[color=black!20] (-0.6,-0.4) -- (-0.6,0.8) -- (2.5,0.8) -- (2.5,-0.4) -- cycle;
\node at (-0.3,0.5) {$\mathrm{(iv)}$};

\fill[color=black] (0,0) circle (1.5pt);
\fill[color=black] (1,0) circle (1.5pt);
\fill[color=black] (2,0) circle (1.5pt);

\begin{scope}[xshift=1cm] %upwards
\draw [-, thick] (0.05,0.05) .. controls (0.25,0.25) and (0.1,0.4) .. (0,0.4);
\draw [-, thick] (-0.05,0.05) .. controls (-0.25,0.25) and (-0.1,0.4) .. (0,0.4);
\end{scope}

\draw [<-, thick] (0.1,0.05) .. controls (0.4,0.15) and (0.6,0.15) .. (0.9,0.05);
\draw [<-, thick] (0.9,-0.05) .. controls (0.6,-0.15) and (0.4,-0.15) .. (0.1,-0.05);

\begin{scope}[xshift=1cm] %upwards
\draw [->, thick,dashed] (0.1,0.05) .. controls (0.4,0.15) and (0.6,0.15) .. (0.9,0.05);
\draw [->, thick] (0.9,-0.05) .. controls (0.6,-0.15) and (0.4,-0.15) .. (0.1,-0.05);
\end{scope}

\end{tikzpicture}

%%

%Pic 7
\begin{tikzpicture}[scale=1.2]
\draw[color=black!20] (-1.3,-1.2) -- (-1.3,0.3) -- (1.2,0.3) -- (1.2,-1.2) -- cycle;
\node at (-1,0) {$\mathrm{(v)}$};

\fill[color=black] (0,0) circle (1.5pt);
\fill[color=black] (-0.5,-0.87) circle (1.5pt);
\fill[color=black] (0.5,-0.87) circle (1.5pt);

\draw[->, thick] (-0.05,-0.087) -- (-0.45,-0.8); 
\draw[<-, thick] (0.05,-0.087) -- (0.45,-0.8); 
\draw[->, thick] (-0.4,-0.87) -- (0.4,-0.87); 
%Loops
\begin{scope}[xshift=0.5cm,yshift=-0.87cm]
\draw [-, thick] (0.05,0.05) .. controls (0.25,0.25) and (0.4,0.1) .. (0.4,0);
\draw [-, thick] (0.05,-0.05) .. controls (0.25,-0.25) and (0.4,-0.1) .. (0.4,0);
\end{scope}

\begin{scope}[xshift=-0.5cm,yshift=-0.87cm]
\draw [-, thick] (-0.05,0.05) .. controls (-0.25,0.25) and (-0.4,0.1) .. (-0.4,0);
\draw [-, thick] (-0.05,-0.05) .. controls (-0.25,-0.25) and (-0.4,-0.1) .. (-0.4,0);
\end{scope}

\end{tikzpicture}
%Pic 6
\begin{tikzpicture}[scale=1.2]
\draw[color=black!20] (-1.1,-1.2) -- (-1.1,0.3) -- (1.2,0.3) -- (1.2,-1.2) -- cycle;
\node at (-0.8,0) {$\mathrm{(vi)}$};
%\fill[color=black!20] (-1,-1.2) -- (-1,0.2) -- (1.2,0.2) -- (1.2,-1.2) -- cycle;

\fill[color=black] (0,0) circle (1.5pt);
\fill[color=black] (-0.5,-0.87) circle (1.5pt);
\fill[color=black] (0.5,-0.87) circle (1.5pt);

\draw[->, thick, dashed] (-0.05,-0.087) -- (-0.45,-0.8); 
\draw[<-, thick] (0.05,-0.087) -- (0.45,-0.8); 
%Loop
\begin{scope}[xshift=0.5cm,yshift=-0.87cm]
\draw [-, thick] (0.05,0.05) .. controls (0.25,0.25) and (0.4,0.1) .. (0.4,0);
\draw [-, thick] (0.05,-0.05) .. controls (0.25,-0.25) and (0.4,-0.1) .. (0.4,0);
\end{scope}

\begin{scope}[xshift=-0.5cm,yshift=-0.87cm]
\draw [->, thick] (0.1,0.05) .. controls (0.4,0.15) and (0.6,0.15) .. (0.9,0.05);
\draw [->, thick] (0.9,-0.05) .. controls (0.6,-0.15) and (0.4,-0.15) .. (0.1,-0.05);
\end{scope}
\end{tikzpicture}
%Pic 8
\begin{tikzpicture}[scale=1.2]
\draw[color=black!20] (-1.1,-1.2) -- (-1.1,0.3) -- (1,0.3) -- (1,-1.2) -- cycle;
\node at (-0.8,0) {$\mathrm{(vii)}$};
%\fill[color=black!20] (-1,-1.2) -- (-1,0.2) -- (1,0.2) -- (1,-1.2) -- cycle;

\fill[color=black] (0,0) circle (1.5pt);
\fill[color=black] (-0.5,-0.87) circle (1.5pt);
\fill[color=black] (0.5,-0.87) circle (1.5pt);

\begin{scope}[rotate=-60]
\draw [->, thick] (0.1,0.03) .. controls (0.4,0.13) and (0.6,0.13) .. (0.9,0.03);
\draw [->, thick] (0.9,-0.05) .. controls (0.6,-0.15) and (0.4,-0.15) .. (0.1,-0.05);
\end{scope}

\begin{scope}[rotate=-120]
\draw [->, thick] (0.1,0.03) .. controls (0.4,0.13) and (0.6,0.13) .. (0.9,0.03);
\draw [->, thick] (0.9,-0.05) .. controls (0.6,-0.15) and (0.4,-0.15) .. (0.1,-0.05);
\end{scope}

\begin{scope}[xshift=-0.5cm,yshift=-0.87cm]
\draw [->, thick,dashed] (0.1,0.03) .. controls (0.4,0.13) and (0.6,0.13) .. (0.9,0.03);
\draw [->, thick] (0.9,-0.05) .. controls (0.6,-0.15) and (0.4,-0.15) .. (0.1,-0.05);
\end{scope}
\end{tikzpicture}
\end{center}
\caption{\label{fig33} Examples of subpatterns which lead to $[2]$-indefiniteness in a $3$-pattern.}
\end{figure}

(iii) $\mathrm{det}^{[2]}\,M>0$ (resp., $\mathrm{det}^{[2]}\,M<0$) if and only if 
\begin{itemize}
\item $G_M$ contains none of the subgraphs identified in part (ii) above; and
\item $G_M$ contains a pair of positive (resp., negative) loops; or an odd (resp., even) triangle; or an even (resp., odd) subgraph of the form (3b).
\end{itemize}

\end{prop}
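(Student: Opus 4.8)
Everything hinges on Lemma~\ref{lem3verts}, which tells us that for a $3$-pattern $M$ the polynomial $\mathrm{det}^{[2]}\,M$ is $=0$, $>0$, $<0$, or $\gtrless 0$ according to whether it has no terms, only positive terms, only negative terms, or terms of both signs (the non-vertex monomial $x_{11}x_{22}x_{33}$ never spoils this, by Remark~\ref{remverts}). So the whole proposition reduces to reading off, from $G_M$, which monomials occur in $\mathrm{det}^{[2]}\,M$ and with what signs. My plan is to build this combinatorial dictionary once, then settle (i), the forward direction of (ii), and (iii) quickly, leaving the converse of (ii) as the one substantial step.

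\textbf{The dictionary.} Starting from the explicit expression for $\mathrm{det}^{[2]}\,M$ recorded in the proof of Lemma~\ref{lem3verts}, I would sort its fifteen monomials into four families indexed by the cycle structure of $G_M$ they encode, writing $\sigma_{ii}$ for the sign of the loop $\ell_i$: (L2) a pair of loops $\{\ell_i,\ell_j\}$, contributing two terms of signs $\sigma_{ii}$ and $\sigma_{jj}$; (L3) the triple of loops, contributing the single non-vertex term of sign $\sigma_{11}\sigma_{22}\sigma_{33}$; (LC) a loop $\ell_i$ together with an incident $2$-cycle $C$, contributing one term of sign $\mathrm{par}(\ell_i\cup C)$; and (T) a triangle $T$, contributing one term whose sign enters only through the $-J_3$ part of $q_3=J_1J_2-J_3$ (Proposition~\ref{J2formula}) and equals $\mathrm{par}(T)$. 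These signs follow from the hooping/minor-sum correspondence of Section~\ref{secsignpat}; since the fifteen monomials are pairwise distinct, substituting signed variables produces no cancellation, so the \emph{presence} of each term depends only on $G_{\overline M}$ while its \emph{sign} is as listed. The one place demanding care is the triangle's sign, which must be tracked through $-J_3$: the proof yields that an even triangle contributes a positive term and an odd triangle a negative one, and this is exactly the correspondence one should check against the parities named in (iii).

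\textbf{Parts (i), (ii)-forward, (iii).} Part (i) is then immediate: $\mathrm{det}^{[2]}\,M\equiv 0$ iff all of L2, LC, T are empty (note L3 forces L2), i.e.\ iff $G_{\overline M}$ contains no (3a) [pair of loops], (3b) [loop incident to a $2$-cycle], or (3c) [triangle]. For the forward direction of (ii) I would exhibit, in each configuration, two terms of opposite sign: (S1) loop terms with $\sigma_{ii}\neq\sigma_{jj}$; (S2) an even $2$-cycle, necessarily incident to some loop $\ell_i$ of the pair (a $2$-cycle cannot avoid two of three vertices), whose LC term has sign $\mathrm{par}(\ell_i\cup C)=-\sigma_{ii}$, opposite to the loop term of sign $\sigma_{ii}$; (S3) two LC terms of opposite sign; (S4) loop terms opposite to the triangle term; (S5) an LC term opposite to the triangle term; (S6) two triangle terms of opposite parity. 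Part (iii) follows from Lemma~\ref{lem3verts}: when no $S_i$ occurs and $\mathrm{det}^{[2]}\,M\neq 0$, its sign is the common sign of all its terms, hence $\mathrm{det}^{[2]}\,M>0$ iff some positive term is present; by the dictionary the positive-generating structures are a pair of positive loops, a positive (i.e.\ even) subgraph of type (3b), and a positive triangle — and the $<0$ case follows at once by applying Proposition~\ref{propgen}(1) to $-M$, which for $n=3$ flips $\mathrm{det}^{[2]}$ and interchanges the two lists of structures.

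\textbf{The main obstacle: the converse of (ii).} The real work is proving that if $G_M$ contains none of (S1)--(S6) then $\mathrm{det}^{[2]}\,M$ is not sign-indefinite; I would argue the contrapositive by a case split on the number of loops. If two or three loops are present, absence of (S1) makes them all of one sign $s$, so the L2 and L3 terms all have sign $s$; absence of (S2) then forbids any even $2$-cycle, forcing every incident (hence every) LC term to have sign $s$; and absence of (S4) forbids a triangle of the parity opposite to that of the loops, so every triangle term also has sign $s$ — all terms agree. If exactly one loop is present there are no L2/L3 terms; absence of (S3) gives all loop-incident $2$-cycles a common parity and hence the LC terms a common sign $u$, absence of (S5) forces each triangle to share $u$ whenever an incident $2$-cycle exists, and absence of (S6) handles the remaining triangles — again all terms agree. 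If no loop is present, only triangle terms survive and absence of (S6) makes them share a parity. In every case the terms are of a single sign, so $\mathrm{det}^{[2]}\,M$ is not mixed. The content of this step is precisely the verification that (S1)--(S6) are a \emph{complete} list of the obstructions to sign-consistency; what keeps the analysis finite and closed is the rigidity of the three-vertex graph (e.g.\ that a $2$-cycle is always incident to a prescribed pair of loops, and that there are only two possible triangles).
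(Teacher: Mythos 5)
Your proof is correct and, at bottom, rests on the same two pillars as the paper's: Lemma~\ref{lem3verts} (for a $3$-pattern, sign-indefiniteness is equivalent to having mixed terms) and the correspondence between the monomials of $\mathrm{det}^{[2]}\,M$ and cycle structures in $G_M$. Where you differ is in organization and in how much you actually write down. The paper disposes of (ii) with ``this can be confirmed by direct computation'' and relegates the structural explanation to Remark~\ref{min3mixed}, where $q_3$ is split as a triangle part $J_3'$ minus a part $J_{12}$ coming from (3a), (3b) and (3f), and indefiniteness is classified by which of the two parts has mixed terms or by their relative signs. You instead build an explicit four-family sign dictionary (L2, L3, LC, T) for the fifteen monomials and prove the converse of (ii) by a case split on the number of loops. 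The two decompositions are doing the same bookkeeping, but yours supplies the verification the paper omits, and your observation that the fifteen monomials are pairwise distinct (so no cancellation occurs under re-signing) is exactly the point that makes the dictionary legitimate. Your proof of (i) via ``no L2, LC or T terms'' is cleaner than the paper's detour through (3d), (3e) and inheritance, and your use of Proposition~\ref{propgen}(1) to get the negative case of (iii) from the positive case is a nice economy the paper does not make explicit.

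One point you should not leave hanging: you correctly derive that an \emph{even} triangle contributes a positive term to $q_3=J_1J_2-J_3$ (an odd triangle gives a positive term in $J_3$, hence a negative term in $q_3$), but then merely remark that this ``should be checked against the parities named in (iii)'' and later write ``positive triangle'' without committing. Commit. The statement of (iii) as printed has the triangle parities reversed; your derivation is the correct one. This is corroborated by the paper itself: in the first example of Section~3 the unique triangle has edge signs $(-,+,+)$, hence is even, and the asserted conclusion is $\mathrm{det}^{[2]}\,M>0$; and a direct check on the all-positive $3$-cycle (eigenvalues the cube roots of unity) gives $\mathrm{det}^{[2]}\,M=-1<0$ for an odd triangle. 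The source of the slip is visible in the paper's own proof of (ii), which quotes the formula as $q_3=J_3-J_1J_2$ rather than $q_3=J_1J_2-J_3$ from Proposition~\ref{J2formula}; that sign is harmless for indefiniteness questions but not for (iii). So state plainly that you are proving (iii) with ``even (resp., odd) triangle'', and note that the loop and (3b) clauses of (iii), which are consistent with $q_3=J_1J_2-J_3$, confirm this is the intended reading.
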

\begin{proof}
We may assume, w.l.o.g., that $G_M$ is weakly reversible (see Remark~\ref{remWR}). 

(i) Observe that for a weakly reversible digraph $G$ on $3$ vertices, the absence in $G$ of a subgraph isomorphic to (3a), (3b) or (3c) is equivalent to $G$ being isomorphic to a subgraph of (3d) or (3e). By easy application of the observations in Section~\ref{secsignpat}, the patterns corresponding to (3a), (3b) and (3c) are not $[2]$-zero, while those corresponding to (3d) and (3e) are $[2]$-zero. The result follows by inheritance (Lemma~\ref{leminherit}(iii)). 

(ii) This can be confirmed by direct computation. The formula $q_3 = J_3-J_1J_2$ (Proposition~\ref{J2formula}) allows us to understand why the unique minimal $[2]$-indefinite $3$-patterns are those with signed digraphs (S1)--(S6) above, as detailed in Remark~\ref{min3mixed} below.

(iii) A $3$-pattern containing none of the subgraphs identified in (ii) cannot be $[2]$-indefinite. If it contains (3a), (3b) or (3c) then, by part (i), it is not $[2]$-zero, and so must be $[2]$-positive or $[2]$-negative, by Lemma~\ref{lem3verts}. Also by Lemma~\ref{lem3verts} such a pattern is $[2]$-positive (resp., $[2]$-negative) if $\mathrm{det}^{[2]}\,M$ contains a positive (resp., negative) term. We quickly arrive at the conclusion. 
\end{proof}

\begin{remark}
\label{min3mixed}For a $3$-pattern $M$ terms in $J_3$ arise from the hoopings (3c), (3e) and (3f). On the other hand terms in $J_1J_2$, arise from (3a), (3b), (3e) and (3f). A monomial associated with (3e) occurs with the same coefficient in $J_3$ and $J_1J_2$, and so does not figure in $q_3$. A monomial associated with (3f) occurs with the same sign in $J_3$ and $J_1J_2$, but with greater magnitude in $J_1J_2$. Define $J_3'' \subseteq J_3$ to consist of terms of $J_3$ corresponding to (3e) and (3f). Define $J_3':=J_3-J_3''$ and $J_{12}:= J_1J_2-J_3''$, so that $q_3 = J_3'-J_{12}$. By the remarks above, $J_3'$ is associated only with triangles, while $J_{12}$ is associated with digraphs (3a), (3b) and (3f). By Lemma~\ref{lem3verts}, a necessary and sufficient condition for $q_3$ to be sign-indefinite is for one of the following to hold:
\begin{enumerate}
\item $J_{12}$ has mixed terms. To find minimal $3$-patterns with this property it suffices to examine triangle-free, weakly reversible digraphs which are not subgraphs of (3d) or (3e). We are left with (S1) -- (S3) above.
\item $J_3'$ has mixed terms. In other words, $G_M$ must include a pair of triangles of opposite parity. The minimal $3$-pattern with this property is (S6) above.
\item Neither $J_3'$ nor $J_{12}$ have mixed terms, but both are nonzero and have the same sign. Since $J_3'$ is nonzero, $G_{\overline{M}}$ must include a triangle. Since $J_{12}$ is nonzero, $G_{\overline{M}}$ must include (3a) or (3b). The minimal $3$-patterns satisfying these criteria, and not already covered above, are (S4) and (S5) above.
\end{enumerate}
\end{remark}

\subsection{$4$-patterns}

We saw that a $3$-pattern must be $[2]$-definite, $[2]$-indefinite or $[2]$-zero. This does not hold in higher dimensions. Consider, for example, any $4$-pattern where $J_1=0$ but $J_3 \neq 0$. In this case, according to Proposition~\ref{J2formula}, $q_4=-J_3^2$, and so $q_4 \leq 0$ does not necessarily imply $q_4<0$ or $q_4=0$. A minimal example is the following.
\begin{example}
\label{exnz}
Consider the $4$-pattern $M$ shown below alongside $G_M$. 
${}$
%CG`o

\begin{center}
\begin{tikzpicture}[scale=1.3]
%\fill[color=black!20] (-1,-1.2) -- (-1,0.2) -- (1.2,0.2) -- (1.2,-1.2) -- cycle;
\fill[color=black] (0,0) circle (1.5pt);
\fill[color=black] (0.87,-0.5) circle (1.5pt);
\fill[color=black] (0.87,0.5) circle (1.5pt);
\fill[color=black] (1.74,0) circle (1.5pt);

\draw[<-, thick] (0.087,0.05) -- (0.8,0.45); 
\draw[->, thick] (0.087,-0.05) -- (0.8,-0.45); 
\draw[->, thick] (0.87,-0.4) -- (0.87,0.4); 
\draw[<-, thick] (1.68,0.05) -- (0.94,0.45); 
\draw[->, thick, dashed] (1.68,-0.05) -- (0.94,-0.45); 

\begin{scope}[xshift=-3cm]
\node at (0,0) {$M = \left(\begin{array}{cccc}0&x_1&0&0\\0&0&x_2&0\\x_3&0&0&x_4\\0&-x_5&0&0\end{array}\right)$};
\end{scope}
\end{tikzpicture}
\end{center}
In this case $\mathrm{det}^{[2]}\,M = -x_2^2(x_4x_5-x_1x_3)^2 \leq 0$, but $\mathrm{det}^{[2]}\,M \not < 0$ and $\mathrm{det}^{[2]}\,M \neq 0$. 
\end{example}

Even if $\mathrm{det}^{[2]}\,M$ is sign-definite, in dimension $4$ and higher it may have mixed terms. An example is the following slight variant on the previous example.

\begin{example}
\label{exmixed}
Consider the $4$-pattern $M$ shown below alongside $G_M$. 

\begin{center}
\begin{tikzpicture}[scale=1.3]
%\fill[color=black!20] (-1,-1.2) -- (-1,0.2) -- (1.2,0.2) -- (1.2,-1.2) -- cycle;
\fill[color=black] (0,0) circle (1.5pt);
\fill[color=black] (0.87,-0.5) circle (1.5pt);
\fill[color=black] (0.87,0.5) circle (1.5pt);
\fill[color=black] (1.74,0) circle (1.5pt);

\draw[<-, thick] (0.087,0.05) -- (0.8,0.45); 
\draw[->, thick] (0.087,-0.05) -- (0.8,-0.45); 
\draw[->, thick] (0.87,-0.4) -- (0.87,0.4); 
\draw[<-, thick] (1.68,0.05) -- (0.94,0.45); 
\draw[->, thick, dashed] (1.68,-0.05) -- (0.94,-0.45); 

\begin{scope}[xshift=-3cm]
\node at (0,0) {$M = \left(\begin{array}{cccc}0&x_1&0&0\\0&0&x_2&0\\x_3&0&0&x_4\\0&-x_5&0&x_6\end{array}\right)$};
\end{scope}
%Loop
\begin{scope}[xshift=1.74cm]
\draw [-, thick] (0.05,0.05) .. controls (0.25,0.25) and (0.4,0.1) .. (0.4,0);
\draw [-, thick] (0.05,-0.05) .. controls (0.25,-0.25) and (0.4,-0.1) .. (0.4,0);
\end{scope}
\end{tikzpicture}
\end{center}
In this case $\mathrm{det}^{[2]}\,M = -x_2(x_2(x_4x_5-x_1x_3)^2+x_1x_3x_6^3)<0$ although $\mathrm{det}^{[2]}\,M$ has mixed terms. 
\end{example}

A complete analysis of $4$-patterns in the spirit of Proposition~\ref{prop33} is possible. We can find all minimal $[2]$-nonzero $4$-patterns and minimal $[2]$-indefinite $4$-patterns. Our goal here is to illustrate uses of Proposition~\ref{J2formula}, rather than study sign patterns {\em per se}, and so, instead, we use Proposition~\ref{J2formula} in an inductive way to find a combinatorial sufficient condition for a $4$-pattern to be $[2]$-nonnegative. 

\begin{prop}
\label{prop44}
Let $M$ be a $4$-pattern. Suppose that $G_M$ has the following property:
\begin{quote}
(P)\hspace{0.5cm} No triangles, loops and $2$-cycles are odd, $4$-cycles are even. 
\end{quote}
Then $\mathrm{det^{[2]}}\,M \geq 0$.
\end{prop}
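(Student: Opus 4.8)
The plan is to begin from Proposition~\ref{J2formula}, which gives
\[
\mathrm{det}^{[2]}M = q_4 = J_1J_2J_3 - J_3^2 - J_1^2J_4,
\]
and to exploit the ``inductive'' factorisation
\[
q_4 = J_3\left(J_1J_2 - J_3\right) - J_1^2J_4 = J_3\,q_3 - J_1^2J_4,\qquad q_3:=J_1J_2-J_3,
\]
in which the degree-$3$ formula reappears. The first step is to read off signs from the hooping/minor-sum dictionary of Section~\ref{secsignpat}, using that a hooping with edge set $E$ and permutation $\sigma$ contributes to $J_i$ with sign $\mathrm{par}(E)\,\mathrm{sgn}(\sigma)(-1)^i$. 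Under (P) every loop and every $2$-cycle is odd and there are no triangles, and a direct parity count then shows that every monomial of $J_1$, $J_2$ and $J_3$ is nonnegative on $\mathbb{R}^k_{\gg 0}$, so $J_1,J_2,J_3\ge0$. The term $J_4$ is split by the cycle type of its $2$-factors: single $4$-cycles are even and yield nonpositive monomials, whereas the disconnected $2$-factors (types $2{+}2$, $2{+}1{+}1$ and $1{+}1{+}1{+}1$) yield nonnegative monomials. Writing $J_4 = J_4^{\bullet}+J_4^{\circ}$ for these two groups, we have $J_4^{\bullet}\ge0$ and $J_4^{\circ}\le0$.

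The second step is to show $q_3\ge0$ directly. Expanding $J_1J_2$ as a sum over pairs (loop)$\times$(hooping on two vertices) and subtracting $J_3$, the $(2\text{-cycle}+\text{disjoint loop})$ contributions cancel identically, while the surviving monomials -- three disjoint loops (appearing with coefficient $2$ after cancellation), a loop sharing its vertex with a $2$-cycle, and $\ell_v^{\,2}\ell_w$ arising from a loop times a loop-pair -- all have positive coefficients under (P). Hence $q_3$ has only nonnegative coefficients and $q_3\ge0$, whence $J_3\,q_3\ge0$.

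Assembling these facts,
\[
q_4 = \left(J_3\,q_3 + J_1^2\,|J_4^{\circ}|\right) - J_1^2\,J_4^{\bullet},
\]
where the bracketed expression is nonnegative and $J_1^2J_4^{\bullet}\ge0$; so everything reduces to the pointwise inequality $J_3\,q_3 + J_1^2|J_4^{\circ}|\ge J_1^2J_4^{\bullet}$ on $\mathbb{R}^k_{\gg 0}$. This is the crux and the main obstacle. It cannot be settled by comparing coefficients: Example~\ref{ex4b} satisfies (P) yet its $\mathrm{det}^{[2]}M$ has genuinely negative terms, so $J_3q_3 - J_1^2J_4^{\bullet}$ already carries negative coefficients and positivity is a statement about values, not coefficients. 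I would therefore attack it as a genuine positivity statement, by a monomial-by-monomial AM--GM (transportation) argument that dominates each monomial of $J_1^2J_4^{\bullet}$ by a weighted geometric mean of monomials drawn from $J_3q_3$ (and, where a $4$-cycle is present, from $J_1^2|J_4^{\circ}|$), checking that no source monomial is overspent. Should a clean combinatorial matching prove elusive, the fallback -- entirely in the spirit of Section~\ref{secpsatz} -- is to exhibit a uniform Positivstellensatz certificate for $q_4$. A third, purely finitary option is available because (P) is inherited by sub-patterns (deleting edges destroys cycles but preserves parities) while $[2]$-nonnegativity is inherited \emph{downward} (the contrapositive of Lemma~\ref{leminherit}(ii)): it then suffices to certify the claim on the finitely many maximal $4$-patterns satisfying (P). In every route the difficulty is concentrated in this single pointwise inequality, the sign bookkeeping of the first two steps being routine.
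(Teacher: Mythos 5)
Your reduction is sound as far as it goes: the identity $q_4 = J_3(J_1J_2-J_3)-J_1^2J_4$, the hooping-based sign bookkeeping giving $J_1,J_2,J_3\geq 0$ and $J_1J_2-J_3\geq 0$ coefficient-by-coefficient under (P), and the splitting of $J_4$ into a nonnegative disconnected part and a nonpositive $4$-cycle part are all correct, and they correspond exactly to facts the paper also uses (its proof invokes $0\leq J_3\leq J_1J_2$). But your argument stops at the decisive step. The inequality you isolate as ``the crux'' --- that $J_3(J_1J_2-J_3)+J_1^2|J_4^{\circ}|$ dominates $J_1^2J_4^{\bullet}$ pointwise on the positive orthant --- is just $q_4\geq 0$ with the terms regrouped, i.e.\ a restatement of the proposition, and none of the three strategies you offer for it is actually carried out: no AM--GM matching is exhibited, no Positivstellensatz certificate is produced, and the finitary reduction to maximal patterns (logically valid via Lemma~\ref{leminherit}) still leaves finitely many genuine polynomial-nonnegativity problems unverified. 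As written this is a correct reformulation plus a research plan, not a proof.

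For comparison, the paper closes precisely this gap with a telescoping construction: $G_M$ is built from the arcless digraph by adding first the loops, then pairs of disjoint $2$-cycles, then the remaining $2$-cycles, then all remaining arcs, and the increment of $\mathrm{det}^{[2]}$ at each stage is computed from Proposition~\ref{J2formula} as the expression (\ref{eq4induct}), namely $J^i_{1,\alpha}J^i_{1,\beta}(C_\alpha-C_\beta)^2+(J^i_{1,\beta}C_\alpha+J^i_{1,\alpha}C_\beta)(J^i_1J^i_2-J^i_3)+(C_\alpha J^i_{1,\alpha}+C_\beta J^i_{1,\beta})J^i_3+(J^i_1)^2R$, every summand of which is nonnegative under (P). The perfect square $J^i_{1,\alpha}J^i_{1,\beta}(C_\alpha-C_\beta)^2$ is the telling feature: it absorbs a negative cross-term that a naive monomial-by-monomial domination cannot handle without grouping, which is exactly where your proposed ``clean combinatorial matching'' would run into trouble. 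If you want to complete your route, the most direct fix is to adopt this incremental decomposition, or equivalently to exhibit a one-shot certificate for $q_4$ as a sum of squares times polynomials with nonnegative coefficients in the cycle variables.
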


\begin{proof}
$G:=G_M$ can be constructed from the arc-less digraph on $4$ vertices $G_0$ as follows:
\begin{enumerate}
\item[(i)] We first add to $G_0$ all the loops of $G$ to get $G_1$;
\item[(ii)] If there exists a pair of disjoint $2$-cycles in $G$ we choose one such pair and add this to $G_1$ to obtain $G_2$; we continue until there are no more pairs of disjoint $2$-cycles to add. In this way we obtain a signed digraph $G_k$. 
\item[(iii)] If there exist any remaining $2$-cycles in $G$, not already in $G_k$, we add these one at a time, obtaining, eventually, $G_n$ which has all the $2$-cycles of $G$.
\item[(iv)] We add in any remaining edges to get $G_{n+1}=G$. 
\end{enumerate}
We thus have a sequence of signed digraphs:
\[
G_0 \leq G_1 \leq \cdots \leq G_k \leq \cdots \leq G_n \leq G_{n+1}=G\,,
\]
and a corresponding sequence of sign patterns
\[
M^0 \leq M^1 \leq \cdots \leq M^k \leq \cdots \leq M^n \leq M^{n+1}=M\,.
\]
Clearly each of $G_0, \ldots, G_{n+1}$ has property $P$ each being a subgraph of $G$. 

Trivially, $\mathrm{det}^{[2]}\,M^0 = 0$, and $\mathrm{det}^{[2]}\,M^1 \geq 0$ as $\mathrm{det}^{[2]}\,M^1$ is just the product of sums of pairs of diagonal entries of $M$ which are all positive or zero (as loops of $G_M$ are odd). We show that $\mathrm{det}^{[2]}$ is an increasing function on $(M^i)$, in the sense that $\mathrm{det}^{[2]}\,M^{i} - \mathrm{det}^{[2]}\,M^{i-1} \geq 0$ for $i=1, \ldots, n+1$. Consequently $\mathrm{det}^{[2]} M\geq 0$. 

Consider a step of the form (ii), namely add a pair of disjoint $2$-cycles to $M^{i}$ on (disjoint) vertex pairs $\alpha$ and $\beta$ to get $M^{i+1}$. Then, letting $J^i_k$ refer to $k$th minor-sum of $M^i$: 
\begin{enumerate}
\item[(a)] $J^{i+1}_1 = J^i_1 = J^i_{1, \alpha}+J^i_{1, \beta}$, where $J^i_{1, \alpha}$ and $J^i_{1, \beta}$ are the subpolynomials of $J^i_1$ associated with loops on $\alpha$ and $\beta$ respectively;
\item[(b)] $J^{i+1}_2= J^i_2+C_\alpha+C_\beta$, where $C_\alpha$ and $C_\beta$ are terms corresponding, respectively, to the $2$-cycles on $\alpha$ and $\beta$;
\item[(c)] $J^{i+1}_3= J^i_3+J^i_{1, \beta}C_\alpha+J^i_{1, \alpha}C_\beta$, as no triangles were created; and 
\item[(d)] $J^{i+1}_4= J^i_4+C_\alpha C_\beta-R$. Here $R$ is either zero in the case that no new $4$-cycles were created, or a nonzero polynomial corresponding to new $4$-cycles. 
\end{enumerate}
A quick calculation using Proposition~\ref{J2formula} and (a)--(d) above gives, for $\mathrm{det}^{[2]}M^{i+1} - \mathrm{det}^{[2]}M^{i}$,
\begin{equation}
\label{eq4induct}
J^i_{1, \alpha}J^i_{1, \beta}(C_\alpha-C_\beta)^2 + (J^i_{1, \beta}C_\alpha+J^i_{1, \alpha}C_\beta)(J^i_1J^i_2-J^i_3) + (C_\alpha J^i_{1, \alpha}+C_\beta J^i_{1, \beta})J^i_3 +(J^i_1)^2R.
\end{equation}
Assumption $P$ implies that $J^i_{1, \alpha}$ and $J^i_{1, \beta}$ are nonnegative (as loops are odd); $C_\alpha$ and $C_\beta$ are positive (as the added $2$-cycles are odd); and $R$ is either zero in the case that no new $4$-cycles were created, or positive in the case that some new, even, $4$-cycles were created. Moreover, the fact that loops and $2$-cycles in $G_i$ are odd and there are no triangles in $G_i$ implies that $0 \leq J^i_3 \leq J^i_1J^i_2$. Thus, clearly (\ref{eq4induct}) is nonnegative. 

A step of the form (iii), where a single odd $2$-cycle is added on, say, vertices $\alpha$, while there is no $2$-cycle on $\beta$, corresponds to setting $C_\beta=0$ in (\ref{eq4induct}), while a step of the form (iv), where additional edges are added without creating any $2$-cycles corresponds to setting $C_\alpha = C_\beta = 0$ in (\ref{eq4induct}). In every case, $\mathrm{det}^{[2]}\,M^{i+1}-\mathrm{det}^{[2]}\,M^{i}\geq 0$. This completes the proof that $\mathrm{det}^{[2]}$ is increasing on the sequence $(M^i)$ and thus that $\mathrm{det}^{[2]}\,M \geq 0$.
\end{proof}

\begin{remark}
From the proof of Proposition~\ref{prop44}, $\mathrm{det}^{[2]}\,M > 0$ if $\mathrm{det}^{[2]}\,M^i>0$ for any $i = 1, \ldots, n$ (e.g., if $G_M$ has three or more loops). The condition in Proposition~\ref{prop44} that $G_M$ includes no triangles is necessary for the conclusion: by Proposition~\ref{propgen1}(ii) a $4$-pattern with a triangle cannot be $[2]$-nonnegative.
\end{remark}

\subsection{$5$-patterns}

The following three lemmas are corollaries of Proposition~\ref{J2formula} and illustrate how the proposition leads to conditions for sign-definiteness or semidefiniteness of a $5$-pattern. We stress that these are merely {\em examples}, and an exhaustive analysis of the case $n=5$ is likely to reveal more such general conditions. 

\begin{lemma}
\label{lem5pat1}
Consider a $5$-pattern $M$ such that $J_4>0$ and either (i) $J_1>0$, $J_5>0$ and $J_2J_3<0$, or (ii) $J_1<0$, $J_5<0$ and $J_2J_3>0$. Then $\mathrm{det}^{[2]}\,M<0$.
\end{lemma}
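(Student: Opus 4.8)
The plan is to work directly from the closed form for $q_5$ recorded after Proposition~\ref{J2formula}, namely
$$q_5 = -J_5^2 + 2J_1J_4J_5+J_2J_3J_5-J_1J_2^2J_5 - J_1^2J_4^2-J_3^2J_4 + J_1J_2J_3J_4,$$
and to rewrite it in a form where its sign is transparent under the stated hypotheses. Viewing $q_5$ as a quadratic in $J_5$ with leading coefficient $-1$, I would complete the square in $J_5$ and then factor what remains. A short computation yields the identity
$$q_5 = -(J_5 - J_1J_4)^2 + (J_1J_2 - J_3)(J_3J_4 - J_2J_5),$$
which one verifies by expanding both products on the right and matching the seven terms against $q_5$. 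This is the key algebraic step, and essentially the only calculation in the argument.

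With this identity in hand, case (i) reduces to a sign count. Here $J_1,J_4,J_5>0$ and $J_2J_3<0$, so $J_2$ and $J_3$ are nonzero of opposite sign. I would argue that the first factor $J_1J_2-J_3$ has the same strict sign as $J_2$: if $J_2>0$ then $J_3<0$, so $J_1J_2>0$ and $-J_3>0$; if $J_2<0$ the reverse inequalities hold. Likewise, using $J_4,J_5>0$, the second factor $J_3J_4-J_2J_5$ has the same strict sign as $J_3$. Hence their product has sign $\mathrm{sgn}(J_2)\,\mathrm{sgn}(J_3)=\mathrm{sgn}(J_2J_3)<0$ and is strictly negative. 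Since $-(J_5-J_1J_4)^2\leq 0$, we conclude $\mathrm{det}^{[2]}\,M = q_5<0$.

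Case (ii) I would not treat directly but deduce from case (i) using the symmetry of Proposition~\ref{propgen}(1). Since $n=5\equiv 1\pmod 4$, that result gives $\mathrm{det}^{[2]}\,(-M)=\mathrm{det}^{[2]}\,M$, while $M\mapsto -M$ sends $J_i\mapsto -J_i$ for odd $i$ and fixes $J_i$ for even $i$. Thus the hypotheses of (ii), namely $J_1<0$, $J_5<0$, $J_4>0$, $J_2J_3>0$, become $-J_1>0$, $-J_5>0$, $J_4>0$ and $(J_2)(-J_3)<0$ for $-M$, which are exactly the hypotheses of (i) applied to $-M$. Applying case (i) to $-M$ gives $\mathrm{det}^{[2]}\,(-M)<0$, hence $\mathrm{det}^{[2]}\,M<0$.

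The only place requiring care is the completion-of-the-square identity: the non-square remainder must factor exactly into the two displayed binomials, and I would confirm this by expansion rather than by guessing the grouping. Everything after that is bookkeeping of signs, and the reduction of (ii) to (i) is immediate from the parity result, so I expect no genuine obstacle beyond pinning down that single factorisation correctly.
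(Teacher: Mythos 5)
Your proof is correct and follows essentially the same route as the paper: both hinge on completing the square in $J_5$ to write $q_5 = -(J_5-J_1J_4)^2 + J_2J_3J_5 - J_1J_2^2J_5 - J_3^2J_4 + J_1J_2J_3J_4$, after which the sign is read off. Your two refinements — factoring the remainder as $(J_1J_2-J_3)(J_3J_4-J_2J_5)$ (which is a correct identity) and deducing case (ii) from case (i) via the $M\mapsto -M$ symmetry of Proposition~\ref{propgen}(1) — are both valid, but the paper instead observes directly that each of the four leftover terms is nonpositive (and some strictly negative) under either hypothesis.
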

\begin{proof}
The formula in Proposition~\ref{J2formula} in the case $n=5$ can be written
\[
q_5 = -(J_5 - J_1J_4)^2 +J_2J_3J_5-J_1J_2^2J_5 -J_3^2J_4 + J_1J_2J_3J_4 
\]
from which the result follows by observation. (Indeed, a number of other conditions based on this rewriting of $q_5$ lead to $\mathrm{det}^{[2]}\,M<0$.)
\end{proof}

\begin{lemma}
\label{lem5pat2}
Consider a $5$-pattern $M$ such that $J_1J_2J_3>0$, $J_4>0$ and $J_1(J_5-J_2J_3)>0$. Then $\mathrm{det}^{[2]}\,M\leq 0$.
\end{lemma}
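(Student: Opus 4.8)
The plan is to begin from the explicit expression for $q_5$ recorded with Proposition~\ref{J2formula}, namely $q_5 = -J_5^2 + 2J_1J_4J_5 + J_2J_3J_5 - J_1J_2^2J_5 - J_1^2J_4^2 - J_3^2J_4 + J_1J_2J_3J_4$, and to reduce $-q_5$ to a binary quadratic form. Extracting the perfect square $(J_5-J_1J_4)^2$ leaves
\[
-q_5 = (J_5-J_1J_4)^2 + (J_1J_2-J_3)(J_2J_5-J_3J_4).
\]
This is the rewriting already used for Lemma~\ref{lem5pat1}. The key observation is that the second factor satisfies $J_2J_5-J_3J_4 = J_4(J_1J_2-J_3) + J_2(J_5-J_1J_4)$, so on setting $s:=J_5-J_1J_4$ and $\phi:=J_1J_2-J_3$ one obtains the clean identity
\[
-q_5 = s^2 + J_2\,s\phi + J_4\,\phi^2.
\]
Thus $-q_5$ is a binary quadratic form in $(s,\phi)$ whose discriminant is $J_2^2-4J_4$, and the whole lemma becomes the assertion that this form is nonnegative on the feasible region.

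The argument then splits on the sign of the discriminant. When $4J_4\ge J_2^2$ the form is positive semidefinite, and completing the square again gives $-q_5 = (s+\tfrac12 J_2\phi)^2 + (J_4-\tfrac14 J_2^2)\phi^2\ge 0$ using only $J_4>0$; here the other two hypotheses are not needed. When $J_2^2>4J_4$ the form is indefinite, and the hypotheses $J_1J_2J_3>0$ and $J_1(J_5-J_2J_3)>0$ must be invoked to confine $(s,\phi)$ to the cone on which the form stays nonnegative. Intuitively, reading $-q_5$ as an upward-opening parabola in $J_5$, the hypothesis $J_1(J_5-J_2J_3)>0$ forces $J_5$ to lie beyond $J_2J_3$ in the direction of $\mathrm{sgn}(J_1)$, and the content to verify is that this places $J_5$ outside the interval between the two real roots of $-q_5$. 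To make this rigorous I would produce an explicit Positivstellensatz certificate in the sense of Section~\ref{secpsatz}: a positive multiplier $p$ (a natural first guess being $p=J_1^2$) together with an identity expressing $p\cdot(-q_5)$ as a sum of squares plus nonnegative multiples of the constraint polynomials $J_4$, $J_1J_2J_3$ and $J_1(J_5-J_2J_3)$. As the paper notes, such a certificate is found numerically by the semidefinite-programming method and then rationalised to integer coefficients, after which the lemma follows by a direct expansion verifying the polynomial identity.

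The hard part is precisely this indefinite regime. Since the feasible region straddles the locus $J_2^2=4J_4$ where the discriminant changes sign, no single term-by-term sign assignment can succeed throughout, and the certificate must genuinely couple the square part of the form with positive multiples of $J_1J_2J_3$ and $J_1(J_5-J_2J_3)$; both constraints are essential, since dropping either one already permits $-q_5<0$. A further complication is that the bound is tight — $-q_5$ vanishes at interior points of the feasible set, for example at $s=\phi=0$ with $J_4>J_2^2$, which is exactly why the conclusion is only $\mathrm{det}^{[2]}\,M\le 0$ and not strict — so every square in the certificate must vanish at these contact points. Consequently the obstacle is not the reduction above but choosing the correct multiplier (and its degree) and rationalising the numerical sum-of-squares output into a verifiable identity.
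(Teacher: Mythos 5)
Your reduction is correct as far as it goes: the identity $-q_5 = s^2 + J_2\,s\phi + J_4\,\phi^2$ with $s=J_5-J_1J_4$, $\phi=J_1J_2-J_3$ checks out, and the case $4J_4\geq J_2^2$ is indeed immediate. But the proof has a genuine gap exactly where you locate it: in the indefinite regime $J_2^2>4J_4$ you give no argument at all, only a description of the kind of certificate you would search for. Since you yourself identify this as the hard part, what you have written is a plan, not a proof. Worse, the framing of that remaining step is doubtful. The hypotheses $J_1J_2J_3>0$ and $J_1(J_5-J_2J_3)>0$ do not translate into constraints on the pair $(s,\phi)$: they involve the $J_i$ individually and do not factor through $s$, $\phi$ and the coefficients $J_2,J_4$ of your quadratic form, so ``confining $(s,\phi)$ to the cone where the form is nonnegative'' is not obviously a well-posed reformulation of the lemma. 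Your candidate multiplier $p=J_1^2$ is also only a guess; nothing in the proposal establishes that a certificate of that shape exists.

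The paper resolves precisely this difficulty by a different change of variables, chosen so that each new variable is positive \emph{by hypothesis}: with $J_*=J_5-J_2J_3$ it sets $\alpha=J_1J_2J_3$, $\beta=J_1J_*$, $\gamma=J_1^2J_4$, $\delta=J_3^2$, obtains $J_1^2J_3^2\,q_5 = P_1(\alpha,\beta,\gamma,\delta)$ for an explicit quartic $P_1$, and then exhibits the explicit identity
\begin{equation*}
-4(\alpha+\beta+\gamma+\delta)P_1 = (4\alpha\gamma+4\alpha\beta+4\beta\gamma+3\gamma\delta+3\beta^2)(\alpha-\delta)^2 + 4(\alpha\delta+\beta\delta)(\alpha+\beta-\gamma)^2 + \gamma\delta(3\alpha+2\beta-2\gamma-\delta)^2+(2\alpha^2+\beta\delta+\alpha\beta-2\gamma\delta)^2,
\end{equation*}
which proves $P_1\leq 0$ on the positive orthant and hence $q_5\leq 0$. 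That identity is the entire substance of the proof; without producing it (or an equivalent certificate in your variables) your argument does not establish the lemma. To complete your version you would need to either carry out the SDP computation and verify the resulting polynomial identity, or adopt the paper's substitution, which reduces the problem to an unconstrained positivity statement in four positive variables rather than a constrained one.
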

\begin{proof}
Abbreviating $J_5-J_2J_3$ as $J_*$, we can derive from the formula in Proposition~\ref{J2formula}:
\begin{equation}
\label{eqJstar}
q_5 = 3J_1J_2J_3J_4 + 2J_1J_4J_*-J_2J_3J_*-J_*^2-J_1J_2^3J_3-J_1J_2^2J_*-J_1^2J_4^2-J_3^2J_4\,.
\end{equation}
Setting $\alpha = J_1J_2J_3$, $\beta = J_1J_*$, $\gamma = J_1^2J_4$ and $\delta = J_3^2$ in (\ref{eqJstar}) gives:
\[
J_1^2J_3^2q_5 = 3\alpha\gamma\delta + 2\beta\gamma\delta-\alpha\beta\delta-\beta^2\delta-\alpha^3-\alpha^2\beta-\gamma^2\delta-\gamma \delta^2 := P_1(\alpha, \beta, \gamma, \delta)\,.
\]
It is not immediately apparent, $P_1\leq 0$ for positive values of $\alpha, \beta, \gamma$ and $\delta$. However, using semidefinite programming as outlined in Section~\ref{secpsatz}, we derive
\begin{eqnarray*}
-4(\alpha+\beta+\gamma+\delta)P_1 &=&(4\alpha\gamma+4\alpha\beta+4\beta\gamma+3\gamma\delta+3\beta^2)(\alpha-\delta)^2 + 4(\alpha\delta+\beta\delta)(\alpha+\beta-\gamma)^2\\ && \hspace{1cm} +\gamma\delta(3\alpha+2\beta-2\gamma-\delta)^2+(2\alpha^2+\beta\delta+\alpha\beta-2\gamma\delta)^2
\end{eqnarray*}
By the hypotheses, $\alpha, \beta, \gamma, \delta$ and $J_1^2J_3^2$ are all positive. Thus $P_1\leq 0$ and $q_5 = P_1/(J_1^2J_3^2) \leq 0$. 
\end{proof}

\begin{lemma}
\label{lem5pat3}
Consider a $5$-pattern $M$ such that $J_1J_2J_3>0$, $J_1J_5>0$, and $J_1(J_1J_4-J_2J_3)>0$. Then $\mathrm{det}^{[2]}\,M\leq 0$.
\end{lemma}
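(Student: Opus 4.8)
The plan is to reduce everything to the explicit quartic $q_5$ of Proposition~\ref{J2formula} and then to exploit a favourable regrouping of its terms. First I would normalise the sign of $J_1$. Since $n=5\equiv 1\pmod 4$, Proposition~\ref{propgen}(1) gives $\mathrm{det}^{[2]}(-M)=\mathrm{det}^{[2]}M$, and the substitution $M\mapsto -M$ sends $J_i\mapsto(-1)^iJ_i$, which leaves each of the three hypotheses $J_1J_2J_3>0$, $J_1J_5>0$, $J_1(J_1J_4-J_2J_3)>0$ unchanged. As $J_1J_2J_3>0$ forces $J_1\neq 0$, I may assume without loss of generality that $J_1>0$. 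The hypotheses then read $J_2J_3>0$, $J_5>0$ and $J_1J_4-J_2J_3>0$; the last of these, with $J_2J_3>0$, yields $J_1J_4>J_2J_3>0$ and hence $J_4>0$.

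Next I would rewrite $q_5$ in a form adapted to these quantities. A direct regrouping of the terms listed in the Remark after Proposition~\ref{J2formula} gives the identity
\begin{equation}
\label{eq5pat3id}
q_5=-(J_5-J_1J_4)^2+(J_1J_2-J_3)(J_3J_4-J_2J_5),
\end{equation}
which, writing $p:=J_2J_3$, expands to $q_5=-(J_5-J_1J_4)^2+p(J_5+J_1J_4)-J_1J_2^2J_5-J_3^2J_4$. Under the standing sign conditions the two subtracted monomials $J_1J_2^2J_5$ and $J_3^2J_4$ are both nonnegative, and crucially their product is the perfect monomial $J_1J_4J_5\,(J_2J_3)^2$, a product of quantities already known to be positive.

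Then I would apply the arithmetic--geometric mean inequality to those two terms,
\[
J_1J_2^2J_5+J_3^2J_4\ \geq\ 2\sqrt{J_1J_4J_5}\,\lvert J_2J_3\rvert\ =\ 2p\sqrt{J_1J_4J_5},
\]
using $p=J_2J_3>0$ and $J_1J_4J_5>0$. Setting $a:=\sqrt{J_5}$ and $b:=\sqrt{J_1J_4}$ (both real and positive), this bound turns \eqref{eq5pat3id} into
\[
q_5\ \leq\ -(a^2-b^2)^2+p(a^2+b^2)-2pab\ =\ (a-b)^2\bigl(p-(a+b)^2\bigr).
\]
Since $a,b>0$ we have $(a+b)^2>b^2=J_1J_4>p$, so the bracket is negative while $(a-b)^2\geq 0$; hence $q_5\leq 0$, that is, $\mathrm{det}^{[2]}M\leq 0$.

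The main obstacle is the middle step: the positive contribution $p(J_5+J_1J_4)$ is genuinely present and must be shown to be dominated by the negative square $-(J_5-J_1J_4)^2$ together with the two nonnegative monomials. The identity \eqref{eq5pat3id} is what makes this transparent, and the observation that the two subtracted monomials have a perfect-square product is exactly what lets AM--GM close the gap with no case analysis. I would expect locating the right regrouping \eqref{eq5pat3id} to be the only nonroutine part; should it prove elusive, the fallback (in the spirit of Lemma~\ref{lem5pat2}) is to clear denominators by a positive factor such as $J_1J_4J_5$ and hand the resulting polynomial inequality to a semidefinite-programming solver to extract an explicit sum-of-squares certificate.
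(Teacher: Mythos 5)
Your proof is correct, and it takes a genuinely different route from the paper's. The paper clears denominators (multiplying $q_5$ by $J_1^2J_3^2$), substitutes $\alpha=J_1J_2J_3$, $\beta=J_1(J_1J_4-J_2J_3)$, $\gamma=J_1J_5$, $\delta=J_3^2$, and then certifies nonpositivity of the resulting quartic $P_2(\alpha,\beta,\gamma,\delta)$ via an explicit sum-of-squares identity for $-4(\alpha+\beta+\gamma+\delta)P_2$, found by semidefinite programming. You instead verify the exact regrouping $q_5=-(J_5-J_1J_4)^2+(J_1J_2-J_3)(J_3J_4-J_2J_5)$ (which checks out against the displayed $q_5$, and is in fact a factored form of the rewriting used in Lemma~\ref{lem5pat1}), normalise to $J_1>0$ via the $J_i\mapsto(-1)^iJ_i$ symmetry (legitimate: $q_5$ and all three hypotheses are invariant, and the argument is pointwise anyway), and then close with AM--GM on $J_1J_2^2J_5+J_3^2J_4$ followed by the substitution $a=\sqrt{J_5}$, $b=\sqrt{J_1J_4}$, landing on $q_5\leq(a-b)^2\bigl(J_2J_3-(a+b)^2\bigr)\leq 0$ since $(a+b)^2>J_1J_4>J_2J_3$. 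All steps are valid and every hypothesis is used. What each approach buys: yours is elementary, human-readable, and needs no computer search, at the cost of leaving the polynomial ring (the square roots mean it is a real-analytic argument rather than a Positivstellensatz-style polynomial certificate); the paper's SOS certificate is less illuminating but is a purely polynomial identity produced by a systematic method that generalises to cases where no clever regrouping presents itself.
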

\begin{proof}
Abbreviating $J_1J_4-J_2J_3$ as $J_*$, we can derive from the formula in Proposition~\ref{J2formula}:
\begin{equation}
\label{eqJstar1}
J_1q_5 = 3J_1J_2J_3J_5 + 2J_1J_5J_*-J_1J_5^2-J_1^2J_2^2J_5-J_1J_2J_3J_*-J_1J_*^2-J_2J_3^3-J_3^2J_*\,.
\end{equation}
Setting $\alpha = J_1J_2J_3$, $\beta = J_1J_*$, $\gamma = J_1J_5$ and $\delta = J_3^2$ in (\ref{eqJstar1}) gives:
\[
J_1^2J_3^2q_5 = 3\alpha \gamma \delta + 2\beta\gamma\delta-\gamma^2\delta-\alpha^2\gamma-\alpha\beta\delta -\beta^2\delta-\alpha\delta^2-\beta\delta^2:= P_2(\alpha,\beta,\gamma,\delta)\,.
\]
$P_2\leq 0$ for positive values of $\alpha, \beta, \gamma$ and $\delta$. To verify this we may confirm that
\begin{eqnarray*}
-4(\alpha+\beta+\gamma+\delta)P_2 &=& (4\beta\gamma+4\alpha\gamma+3\beta\delta+3\gamma^2)(\alpha-\delta)^2+ 4\alpha\delta(\beta+\delta-\gamma)^2\\&&\hspace{-0.5cm}+4\gamma\delta(\alpha+\beta-\gamma)^2+\beta\delta(\alpha+2\beta+\delta-2\gamma)^2+(2\alpha\delta+2\beta\delta-\alpha\gamma-\gamma\delta)^2
\end{eqnarray*}
This formula was obtained using semidefinite programming as outlined in Section~\ref{secpsatz}. The hypotheses imply that $\alpha, \beta, \gamma, \delta$ and $J_1^2J_3^2$ are all positive. Thus $P_2 \leq 0$ and $q_5 = P_2/(J_1^2J_3^2) \leq 0$.
\end{proof}

\section{Further directions}

This paper begins the study of $\mathrm{det}^{[2]}$ for polynomial matrices, with a heavy emphasis on sign patterns. The formula presented in terms of minor-sums has proved useful, but its implications have only been superficially explored. Below are some questions/avenues which seem to be interesting. 

{\bf $\bm{[2]}$-indefiniteness and mixed vertices.} Recalling Remark~\ref{remverts}, consider the following statement about an $n$-pattern $M$:
\begin{quote}
(INDEF2) \hspace{0.5cm} $\mathrm{det}^{[2]}\,M \gtrless 0$ if and only if $\mathrm{det}^{[2]}\,M$ has mixed vertices.
\end{quote}
INDEF2 has been checked for $n \leq 4$ and for a large number of cases where $n=5$ and no counterexamples found. If INDEF2 is true then, supposing that we can calculate $\mathrm{det}^{[2]}\,M$, the problem of determining $[2]$-indefiniteness of $M$ reduces to a problem in convex geometry which can be solved with linear programming. It would be interesting to verify, or find a minimal counterexample to, INDEF2. 

{\bf Minor-sums and cycles.} Given a polynomial matrix $M$, there exist new variables other than minor-sums which can be useful when examining $\mathrm{det}^{[2]}\,M$. In particular, it can be helpful to take terms associated with cycles in some digraph as the variables. For example, given for a $5$-pattern $M$, $G_M$ has up to $89$ cycles, and so
%https://oeis.org/A002104
$\mathrm{det}^{[2]}\,M$ can be written as a polynomial in up to $89$ new variables each of which is a signed monomial associated with a cycle in $G_M$. The value of this approach is best seen in relatively sparse examples such as Examples~\ref{ex8pat}. It is also implicit in constructions such as those of Examples~\ref{exnz}~and~\ref{exmixed}, and in the proof of combinatorial results such as Proposition~\ref{prop44}. 

{\bf Obstructions in sign patterns.} The obstructions discussed in Section~\ref{secobstruct} seem worthy of further study. The examples in Section~\ref{secobstruct} illustrate how the determinantal formula in Proposition~\ref{J2formula} can be used to generate such obstructions. When examining a sign pattern $M$ it is natural to begin by asking the purely combinatorial question of whether $G_{\overline{M}}$ contains any known obstructions to $[2]$-nonnegativity or $[2]$-nonpositivity. 

{\bf CRNs.} Examples~\ref{ex4a}~and~\ref{ex5CRN} highlight that problems which involve polynomial matrices other than sign patterns arise in applications. These examples also hint at interesting possible extensions to the theory here. In particular, the study of CRNs quite generally gives rise to polynomial matrices with algebraic dependencies between their entries as a consequence of certain natural factorisations of these matrices. Examination of the factors in such factorisations can tell us about properties of $\mathrm{det}^{[2]}$ as discussed in \cite{abphopf}. These themes remain to be more fully explored.

\end{document}